\newcommand{\crosses}[1]{%
	\ifcase#1\relax
	\or
	\rslash\or
	\rslash\mskip-5.5mu\rslash\or
	\rslash\mskip-5.5mu\rslash\mskip-5.5mu\rslash%
	\fi
}
\newcommand{\rslash}{\raisebox{.15ex}{/}}
\numberwithin{equation}{section}
\theoremstyle{plain}
\newtheorem{lemma}{Lemma}[section]
\newtheorem{proposition}[lemma]{Proposition}
\newtheorem{proposition/definition}[lemma]{Proposition/Definition}
\newtheorem{theorem}[lemma]{Theorem}
\theoremstyle{definition}
\newtheorem{definition}[lemma]{Definition}
\newtheorem{remark}[lemma]{Remark}
\newtheorem{example}[lemma]{Example}
\DeclareRobustCommand{\SkipTocEntry}[5]{}
\renewcommand{\theta}{\vartheta}
\title{Fiber-wise Linear Differential Operators}
\def\author@andify{%
	\nxandlist {\unskip ,\penalty-1 \space\ignorespaces}%
	{\unskip {} \@@and~}%
	{\unskip \penalty-2 \space \@@and~}%
}
\author[F.~Pugliese]{Fabrizio Pugliese}
\address{DipMat, Universit\`a degli Studi di Salerno, via Giovanni Paolo II n${}^{\circ}$123, 84084 Fisciano (SA), Italy.}
\email{\href{mailto:fpugliese@unisa.it}{fpugliese@unisa.it}}
\author[G.~Sparano]{Giovanni Sparano}
\address{DipMat, Universit\`a degli Studi di Salerno, via Giovanni Paolo II n${}^{\circ}$123, 84084 Fisciano (SA), Italy.}
\email{\href{mailto:sparano@unisa.it}{sparano@unisa.it}}
\author[L. Vitagliano]{Luca Vitagliano}
\address{DipMat, Università degli Studi di Salerno, via Giovanni Paolo II n◦ 123, 84084 Fisciano (SA), Italy.}
\email{\href{mailto:lvitagliano@unisa.it}{lvitagliano@unisa.it}}
\keywords{Vector bundles, differential operators, Lie algebroids, multivectors}
\subjclass[2010]{58A99, 53B99, 53C99}
\begin{document}

\begin{abstract}
We define a new notion of \emph{fiber-wise linear differential operator} on the total space of a vector bundle $E$. Our main result is that fiber-wise linear differential operators on $E$ are equivalent to (polynomial) derivations of an appropriate line bundle over $E^\ast$. We believe this might represent a first step towards a definition of multiplicative (resp.~infinitesimally multiplicative) differential operators on a Lie groupoid (resp.~a Lie algebroid). We also discuss the \emph{linearization} of a differential operator around a submanifold. 
\end{abstract}

\maketitle

\tableofcontents

\section{Introduction}
Given a vector bundle $E \to M$, it is often interesting to look at geometric structures (functions, vector fields, differential forms, etc.) on the total space $E$ that satisfy appropriate compatibility conditions with the vector bundle structure. Such compatibility is often referred to as \emph{linearity} in the literature. Accordingly, one speaks about \emph{linear functions}, \emph{linear vector fields}, \emph{linear differential forms}, etc., on the total space of a vector bundle. In the present paper, we will rather use the terminology ``\emph{fiber-wise linearity}'', to avoid confusion with other types of linearities. Now, the vector bundle structure is completely determined by (the smooth structure on $E$) and the action $h : \mathbb R \times E \to E$ of the monoid $(\mathbb R, \cdot)$ of multiplicative reals by fiber-wise scalar multiplication, $h(t, e) = te$, for all $t \in \mathbb R$, and all $e \in E$ (see, e.g.~\cite{GR2009}). It follows that the fiber-wise linearity of a geometric structure can be usually expressed purely in terms of $h$. For instance, a function $f$ on $E$ is fiber-wise linear if $h_t^\ast f = tf$ for all $t$. Similarly a vector field $X$ (resp.~a differential form $\omega$) on $E$ is fiber-wise linear if $h_t^\ast X = X$ for all $t \neq 0$ (resp.~$h_t^\ast \omega = t\omega$ for all $t$). A fiber-wise linear function is equivalent to a section of the dual vector bundle $E^\ast$, a fiber-wise linear vector fields is a section of the \emph{gauge algebroid} of $E$ (see, e.g, \cite{mackenzie}) and a fiber-wise linear differential $1$-form is equivalent to a section of the first jet bundle $J^1 E \to M$. The latter examples already show that fiber-wise linear structures on $E$ can encode interesting geometric structures on (vector bundles over) $M$. There are even more interesting examples. A fiber-wise linear symplectic structure $\omega$ on $E$ is equivalent to a vector bundle isomorphism $E \cong T^\ast M$. More precisely, there exists a unique fiber-wise linear $1$-form $\theta$ on $E$ such that $\omega = d \theta$, and there exists a unique vector bundle isomorphism $E \cong T^\ast M$ that identifies $\theta$ with the tautological $1$-form on $T^\ast M$, hence $\omega$ with the canonical symplectic structure on $T^\ast M$ (see, e.g., \cite{GR2009}, see also \cite{R2002}). There are more examples: a fiber-wise linear metric is equivalent to an isomorphism $E \cong T^\ast M$ together with a torsion free connection in $TM$ \cite{V20XX}. As a final remarkable example we recall that a fiber-wise linear Poisson structure on $E$ is the same as a Lie algebroid structure on $E^\ast$ \cite{mackenzie}. 

In this paper we propose the following definition of \emph{fiber-wise linear scalar differential operator}. An $\mathbb R$-linear differential operator $\Delta : C^\infty (E) \to C^\infty (E)$ of order $q$ is \emph{fiber-wise linear} if $h_t^\ast \Delta = t^{1-q} \Delta$ for all $t \neq 0$. This definition might seem weird at a first glance. However, it is supported by several different facts. For instance, according to our definition, a function and a vector field are fiber-wise linear if and only if they are fiber-wise linear when regarded as a $0$-th order and a first order scalar differential operator, respectively. Moreover the principal symbol of a fiber-wise linear differential operator is a fiber-wise linear symmetric multivector. Another supporting remark is that the Laplacian (acting on functions) of a fiber-wise linear metric is a fiber-wise linear differential operator. Finally, a scalar differential operator $\Delta$ can be \emph{linearized} around a submanifold producing a fiber-wise linear differential operator representing a first order approximation to $\Delta$ in the transverse direction with respect to the submanifold. All these facts suggest that our definition might indeed be the ``correct one''. Our main result is a description of fiber-wise linear differential operators in terms of somehow simpler data. More precisely we prove the following theorem (see Theorem \ref{theor:iso_DO_D_sym} for a more precise statement).
\begin{theorem}\label{theor:0}
Let $E \to M$ be a vector bundle. Then there is a degree inverting $C^\infty (M)$-linear bijection between fiber-wise linear scalar differential operators $\Delta : C^\infty (E) \to C^\infty (E)$ and polynomial derivations of the line bundle $E^\ast \times_M \wedge^{\mathrm{top}} E \to E^\ast$.
\end{theorem}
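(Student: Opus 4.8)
The plan is to pass to fiber-wise polynomial functions and then dualize. Write $\bar{S}^\bullet := \bigoplus_p \Gamma(\Sym^p E^\ast)$ for the graded algebra of fiber-wise polynomial functions on $E$, and $S^\bullet := \bigoplus_p \Gamma(\Sym^p E)$ for that of fiber-wise polynomial functions on $E^\ast$. First I would record, in a local frame with base coordinates $x^i$ and linear fiber coordinates $y^a$, that a differential operator $\Delta=\sum_{\alpha,\beta}A^{\alpha\beta}\,\partial_x^\alpha\partial_y^\beta$ satisfies $h_t^\ast\Delta=t^{1-q}\Delta$ if and only if each coefficient $A^{\alpha\beta}$ is fiber-wise homogeneous of degree $|\beta|-q+1$; writing such a coefficient as a $C^\infty(M)$-combination of monomials $y^\gamma$ with $|\gamma|=|\beta|-q+1$, the requirement that no negative powers occur forces $|\beta|\ge q-1$, hence $|\alpha|\le 1$ and $|\gamma|\in\{0,1\}$, and moreover $|\alpha|=1$ excludes $|\gamma|=1$. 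I would emphasize these bounds, because they are exactly what will make the dual object a derivation rather than a higher-order operator. It also follows that $\Delta$ preserves $\bar{S}^\bullet$, acting there as a homogeneous operator of degree $1-q$, and, by locality (a differential operator is determined by its action on coordinate polynomials), that $\Delta$ is completely determined by this restriction.

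Next I would set up the duality. There is a perfect $C^\infty(M)$-bilinear pairing $\Gamma(\Sym^p E^\ast)\otimes\Gamma(\Sym^p E)\to C^\infty(M)$ in each degree, under which fiber-multiplication by $y^a$ is adjoint to the fiber-derivative $\partial_{\xi_a}$ and the fiber-derivative $\partial_{y^a}$ is adjoint to multiplication by the dual linear coordinate $\xi_a$ on $E^\ast$. Transposing the restriction of $\Delta$ across this pairing turns the $y$'s into $\partial_\xi$'s and the $\partial_y$'s into $\xi$'s, producing an $\mathbb{R}$-linear operator on $S^\bullet$ of degree $q-1$; by the bounds above it is first order in both the base and the fiber directions, with no mixed $\partial_x\partial_\xi$ term, so that its symbol is a vector field on $E^\ast$. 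The content of the theorem is that this dual operator is canonically a derivation of the line bundle $L := E^\ast\times_M\wedge^{\mathrm{top}}E$, and that every polynomial derivation of $L$ of degree $q-1$ arises this way. I would construct the map in both directions in a local frame and then prove frame-independence.

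The hard part will be precisely the frame-independence, and it is also where the factor $\wedge^{\mathrm{top}}E$ is forced. The transposes of the fiber operations $y^a,\partial_{y^a}$ are frame-independent on the nose, but the transpose of a base derivative $\partial_{x^i}$ is an integration by parts in the base directions, which produces a divergence term depending on the chosen trivialization; under a change of frame of $E$ this term changes by the logarithmic derivative of the transition function, i.e.\ by a quantity governed by $\wedge^{\mathrm{top}}E$. Twisting $S^\bullet$ by $\Gamma(\wedge^{\mathrm{top}}E)$ absorbs exactly this ambiguity: the would-be vector field together with its frame-dependent zeroth-order part assemble into a well-defined derivation of $L$, whose symbol is the vector field and whose action on a trivializing section encodes the divergence. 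I would carry out this cocycle computation carefully, as it is the crux of the global statement and the reason the target is a line-bundle derivation rather than a plain vector field.

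Finally I would check that the correspondence is a bijection with the stated formal properties. $C^\infty(M)$-linearity is immediate since the whole construction is $C^\infty(M)$-linear, and it is degree-inverting because $\Delta$ acts with homogeneity weight $1-q$ on $\bar{S}^\bullet$ while its dual acts in degree $-(1-q)=q-1$ on $\Gamma(\Sym^\bullet E\otimes\wedge^{\mathrm{top}}E)$. Injectivity follows from non-degeneracy of the pairing (a derivation whose associated operator kills all polynomials must vanish), and surjectivity follows by matching the local normal form found in the first step against the three pieces $P^i\in\Gamma(\Sym^{q-1}E)$, $Q_a\in\Gamma(\Sym^{q}E)$ and $c\in\Gamma(\Sym^{q-1}E)$ of a degree-$(q-1)$ derivation of $L$. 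This yields the precise correspondence stated in Theorem \ref{theor:iso_DO_D_sym}.
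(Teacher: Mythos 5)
Your overall architecture is sound and, at bottom, it is a de\nobreakdash-invariantized version of what the paper does: the homogeneity bookkeeping in your first step is correct and reproduces exactly the local normal form (\ref{eq:loc_DO_stab}), the dualization over the fiber-wise pairing is the coordinate shadow of the paper's passage from $\Delta$ to the pair $(\sigma(\Delta),\Psi_\Delta)$ with $\Psi_\Delta(\varphi_1,\ldots,\varphi_{q-1})=[\cdots[\Delta,\ell_{\varphi_1}],\cdots,\ell_{\varphi_{q-1}}](1)$, and you correctly identify $\wedge^{\mathrm{top}}E$ as the twist and dimension/normal-form matching as the source of bijectivity. But the decisive step is exactly the one you defer, and the heuristic you offer for it is not right as stated. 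First, there is no integration by parts anywhere in this problem: the pairing $\Gamma(S^pE^\ast)\otimes\Gamma(S^pE)\to C^\infty(M)$ is $C^\infty(M)$-valued, so a genuine ``divergence'' would require a density on $M$ and would drag in $\wedge^{\mathrm{top}}T^\ast M$, which does not appear in the statement. What you can do in a frame is move the base derivative across the pairing by the Leibniz rule, but the resulting ambiguity of \emph{that single term} under a change of frame of $E$ is the full $\mathfrak{gl}(E)$-valued quantity $T^{-1}\partial_{x^i}T$ acting as a derivation of $S^\bullet E$ --- not its trace. The reduction of the total frame-discrepancy to the scalar $\log\det$-cocycle happens only after you combine the non-tensorial transformation of all three blocks $\Delta^{i|A}$, $\Delta^B_\alpha$, $\Delta^C$ of (\ref{eq:loc_DO_stab}) (the reordering of $\partial_{x^i}$ past the frame produces cross-terms feeding the lower blocks). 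Asserting that the ambiguity ``is the logarithmic derivative of the transition function'' is therefore assuming the conclusion; the cocycle computation you postpone \emph{is} the theorem.

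For comparison, the paper avoids the frame-by-frame cancellation entirely by making the trace mechanism invariant: it packages $\Delta$ as the pair $(\sigma(\Delta),\Phi_\Delta)$ with
$\Phi_\Delta(\varphi_1,\ldots,\varphi_{q-1})(U)=\sigma(\Delta)(\varphi_1,\ldots,\varphi_{q-1},-)U+\Psi_\Delta(\varphi_1,\ldots,\varphi_{q-1})U$,
and the point of the proof of Theorem \ref{theor:iso_DO_D_sym} is that the failure of $C^\infty(M)$-linearity of the first summand is $-(\varphi_{q-1}\otimes e)U=+\langle\varphi_{q-1},e\rangle U$ --- a \emph{trace}, because a rank-one endomorphism acts on the top exterior power by its trace --- which cancels the first-order defect of $\Psi_\Delta$. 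That identity is the invariant reason the line bundle must be $\wedge^{\mathrm{top}}E$ and replaces your cocycle check; the resulting FWL $L_E$-multivector is then converted into a polynomial derivation of $E^\ast\times_M\wedge^{\mathrm{top}}E$ by the machinery of Section \ref{sec:derivations}. If you want to keep your coordinate route, you must actually carry out the multi-term transformation computation (or reorganize it around the trace identity above); as written, the proposal names the crux but does not close it.
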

This theorem is a little surprising because it describes objects of higher order in derivatives (fiber-wise linear differential operators) in terms of objects of order $1$ in derivatives (derivations of an appropriate vector bundle). We hope that this result might be the starting point of a more thorough investigation of \emph{multiplicative differential operators} on Lie groupoids and, at the infinitesimal level, \emph{infinitesimally multiplicative differential operators} on Lie algebroids. Multiplicative (resp.~infinitesimally multiplicative) structures are geometric structures on a Lie groupoid (resp.~Lie algebroid) which are additionally compatible with the groupoid (resp.~algebroid) structure. In the last thirty years, starting from the pioneering works of Weinstein on symplectic groupoids \cite{W1987}, multiplicative structures captured the interest of a large community of people working in Poisson geometry and related fields, and today we have a precise description of several different multiplicative structures and their infinitesimal counterparts: infinitesimally multiplicative structures (see \cite{KS2016} for a survey). However, all the examples investigated so far are of order $1$ in derivatives and it would be interesting to investigate the compatibility of a Lie groupoid/algebroid with structures of higher order in derivatives, e.g.~higher order differential operators. This is a natural issue that might conjecturally lead to new important developments. As infinitesimally multiplicative structures are, in particular, fiber-wise linear structure, this paper might be also considered as a first step in this direction.

The paper is organized as follows. In Section \ref{sec:vector_fields} we recall what does it mean for a vector field on the total space of a vector bundle $E \to M$ to be \emph{fiber-wise linear}, i.e.~compatible with the vector bundle structure. In Section \ref{sec:multivectors} we discuss \emph{fiber-wise linear symmetric multivectors} and we describe them in terms of simpler data. This material is well known to experts (although it is scattered in the literature and it is hard to find a universal reference) and the first two sections are mainly intended to fix our notation. In Section \ref{sec:derivations} we recall what a derivation of a vector bundle $E$ is and introduce what we call $E$-multivectors, a ``derivation analogue'' of plain multivectors. We also discuss fiber-wise linear $E$-multivectors. These objects are not exactly of our primary interest but they play a very useful role in the proofs of our main theorems (Theorems \ref{theor:iso_DO_D_sym} and \ref{theor:linear}). To the best of our knowledge the material in the third section is mostly new. Section \ref{sec:DO} is an extremely compact introduction to linear differential operators on vector bundles, and, in particular, scalar differential operators. Section \ref{sec:FWL_DO} contains our main constructions and results: we define and study fiber-wise linear (scalar) differential operators on the total space of a vector bundle $E$. Somehow surprisingly, fiber-wise linear differential operators on $E$ form a transitive Lie-Rinehart algebra over fiber-wise polynomial functions on $E^\ast$, with abelian isotropies (Theorem \ref{theor:stabilizer}). The reason is ultimately explained by our main result, Theorem \ref{theor:0} above (see also Theorem \ref{theor:iso_DO_D_sym} below). As already announced, $E$-multivectors play a prominent role in the proof. In Section \ref{sec:linear} we discuss the \emph{linearization} of a scalar differential operator $\Delta$ around a submanifold $M$ in a larger manifold. The linearization of $\Delta$ is a fiber-wise linear differential operator on the total space of the normal bundle to $M$, and can be seen as a first order approximation to $\Delta$ in the direction transverse to $M$. The existence of a \emph{linearization construction} strongly supports our definition of fiber-wise linear differential operators.

\section{Core and Linear Vector Fields on a Vector Bundle}\label{sec:vector_fields}

As we mentioned in the introduction, the main aim of the paper is to explain what does it mean for a differential operator on the total space $E$ of a vector bundle $E \to M$ to be compatible with the vector bundle structure. We will reach our definition (Definition \ref{def:FWL_DO}) by stages. We first need to recall what does it mean for a function, a vector field and, more generally, a multivector on $E$, to be compatible with the vector bundle structure. We do this in the present and the next section. We adopt the general philosophy of \cite{GR2009, GR2011} where it is shown that a vector bundle structure is encoded in the fiber-wise scalar multiplication, and compatibility with the vector bundle structure is expressed in terms of such multiplication.

So, let $\pi : E \to M$ be a vector bundle. The fiber-wise scalar multiplication by a real number
\[
h : \mathbb{R} \times E \to E
\]
is an action of the multiplicative monoid of reals $(\mathbb R, \cdot)$. The algebra $C^\infty_{\mathrm{poly}} (E)$ of fiber-wise polynomial functions on the total space $E$ is non-negatively graded:
\[
C^\infty_{\mathrm{poly}} (E) = \bigoplus_{k = 0}^\infty C^\infty (E)_k,
\]  
and its $k$-th homogeneous piece $C^\infty (E)_k$ consists of homogeneous polynomial functions of degree $k$, i.e.~functions $f \in C^\infty (E)$ such that
\[
h_t^\ast (f) = t^k f
\]
for all $t \in \mathbb R$. Functions in $C^\infty (E)_0$ are just (pull-backs via the projection $\pi : E \to M$ of) functions on $M$. We call them \emph{core functions} and also denote them by $C_{\mathrm{core}}^\infty (E)$. They form a subalgebra in $C^\infty_{\mathrm{poly}} (E)$. Functions in $C^\infty (E)_1$ are fiber-wise linear (FWL for short in what follows) functions, and identify naturally with sections of the dual vector bundle $E^\ast$. We denote them by $C_{\mathrm{lin}}^\infty (E)$. They form a $C^\infty_{\mathrm{core}}(E)$ submodule in $C^\infty_{\mathrm{poly}} (E)$. We denote by $\ell_\varphi$ the linear function corresponding to the section $\varphi \in \Gamma (E^\ast)$. The terminology ``core function'' (and similarly ``core vector field'', etc., see below) is motivated by the theory of double vector bundles, where ``core sections'' are sections with an appropriate degree with respect to certain actions of $(\mathbb R, \cdot)$ (see, e.g.~\cite{mackenzie}).

A vector field $X \in \mathfrak X (E)$ on $E$ is \emph{fiber-wise polynomial}, or simply \emph{polynomial}, if it maps (fiber-wise) polynomial functions to polynomial functions. Polynomial vector fields $\mathfrak X_{\mathrm{poly}} (E)$ form a (graded) \emph{Lie-Rinehart algebra} over polynomial functions $C^\infty_{\mathrm{poly}} (E)$: 
\[
\mathfrak X_{\mathrm{poly}} (E) = \bigoplus_{k = -1}^\infty \mathfrak X (E)_k.
\]
We recall for later purposes that a Lie-Rinehart algebra over a commutative algebra $A$ is a vector space $L$, which is both an $A$-module and a Lie algebra acting on $A$ by derivations with the following two compatibilities:
\begin{itemize}
\item the Lie algebra action map $\rho : L \to \operatorname{Der} A$ is $A$-linear (it is often called the \emph{anchor}), and
\item the Lie bracket $[-,-] : L \times L \to L$ is a bi-derivation, i.e.~it satisfies the following Leibniz rule:
\[
[\lambda, a \mu] = \rho (\lambda) \mu + a [\lambda, \mu],\quad \lambda, \mu \in L, \quad a \in A.
\]
\end{itemize}
Lie-Rinehart algebras are purely algebraic counterparts of Lie algebroids. For more on Lie-Rinehart algebras, see, e.g.~\cite{H2004} and references therein.

Coming back to polynomial vector fields, the $k$-th homogeneous piece $\mathfrak X (E)_k$ of $\mathfrak X_{\mathrm{poly}} (E)$ consists of homogeneous ``polynomial'' vector fields of degree $k$, i.e.~vector fields $f \in \mathfrak X (E)$ such that
\[
h_t^\ast (X) = t^k X
\]
for all $t \neq 0$. Vector fields in $\mathfrak X (E)_{-1}$ are vertical lifts of sections of $E$. We call them \emph{core vector fields} and also denote them by $\mathfrak X_{\mathrm{core}} (E)$. They form an abelian Lie-Rinehart subalgebra (beware over the subalgebra $C^\infty_{\mathrm{core}} (E) = C^\infty (M)$) in $\mathfrak X_{\mathrm{poly}} (E)$. We denote by $e^\uparrow$ the vertical lift of a section $e \in \Gamma(E)$.

Vector fields in $\mathfrak X (E)_0$ are, by definition, fiber-wise linear (FWL) vector fields. They can be equivalently characterized as vector fields preserving linear functions and they satisfy the following property
\[
[X, Y] \in \mathfrak X_{\mathrm{core}} (E), \quad \text{for all $Y \in \mathfrak X_{\mathrm{core}} (E)$}.
\]
We denote FWL vector fields by $\mathfrak X_{\mathrm{lin}} (E)$. They form a Lie-Rinehart subalgebra (over $C^\infty_{\mathrm{core}}(E)$) in $\mathfrak X_{\mathrm{poly}} (E)$.

If $(x^i, u^\alpha)$ are vector bundle coordinates, then a function $f \in C^\infty (E)$ is a core function if and only if $f = f(x)$ and it is a linear functions if and only if, locally, $f = f_\alpha (x) u^\alpha$. Similarly, a vector field $X \in \mathfrak X (E)$ is a core vector field if and only if, locally, 
\[
X = X^\alpha (x) \frac{\partial}{\partial u^\alpha}
\]
 and it is a linear vector field if and only if, locally,
 \[
 X = X^i (x) \frac{\partial}{\partial x^i} + X^\alpha_\beta (x) u^\beta \frac{\partial}{\partial u^\alpha}.
 \]
 
 \begin{remark}\label{rem:linear_tensor}
 There is also a useful notion of \emph{FWL tensor} on $E$. Let $\mathcal T \in \Gamma (T^{\otimes r} E \otimes T^\ast{}^{\otimes s}E)$ be a tensor field of type $(r,s)$. Then $\mathcal T$ is a \emph{FWL tensor} if
 \[
 h_t^\ast \mathcal T = t^{1-r} \mathcal T
 \] 
 for all $t \neq 0$. Notice that FWL tensors are called \emph{linear tensor fields} in \cite{BD2019}, where they are characterized in terms of the fiber-wise addition in $E$ (rather than via the fiber-wise multiplication $h$ as we do). As an instance, consider a metric $g \in \Gamma (S^2 T^\ast E)$. It is linear if $h_t^\ast g = t g$ for all $t$, and it is easy to see that this is in turn equivalent to $g$ being locally of the form
 \[
 g =  g_{\alpha i} (x) du^\alpha \odot dx^i + g_{\alpha|ij} (x) u^\alpha dx^i \odot dx^j.
 \]
 Notice that the non-degeneracy condition then implies that the $x$-dependent matrix $\left( g_{\alpha i}\right)$ is invertible. In particular, the dimension of $M$ and the rank of $E$ must agree. Even more, denoting by $T^\pi E = \ker d\pi$ the $\pi$-vertical bundle, the composition
 \begin{equation}\label{eq:comp}
E \overset{\cong}{\to} {T^\pi E|_M} \hookrightarrow T E|_M \overset{\flat}{\to} T^\ast E|_M \to T^\ast M
 \end{equation}
 is a vector bundle isomorphism. Here $\flat : TE \to T^\ast E$ is the musical isomorphism, the second and the fourth arrow are those induced by the canonical direct sum decomposition, $TE|_M = TM \oplus T^\pi E|_M$, and the first arrow is the canonical isomorphism. In other words, a non-degenerate symmetric covariant $2$-tensor $g$ can only exist on the total space of (a vector bundle isomorphic to) the cotangent bundle. Finally, in standard coordinates $(x^i, p_i)$ on $T^\ast M$, $g$ looks like
 \begin{equation}\label{eq:g_linear}
 g = dp_i \odot dx^i  - \Gamma_{ij}^k(x)p_k dx^i \odot dx^j,
 \end{equation}
for some appropriate local functions $\Gamma_{ij}^k (x)$. In particular, $g$ is necessarily of split signature. For more on FWL metrics see \cite{V20XX}.

 \end{remark}
 
 \section{More on FWL Multivector Fields}\label{sec:multivectors}
 
 The material in this section is well-known to experts, and it is partly folklore, partly scattered in the literature. For this reason it is hard to give precise references (the reader may consult, e.g., \cite[Appendix A]{LV2019} and references therein, although that reference does not cover the same exact material as the following one). In any case, most of the proofs are straightforward and we omit them.
 
We will need to consider FWL symmetric multivectors. According to Remark \ref{rem:linear_tensor}, a $k$-multivector $P$ on the total space $E$ of a vector bundle $E \to M$ is \emph{FWL} if
 \[
 h_t^\ast (P) = t^{1-k} P
 \]
 for all $t \neq 0$. We denote by $\mathfrak X^\bullet_{\mathrm{sym}, \mathrm{lin}}(E)$ 
 FWL symmetric 
 multivectors.
 
 There is a useful characterization of a FWL $k$-multivectors. Namely, a $k$-multivector $P$ on $E$ is FWL if and only if
\begin{enumerate}
\item $P (f_1, \ldots, f_{k}) \in C^\infty_{\mathrm{lin}}(E)$,
\item $P (f_1, \ldots, f_{k-1}, h_1) \in C^\infty_{\mathrm{core}}(E)$,
\item $P(f_1, \ldots, f_{k-2}, h_1, h_2) = 0$,
\end{enumerate}
for all $f_i \in C^\infty_{\mathrm{lin}}(E)$, and all $h_j \in C^\infty_{\mathrm{core}}(E)$. In particular, a FWL symmetric $k$-multivector $P$ determines a pair of maps $(D_P, l_P)$:
\[
D_P : \underset{\text{$k$ times}}{\underbrace{\Gamma (E^\ast) \times \cdots \times \Gamma (E^\ast)}} \to \Gamma (E^\ast)
\]
and
\[
l_P : \underset{\text{$k-1$ times}}{\underbrace{\Gamma (E^\ast) \times \cdots \times \Gamma (E^\ast)}} \times C^\infty (M) \to C^\infty (M)
\]
via 
\[
\begin{aligned}
\ell_{D_P (\varphi_1, \ldots, \varphi_k)} & = P (\ell_{\varphi_1}, \ldots, \ell_{\varphi_k})\\
l_P (\varphi_1, \ldots, \varphi_{k-1}, f) & = P (\ell_{\varphi_1}, \ldots, \ell_{\varphi_k}, f)
\end{aligned}
\]
for all $\varphi_i \in \Gamma (E^\ast)$, and all $f \in C^\infty (M)$. The maps $D_P, l_P$ satisfy the following properties:
\begin{enumerate}
\item $D_P$ is $\mathbb R$-multilinear and symmetric,
\item $l_P$ is $C^\infty (M)$-multilinear and symmetric in the first $(k-1)$-arguments,
\item $D_P (\varphi_1, \ldots, \varphi_{k-1}, f \varphi_k) = f D_P (\varphi_1, \ldots,\varphi_k) + l_P (\varphi_1, \ldots, \varphi_{k-1}, f) \varphi_k$, for $\varphi_i \in \Gamma (E^\ast)$, and $f \in C^\infty (M)$,
\item $l_P$ is a derivation in its last argument.
\end{enumerate}
In particular, $l_P$ can be seen as a vector bundle map $l_P : S^{k-1}E^\ast \to TM$, and we will often write 
\[
l_P (\varphi_1, \ldots, \varphi_{k-1}) (f)
\]
instead of $l_P (\varphi_1, \ldots, \varphi_{k-1}, f)$. The assignment $P \mapsto (D_P, l_P)$ establishes a $C^\infty (M)$-linear bijection between FWL symmetric $k$-multivectors on $E$ and \emph{$k$-multiderivations} of $E^\ast$, i.e.~pairs $(D,l)$ consisting of a map $D : \Gamma (E^\ast) \times \cdots \times \Gamma (E^\ast) \to \Gamma (E^\ast)$ and a vector bundle map $l : S^{k-1} E^\ast \to TM$ (equivalently a section of $S^{k-1}E \otimes TM$) satisfying 
\[
D (\varphi_1, \ldots, \varphi_{k-1}, f \varphi_k) = f D (\varphi_1, \ldots,\varphi_k) + l (\varphi_1, \ldots, \varphi_{k-1}) (f) \varphi_k.
\]
The map $l$ is sometimes called the \emph{symbol} of $D$ and it is completely determined by $D$. For this reason, we will often refer to $D$ itself as a $k$-multiderivation (see, e.g., \cite{GG2003, CM2008} for a skew-symmetric version of multiderivations). 

Recall that there is a natural Poisson bracket $\{-,-\}$ on symmetric multivectors given by the following (Gerstenhaber-type) formula
\begin{equation}\label{eq:Poisson_multiv}
\begin{aligned}
& \{P_1, P_2\} (f_1, \ldots, f_{k_1 + k_2 +1}) \\
& = \sum_{\sigma \in S_{k_2+1, k_1}}P_1 \left( P_2 \left(f_{\sigma(1)}, \ldots, f_{\sigma(k_2+1)}\right), f_{\sigma(k_2+2)}, \ldots, f_{\sigma(k_1 + k_2 +1)}\right) \\
& \quad - \sum_{\sigma \in S_{k_1+1, k_2}}P_2 \left( P_1 \left(f_{\sigma(1)}, \ldots, f_{\sigma(k_1+1)}\right), f_{\sigma(k_1+2)}, \ldots, f_{\sigma(k_1 + k_2 +1)}\right)
\end{aligned}
\end{equation}
for all $(k_1+1)$-multivectors $P_1$, $(k_2+1)$-multivectors $P_2$, and all functions $f_i$, where $S_{k, h}$ denotes $(k, h)$-unshuffles. The Poisson bracket (\ref{eq:Poisson_multiv}) preserves FWL symmetric multivectors and the Poisson bracket $\{P_1, P_2\}$ of the FWL Poisson multivectors $P_1, P_2 \in \mathfrak X^{\bullet}_{\mathrm{sym}, \mathrm{lin}}(E)$ identifies with the obvious \emph{Gerstenhaber-like} bracket $\{ D_1, D_2\}$ of the associated multiderivations $D_1,D_2$.

FWL symmetric multivectors on $E$ do also identify with polynomial vector fields on $E^\ast$. To see this, it is useful to talk about \emph{core multivectors} first. A $k$-multivector $P$ on $E$ is \emph{core} if
\[
h_t^\ast (P) = t^{-k} P
\] 
for all $t \neq 0$. Core $k$-multivectors can be characterized as those multivectors $P$ such that 
\begin{enumerate}
\item $P (f_1, \ldots, f_{k}) \in C^\infty_{\mathrm{core}}(E) = C^\infty (M)$,
\item $P (f_1, \ldots, f_{k}, h) = 0$,
\end{enumerate}
for all $f_i \in C^\infty_{\mathrm{lin}} (E)$ and $h \in C^\infty_{\mathrm{core}} (E)$, and they form a subalgebra $\mathfrak X^\bullet_{\mathrm{sym}, \mathrm{core}}(E)$ in the associative, commutative algebra $\mathfrak X^\bullet_{\mathrm{sym}}(E)$ (with the symmetric product). More precisely, $\mathfrak X^\bullet_{\mathrm{sym}, \mathrm{core}}(E)$ is the subalgebra spanned by core functions and core vector fields. In particular, $\mathfrak X^\bullet_{\mathrm{sym}, \mathrm{core}}(E)$ identifies with sections $\Gamma (S^\bullet E)$ of the symmetric algebra of $E$ via
\[
(e_1)^\uparrow \odot \cdots \odot (e_k)^\uparrow \mapsto e_1 \odot \cdots \odot e_k.
\]
$e_i \in \Gamma (E)$. In its turn, $\Gamma (S^\bullet E)$ identifies with polynomial functions on $E^\ast$ in the via the (degree preserving) algebra isomorphism
\[
\Gamma (S^\bullet E) \to C^\infty_{\mathrm{poly}} (E^\ast), \quad e_1 \odot \cdots \odot e_q \mapsto \ell_{e_1} \cdots \ell_{e_q},
\]
and, in what follows, we will often understand the latter identifications. Notice that the resulting isomorphism
\[
\mathfrak X^\bullet_{\mathrm{sym}, \mathrm{core}}(E) \to C^\infty_{\mathrm{poly}}(E^\ast), \quad P \mapsto F_P
\]
is given by
\[
F_P (\varphi_x) = \frac{1}{k!}P (\ell_\varphi, \ldots, \ell_\varphi)(x)
\]
$P \in \mathfrak X^k_{\mathrm{sym}, \mathrm{core}}(E)$, $\varphi \in \Gamma (E^\ast)$ and $x \in M$.

We can now go back to FWL symmetric multivectors. The symmetric product of a core symmetric multivector and a FWL one is a FWL multivector, and this turns $\mathfrak X^\bullet_{\mathrm{sym},\mathrm{lin}}(E)$ into a $\Gamma (S^\bullet E)$-module. Now, let $P \in \mathfrak X^\bullet_{\mathrm{sym}, \mathrm{lin}}(E)$. It is easy to see that the Poisson bracket
$H_P = \{P, - \}$ preserves core multivectors. Hence, it is a derivation of the commutative algebra $\mathfrak X^\bullet_{\mathrm{sym}, \mathrm{core}}(E) \cong C^\infty_{\mathrm{poly}} (E^\ast)$. In its turn, $H_P$ extends uniquely to a polynomial vector field, also denoted $H_P$, on $E^\ast$. The assignment $P \mapsto H_P$ establishes a degree inverting isomorphism of Lie algebras, between the Lie algebra of linear symmetric multivectors on $E$ (with the Poisson bracket) and polynomial vector fields on $E^\ast$ (with the commutator). When we equip $\mathfrak X^\bullet_{\mathrm{sym}, \mathrm{lin}}(E)$ with the symmetric product by a core multivector, the latter isomorphism becomes an isomorphism of Lie-Rinehart algebras.

Finally, we remark that linear symmetric multivectors fit in the following short exact sequence
\begin{equation}\label{eq:ses_X_sym_lin}
0 \longrightarrow \Gamma (S^\bullet E \otimes E^\ast) \longrightarrow \mathfrak X^\bullet_{\mathrm{sym}, \mathrm{lin}} (E) \overset{l}{\longrightarrow} \Gamma (S^{\bullet-1} E \otimes TM)\longrightarrow 0
\end{equation}
where the second arrow identifies the section $e_1 \odot \cdots \odot e_k \otimes \varphi$ of $S^k E \otimes E^\ast$ with the FWL $k$-multivector field
$
\ell_{\varphi} e_1^\uparrow \odot \cdots \odot e_k^\uparrow.
$

\section{More on Derivations of a Vector Bundle}\label{sec:derivations}

In this section, for a vector bundle $V \to M$, we introduce a notion of (symmetric) \emph{$V$-multivector} (Definition \ref{def:V-multiv}). To the best of our knowledge this notion is new. It will play a significant role in the description of fiber-wise linear differential operators provided in Section \ref{sec:FWL_DO}. Symmetric $V$-multivectors are in many respect similar to plain symmetric multivectors, so the proofs of most of the statements in this section parallel the proofs of the analogous statements for multivectors and we omit them.

We begin with a vector bundle $E \to M$ and remark that the space $\mathfrak X^1_{\mathrm{lin}}(E) = \mathfrak X_{\mathrm{lin}}(E)$ of linear vector fields is of particular interest. The assignment $X \mapsto D_X$ establishes an isomorphism of Lie-Rinehart algebras (over $C^\infty (M)$) between linear vector fields on $E$ and \emph{derivations} of $E^\ast$, i.e.~$1$-multiderivations. We stress that
\[
X (\ell_\varphi) = \ell_{D_X \varphi}, \quad \varphi \in \Gamma (E^\ast).
\]

In the following, we denote by $\mathfrak D (V)$ the Lie-Rinehart algebra of derivations of a vector bundle $V$. It is the Lie-Rinehart algebra of sections of a Lie algebroid $DV \to M$ whose Lie bracket is the commutator of derivations and whose anchor is the symbol map $D \mapsto l_D$. 

Notice also that the assignment $X \mapsto H_X$ (see the last paragraph of the previous section) does also establish an isomorphism of Lie-Rinehart algebras (over $C^\infty (M)$) between linear vector fields on $E$ and linear vector fields on $E^\ast$.
Accordingly we have a canonical Lie algebroid isomorphism $DE \mapsto DE^\ast$, $D \mapsto D^\ast$ which is explicitly given by
\[
\langle D^\ast \varphi, e \rangle = l_D \left(\langle \varphi, e \rangle \right) - \langle \varphi, D e \rangle,
\]
for every $\varphi \in \Gamma (E^\ast)$, $e \in\Gamma (E)$, where $\langle -,-\rangle : E^\ast \otimes E \to \mathbb R_M := M \times \mathbb R$ is the duality pairing. In the following we will simply denote by $D$ the derivation of $E^\ast$ induced by a derivation of $E$ (and vice-versa). It is easy to see that
\[
[X, e^\uparrow] = (D_X e)^\uparrow, \quad e \in \Gamma (E).
\]

More generally, a derivation $D$ of a vector bundle $V$ induces a derivation, also denoted $D$, in each component of the whole (symmetric, resp.~alternating) tensor algebra of $V \oplus V^\ast$. The latter derivation is defined imposing the obvious Leibniz rule with respect to the tensor product and the contraction by an element in the dual.

We are now ready to define the algebra of \emph{$V$-multivectors}, which is a ``derivation analogue'' of the Poisson algebra
 of symmetric multivectors. So, let $V \to M$ be a vector
  bundle, and consider the graded space $\tilde{\mathfrak D}{}^{\bullet} := \mathfrak
   X_{\mathrm{sym}}^{\bullet-1} (M) \otimes \mathfrak D (V)$, where the tensor 
   product is over functions on $M$. Consider the graded subspace $\mathfrak
    D^{\bullet}_{\mathrm{sym}} (V) \subset \tilde{\mathfrak D}{}^\bullet$ consisting of
     elements projecting on symmetric multivectors $\mathfrak X_{\mathrm{sym}}
     ^{\bullet} (M) \hookrightarrow \mathfrak X_{\mathrm{sym}}^{\bullet -1} (M) \otimes
      \mathfrak X (M)$ via
\[
\mathrm{id} \otimes l : \tilde{\mathfrak D}{}^\bullet \to \mathfrak X_{\mathrm{sym}}^\bullet (M) \otimes \mathfrak X (M).
\]
We denote by
\[
L : \mathfrak D_{\mathrm{sym}}^{\bullet}(V) \to \mathfrak X_{\mathrm{sym}}^\bullet (M)
\]
the projection. Notice that $\mathfrak D_{\mathrm{sym}}^{\bullet}(V)$ fits in an exact sequence:
\begin{equation}\label{eq:ses_V-derivations}
0 \longrightarrow \mathfrak X^{\bullet - 1}_{\mathrm{sym}} (M) \otimes \Gamma (\operatorname{End} V) \longrightarrow \mathfrak D_{\mathrm{sym}}^{\bullet}(V )\overset{L}{\longrightarrow} \mathfrak X^{\bullet }_{\mathrm{sym}} (M) \longrightarrow 0.
\end{equation}
\begin{definition}\label{def:V-multiv}
Elements in $\mathfrak D_{\mathrm{sym}}^{k}(V)$ are \emph{symmetric $k$-$V$-multivectors}.
\end{definition} 
A symmetric $k$-$V$-multivector $D \in \mathfrak D_{\mathrm{sym}}^{k}(V)$ will be often interpreted as an operator
\[
D : C^\infty (M) \times \cdots C^\infty (M) \times \Gamma (V) \to \Gamma (V), \quad (f_1, \ldots, f_{k-1}, v) \mapsto D(f_1, \ldots, f_{k-1}|v).
\]

%

\begin{lemma}\label{lem:Poisson_D}
The space $\mathfrak D^{\bullet}_{\mathrm{sym}}(V)$ of symmetric $V$-multivectors is a Poisson algebra when equipped with 
\begin{enumerate}
\item the associative product given by
\[
\begin{aligned}
& D_1 \cdot D_2 (f_1, \ldots, f_{k_1+ k_2 + 1}|v) \\
& = \sum_{\sigma \in S_{k_1 + 1, k_2}}L_{D_1}(f_{\sigma(1)}, \ldots, f_{\sigma(k_1 + 1)}) D_2(f_{\sigma(k_1+2)}, \ldots, f_{\sigma(k_1 + k_2 +1)}|v) \\
& \quad + 
\sum_{\sigma \in S_{k_2 + 1, k_1 }}L_{D_2}(f_{\sigma(1)}, \ldots, f_{\sigma(k_2 + 1)}) D_1(f_{\sigma(k_2+2)}, \ldots, f_{\sigma(k_1 + k_2 +1)}|v)
\end{aligned}
\]
for all $f_i \in C^\infty (M)$, $v \in \Gamma (V)$; and
\item the Lie bracket $\{-,-\}$ given by
\begin{equation}\label{eq:Poisson_V-multivectors}
\{D_1, D_2\} = D_1 \bullet D_2 -  D_2 \bullet D_1 
\end{equation}
where
\[
D_1 \bullet D_2 : C^\infty (M) \times \cdots \times C^\infty (M) \times \Gamma (V) \to \Gamma(V)
\]
is the operator given by
\[
\begin{aligned}
& D_1 \bullet D_2 \left( f_1, \ldots, f_{k_1 + k_2} |v  \right) \\
& =
\sum_{\sigma \in S_{k_1,k_2}} D_1 \left(f_{\sigma(1)}, \ldots, f_{\sigma(k_1)} | D_2 (f_{\sigma (k_1 + 1)}, \ldots, f_{\sigma (k_1 + k_2)}| v) \right) \\
& \quad + \sum_{\sigma \in S_{k_1-1,k_2 + 1}}D_1 \left(f_{\sigma(1)}, \ldots, f_{\sigma(k_1 - 1)}, L_{D_2} (f_{\sigma (k_1)}, \ldots, f_{\sigma (k_1 + k_2)})|v \right)
\end{aligned}
\]
for all $f_i \in C^\infty (M)$, and $v \in \Gamma (V)$.\end{enumerate}
Here $D_1 \in \mathfrak D_{\mathrm{sym}}^{k_1 + 1}(V)$ and $D_2 \in \mathfrak D_{\mathrm{sym}}^{k_2 + 1}(V)$, $D_1 \cdot D_2 \in \mathfrak D_{\mathrm{sym}}^{k_1 + k_2 + 2}(V)$, and $\{D_1,  D_2\} \in \mathfrak D_{\mathrm{sym}}^{k_1 + k_2 + 1}(V)$.

The map
\[
L : \mathfrak D_{\mathrm{sym}}^{\bullet}(V) \to \mathfrak X_{\mathrm{sym}}^\bullet (M)
\]
is a surjective Poisson algebra map.
\end{lemma}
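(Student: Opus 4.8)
The plan is to realize $\mathfrak D^\bullet_{\mathrm{sym}}(V)$ as a quotient of the canonical Poisson algebra attached to the Lie algebroid $DV$, so that the Poisson axioms become automatic and only a compatibility of formulas remains to be checked. Recall that for any Lie algebroid $A \to M$ the symmetric algebra $\Gamma(S^\bullet A)$ carries a canonical Poisson structure: the symmetric product together with the unique biderivation bracket extending the Lie bracket of $\Gamma(A)$ and the anchor action on $C^\infty(M)$ (equivalently, the fibre-wise linear Poisson structure on $A^\ast$); the multivector Poisson algebra $\mathfrak X^\bullet_{\mathrm{sym}}(M) = \Gamma(S^\bullet TM)$ with the bracket \eqref{eq:Poisson_multiv} is the special case $A = TM$. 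I would apply this to $A = DV$. There is a natural identification $\mathfrak D^\bullet_{\mathrm{sym}}(V) \cong \Gamma(S^\bullet(DV))/I$, where $I = \langle \Gamma(S^2 \operatorname{End} V)\rangle$ is the ideal generated by symmetric products of two endomorphisms: applying the symbol $l$ to all but one tensor factor gives a map $S^\bullet(DV) \to \mathfrak X^{\bullet-1}_{\mathrm{sym}}(M)\otimes \mathfrak D(V)$ whose kernel is exactly $I$ and whose image is the symmetric-symbol subspace, i.e.\ $\mathfrak D^\bullet_{\mathrm{sym}}(V)$, compatibly with the exact sequence \eqref{eq:ses_V-derivations}.

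Granting this identification, the Poisson property of $\mathfrak D^\bullet_{\mathrm{sym}}(V)$ follows once I show that $I$ is a Poisson ideal of $\Gamma(S^\bullet(DV))$, since a quotient of a Poisson algebra by a Poisson ideal is again Poisson. Being a product ideal by definition, I only need $\{\Gamma(S^\bullet(DV)), I\} \subseteq I$, and by the biderivation property it suffices to test the Poisson generators $f \in C^\infty(M)$ and $\delta \in \mathfrak D(V) = \Gamma(DV)$ against a generator $A \odot B$ with $A, B \in \Gamma(\operatorname{End} V)$. For $f$ this is immediate because $l_A = l_B = 0$, so $\{f, A\} = \{f, B\} = 0$. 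For $\delta$ the Leibniz rule gives $\{\delta, A \odot B\} = [\delta, A] \odot B + A \odot [\delta, B]$, and the key point is that the bracket in $DV$ of a derivation and an endomorphism is again an endomorphism (a one-line check from the Leibniz rule for derivations shows $[\delta, A]$ is $C^\infty(M)$-linear), so both terms lie in $\Gamma(S^2 \operatorname{End} V) \subseteq I$. Hence $I$ is a Poisson ideal.

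The statement about $L$ I would obtain from the same circle of ideas applied to the larger ideal $\langle \Gamma(\operatorname{End} V)\rangle \supseteq I$. The computation just performed shows verbatim that $\langle \Gamma(\operatorname{End} V)\rangle$ is also a Poisson ideal: it is closed under the product, and bracketing a generator with $f$ or $\delta$ keeps an $\operatorname{End} V$ factor. The corresponding quotient is $\Gamma(S^\bullet(DV))/\langle \operatorname{End} V\rangle \cong \Gamma(S^\bullet TM) = \mathfrak X^\bullet_{\mathrm{sym}}(M)$, since $DV/\operatorname{End} V = TM$ as Lie algebroids via the symbol. Under the identification of the first paragraph this quotient projection is precisely $L$; being the projection onto a quotient Poisson algebra, it is automatically a surjective homomorphism for both the product and the bracket, which is the final assertion.

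The main obstacle is the identification in the first paragraph at the level of the explicit operators: one must check that the abstract symmetric product and Schouten bracket on $\Gamma(S^\bullet(DV))/I$, transported to operators $D(f_1, \ldots, f_{k-1}|v)$, reproduce the formulas in the statement (in particular that the term involving $L_{D_2}$ in $D_1 \bullet D_2$ is the anchor contribution of the Schouten bracket). I expect this to reduce to a bookkeeping check on generators of degrees $0$ and $1$ --- where it amounts to $\{f, g\} = 0$, $\{\delta, f\} = l_\delta(f)$ and $\{\delta_1, \delta_2\} = [\delta_1, \delta_2]$, the defining Lie-Rinehart data of $\mathfrak D(V)$ over $C^\infty(M)$ --- together with the biderivation property to propagate to all degrees, modulo the usual combinatorial factors coming from polarization. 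An alternative, more pedestrian route avoids $\Gamma(S^\bullet(DV))$ entirely and verifies associativity, graded commutativity, graded antisymmetry and the graded Jacobi identity directly from the formulas; there the crux would be the Jacobi identity, which follows from the fact that $D_1 \bullet D_2$ is a graded right pre-Lie product whose associator is suitably symmetric, exactly paralleling the proof for the multivector bracket \eqref{eq:Poisson_multiv}.
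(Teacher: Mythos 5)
Your proposal is correct in outline, and it takes a genuinely different route from the paper: the paper's proof is a direct verification of the Poisson axioms from the stated formulas (``a long and tedious computation that we omit''), whereas you derive them structurally by presenting $\mathfrak D^\bullet_{\mathrm{sym}}(V)$ as $\Gamma(S^\bullet(DV))/I$ and invoking the canonical Poisson structure of $\Gamma(S^\bullet A)$ for the Lie algebroid $A=DV$. Your two key structural claims check out: working locally with a splitting $DV\cong TM\oplus\operatorname{End}V$ of the symbol sequence, the map ``apply $l$ to all but one factor'' kills exactly the summands $S^pTM\otimes S^q\operatorname{End}V$ with $q\ge 2$ and has image precisely the preimage of $S^kTM$ under $\mathrm{id}\otimes l$, which is the definition of $\mathfrak D^k_{\mathrm{sym}}(V)$ and is compatible with the sequence (\ref{eq:ses_V-derivations}); and $I$ is indeed a Poisson ideal because $\{f,A\}=-l_A(f)=0$ and $[\delta,A]\in\Gamma(\operatorname{End}V)$ (zero symbol), so brackets can never destroy more than one $\operatorname{End}V$ factor. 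The same argument for the larger ideal $\langle\Gamma(\operatorname{End}V)\rangle$ gives the statement about $L$ via $DV/\operatorname{End}V\cong TM$. What your approach buys is that associativity, the Leibniz rule and the Jacobi identity come for free from the quotient construction; what it costs is the deferred ``bookkeeping check'' that the transported product and bracket coincide with the explicit unshuffle formulas in the statement (including the normalization of the embedding $\mathfrak X^\bullet_{\mathrm{sym}}(M)\hookrightarrow\mathfrak X^{\bullet-1}_{\mathrm{sym}}(M)\otimes\mathfrak X(M)$). That check is the one computation you still owe, and it plays the role of the paper's omitted verification; but it is a strictly easier task --- matching two bilinear operations on decomposable monomials $f\,\delta_1\odot\cdots\odot\delta_k$, where the term involving $L_{D_2}$ in $D_1\bullet D_2$ is visibly the contribution of the commutators $[\delta_i,\epsilon_j]$ acting through their symbols on a function slot --- than verifying the Jacobi identity from scratch. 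I would recommend writing out that identification explicitly for the product and for $\bullet$ on monomials to nail down the combinatorial factors; everything else in your argument is complete.
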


\begin{proof}
A long and tedious computation that we omit.
\end{proof}

\begin{remark}
When $V$ is a line bundle, the map $\mathfrak X^{\bullet - 1}_{\mathrm{sym}} (M) \otimes \Gamma (\operatorname{End} V) \to \mathfrak D_{\mathrm{sym}}^{\bullet}(V )$ embeds $\mathfrak X^{\bullet - 1}_{\mathrm{sym}} (M) = \mathfrak X^{\bullet - 1}_{\mathrm{sym}} (M) \otimes C^\infty (M) = \mathfrak X^{\bullet - 1}_{\mathrm{sym}} (M) \otimes \Gamma (\operatorname{End} V) $ into $\mathfrak D_{\mathrm{sym}}^{\bullet}(V)$ as an abelian subalgebra and an ideal.
\end{remark}

There is also a notion of FWL $V$-multivector. In order to discuss it, it is is useful to discuss derivations of pull-back vector bundles first. So, let $V$ be a vector bundle, and consider its pull-back $V_{\mathcal P} := \pi^\ast V$ along a surjective submersion $\pi: \mathcal P \to M$. Clearly, a derivation $D$ of $V_{\mathcal P}$ is completely determined by its symbol and its action on pull-back sections. The restriction $D_M := D|_{\Gamma (V)}$ of $D$ to pull-back sections is a \emph{derivation along $\pi$}, i.e.~it is an $\mathbb R$-linear map $D_M : \Gamma (V) \to \Gamma (V_{\mathcal P})$ and there exists a, necessarily unique, vector field along $\pi$, denoted $l_{D_M} \in \Gamma (\pi^\ast TM)$, fitting in the Leibniz rule
\[
D_M (fv) = \pi^\ast (f) D_M (v) + l_{D_M}(f) v, \quad f \in C^\infty (M), \quad v \in \Gamma (V).
\]
The correspondence $D \mapsto (l_D, D_M)$ establishes a $C^\infty(M)$-linear bijection between derivations $D$ of $V_{\mathcal P}$ and pairs $(X, D_M)$ consisting of a vector field $X \in \mathfrak X (\mathcal P)$ and a derivation along $\pi$ satisfying the following additional compatibility: $d \pi \circ X = l_{D_M}$. When $\mathcal P = E \to M$ is a vector bundle, it makes sense to talk about \emph{polynomial sections of $V_E$}. Namely, $\Gamma (V_E) = C^\infty (E) \otimes \Gamma (V)$,  where the tensor product is over $C^\infty (M)$, and multiplicative reals act on $\Gamma (V_E)$ via their action on the first factor. As for functions, we denote by $h^\ast$ this action. A section $v$ of $\Gamma (V_E)$ is polynomial of degree $k$ if $h_t^\ast (v) = t^k v$ for all $t \in \mathbb R$, in other words $v \in C^\infty_{\mathrm{poly}}(E) \otimes \Gamma (V)$. Denote by $\Gamma (V_E)_k$ the space of polynomial sections of degree $k$, and by
\[
\Gamma_{\mathrm{poly}} (V_E) := \bigoplus_{k= 0}^\infty \Gamma (V_E)_k 
\]
the space of all polynomial sections. \emph{FWL sections} are degree $1$ sections and they identify with sections of $E^\ast \otimes V$. \emph{Core sections} are degree $0$ sections and they identify simply with sections of $V$. Similarly to vector fields, a derivation $D$ of $V_{E}$ is \emph{polynomial} of degree $k$ if it maps polynomial sections of degree $h$ to polynomial sections of degree $k + h$, in other words
\[
h_t^\ast D = t^k D
\]
for all $t \neq 0$. Polynomial derivations of $V_{E}$ will be denoted $\mathfrak D_{\mathrm{poly}}(V_{E})$ and they correspond to pairs $(X, D_M)$ where $X \in \mathfrak X_{\mathrm{poly}}(E)$ and $D_M$ takes values in polynomial sections.

We can also consider polynomial symmetric $V_E$-multivectors. We will only need core and FWL ones. A symmetric $V_E$-$k$-multivector $D$ is \emph{FWL} (resp.~\emph{core}) if it is polynomial of degree $1-k$ (resp.~$-k$), i.e.
\[
h_t^\ast D = t^{1-k} D\quad \text{(resp.~$h_t^\ast D = t^{-k} D$)}
\]
for all $t \neq 0$. We denote by $\mathfrak D^\bullet_{\mathrm{sym}, \mathrm{lin}} (V_E)$ (resp.~$\mathfrak D^\bullet_{\mathrm{sym}, \mathrm{core}} (V_E))$ the space of FWL (resp.~core) symmetric $V_E$-multivectors. The Lie bracket $\{-,-\}$ on $V$-multivectors preserves FWL ones. Additionally, the projection $L : \mathfrak D^\bullet_{\mathrm{sym}} (V_E) \to \mathfrak X^\bullet_{\mathrm{sym}}(E)$ maps FWL $V$-multivectors to FWL multivectors and we get a short exact sequence of Lie algebras:
\begin{equation}\label{eq:ses_D_sym_lin}
0 \longrightarrow \mathfrak X^{\bullet - 1}_{\mathrm{sym}, \mathrm{lin}} (E) \longrightarrow \mathfrak D^\bullet_{\mathrm{sym}, \mathrm{lin}} (V_E) \overset{L}{\longrightarrow} \mathfrak X^\bullet_{\mathrm{sym}, \mathrm{lin}} (E) \longrightarrow 0.
\end{equation}

\begin{proposition}
A $k$-$V_E$-multivector $D$ is core if and only if
\begin{enumerate}
\item $D (f_1, \ldots, f_{k-1}| v) \in \Gamma_{\mathrm{core}}(V_E)$,
\item $D (f_1, \ldots, f_{k-1}| w) = 0$,
\item $D (f_1, \ldots, f_{k-2}, h | v) = 0$,
\end{enumerate}
for all $f_i \in C^\infty_{\mathrm{lin}} (E)$, $h \in C^\infty_{\mathrm{core}}(E)$, $v \in \Gamma_{\mathrm{lin}}(V_E) := \Gamma (V_E)_1$, and all $w \in \Gamma_{\mathrm{core}}(V_E):= \Gamma (V_E)_0 = \Gamma (V)$.
\end{proposition}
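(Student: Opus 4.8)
\emph{The general principle and the forward direction.} The plan is to run the same argument that characterizes core symmetric multivectors in Section~\ref{sec:multivectors}, now for $V_E$-multivectors. First I would record the general evaluation identity obtained by unwinding the definition of $h_t^\ast D$: if $h_t^\ast D = t^d D$, then for homogeneous inputs $f_i \in C^\infty(E)_{a_i}$ and $v \in \Gamma(V_E)_b$ one has
\[
h_t^\ast\big(D(f_1, \ldots, f_{k-1}|v)\big) = t^{\,d + a_1 + \cdots + a_{k-1} + b}\, D(f_1, \ldots, f_{k-1}|v),
\]
so that the output is a polynomial section of degree $d + a_1 + \cdots + a_{k-1} + b$. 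Taking $D$ core, i.e.\ $d = -k$, and feeding in $f_i \in C^\infty_{\mathrm{lin}}(E)$ and $v \in \Gamma_{\mathrm{lin}}(V_E)$ (all of degree $1$) yields an output of degree $-k + (k-1) + 1 = 0$, which is a core section; this is item (1). Replacing $v$ by a core section, or replacing one $f_i$ by a core function while keeping $v$ linear, each produces an output of degree $-1$; since $\Gamma_{\mathrm{poly}}(V_E) = \bigoplus_{k \geq 0} \Gamma(V_E)_k$ has no negative-degree part, such outputs vanish, giving items (2) and (3). This settles the ``only if'' direction.

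\emph{The converse: reduction to generators.} For the ``if'' direction I would show $h_t^\ast D = t^{-k} D$ by checking equality of these two $V_E$-multivectors on a generating set of inputs. Both sides are symmetric multiderivations, hence local and determined by their values when the $f_i$ range over functions whose differentials span $T^\ast E$ pointwise, and $v$ ranges over a set of sections generating $\Gamma(V_E)$ as a $C^\infty(E)$-module; for the former it suffices to take $f_i$ core or FWL, and for the latter the core sections $\Gamma(V)$ suffice. On such generators each input is an eigenvector of $h_t^\ast$, so evaluating $h_t^\ast D$ via the pullback formula pulls out an explicit power of $t$, after which hypotheses (1)--(3) are applied directly: in the configuration of item (1) the output is core and thus $h_t^\ast$-invariant, matching $t^{-k}D$ exactly, while the configurations of items (2) and (3) give vanishing outputs on both sides.

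\emph{The main obstacle.} The genuine content, and the step I expect to be the hardest, is propagating the three hypotheses to \emph{all} remaining generator combinations---those in which one or more core functions occur together with a core section---which are not named in (1)--(3) but on which a core $D$ must also vanish (their formal degree being $\leq -1$). I would handle this either by working through the exact sequence \eqref{eq:ses_V-derivations} applied to $V_E \to E$, splitting $D$ into its symbol $L_D \in \mathfrak X^\bullet_{\mathrm{sym}}(E)$ and its $\mathfrak X^{\bullet-1}_{\mathrm{sym}}(E)\otimes\Gamma(\operatorname{End} V_E)$-part, and invoking the already-established characterization of core symmetric multivectors for $L_D$ together with its evident analogue for the endomorphism part; or, more directly, by expanding a FWL section as $\ell_\varphi w$ and a FWL function as $\ell_\psi$ and using the Leibniz rule in the function slots and the derivation rule in the section slot to reduce every mixed configuration to those covered by (1)--(3). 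This propagation is routine once the derivation identities are in hand, which is why it parallels the multivector case and can be compressed.
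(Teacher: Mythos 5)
Your argument is correct; the paper declares this proof ``straightforward'' and omits it, and your degree count for the forward direction together with the Leibniz-rule propagation for the converse (e.g.\ evaluating on the linear function $x^i u^\alpha$ to get $u^\alpha D(\ldots, x^i|w)=0$, hence vanishing off the zero section and, by continuity, everywhere) is exactly the standard way to fill it in. The only two points worth writing out are that continuity step and the observation that a smooth section $s$ of $V_E$ satisfying $h_t^\ast s = t^{-1}s$ for all $t\neq 0$ must vanish (let $t\to 0$); neither is a gap.
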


\begin{proof}
Straightforward.
\end{proof}

It easily follows from the above proposition that a core $k$-$V_E$-multivector is completely determined by its symbol. More precisely, the symbol map $D \mapsto L_D$ establishes a one-to-one correspondence between core $k$-$V_E$-multivectors and core multivectors. We conclude that $\mathfrak D^\bullet_{\mathrm{sym}, \mathrm{core}}(V_E) \cong \mathfrak X^\bullet_{\mathrm{sym}, \mathrm{core}}(E) \cong C^\infty_{\mathrm{poly}} (E^\ast)$. 

\begin{proposition}
A symmetric $k$-$V_E$-multivector $D$ is FWL if and only if
\begin{enumerate}
\item $D (f_1, \ldots, f_{k-1}| v) \in \Gamma_{\mathrm{lin}}(V_E)$,
\item $D (f_1, \ldots, f_{k-1}| w) \in \Gamma_{\mathrm{core}}(V_E)$,
\item $D (f_1, \ldots, f_{k-2}, h_1 | v) \in \Gamma_{\mathrm{core}}(V_E)$,
\item $D(f_1, \ldots, f_{k-2}, h_{1} | w) = 0$,
\item $D(f_1, \ldots, f_{k-3}, h_{1}, h_2 | v) = 0$,
\end{enumerate}
for all $f_i \in C^\infty_{\mathrm{lin}} (E)$, $h_j \in C^\infty_{\mathrm{core}}(E)$, $v \in \Gamma_{\mathrm{lin}}(V_E) := \Gamma (V_E)_1$, and all $w \in \Gamma_{\mathrm{core}}(V_E):= \Gamma (V_E)_0 = \Gamma (V)$.
\end{proposition}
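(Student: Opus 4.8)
The plan is to read both implications off a single \emph{degree count} for the output of $D$, exactly as in the analogous characterization of FWL symmetric multivectors in Section~\ref{sec:multivectors}. Since a symmetric $V_E$-multivector acts contravariantly on its function entries and covariantly on its section entry, for $t \neq 0$ one has the conjugation formula
\[
(h_t^\ast D)(f_1, \ldots, f_{k-1} \,|\, v) = h_t^\ast\Big( D\big( h_{1/t}^\ast f_1, \ldots, h_{1/t}^\ast f_{k-1} \,\big|\, h_{1/t}^\ast v \big)\Big).
\]
Feeding in homogeneous entries $f_i \in C^\infty(E)_{d_i}$ and $v \in \Gamma(V_E)_{e}$, on which $h_{1/t}^\ast$ acts by $t^{-d_i}$ and $t^{-e}$, the defining equation $h_t^\ast D = t^{1-k} D$ becomes equivalent to the single requirement
\[
h_t^\ast\big( D(f_1, \ldots, f_{k-1}\,|\,v) \big) = t^{\,(1-k) + d_1 + \cdots + d_{k-1} + e}\, D(f_1, \ldots, f_{k-1}\,|\,v),
\]
i.e.\ that $D(f_1, \ldots, f_{k-1}\,|\,v)$ be polynomial homogeneous of degree $(1-k) + \sum_i d_i + e$ for all homogeneous entries. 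As there are neither polynomial functions nor polynomial sections of negative degree, any entry combination for which this predicted degree is negative forces the output to vanish.

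For the ``only if'' direction the plan is to specialize this to the five entry types, recording the predicted degrees: with the $f_i$ of degree $1$, the $h_j$ of degree $0$, $v$ of degree $1$ and $w$ of degree $0$, items~(1),(2),(3) have predicted degrees $1,0,0$ (yielding $\Gamma_{\mathrm{lin}}(V_E)$, $\Gamma_{\mathrm{core}}(V_E)$, $\Gamma_{\mathrm{core}}(V_E)$ respectively), whereas items~(4) and~(5) have predicted degree $-1$ and hence vanish. These are precisely conditions (1)--(5).

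For the converse the task is to promote (1)--(5) to the full degree count above. First I would establish that the symbol $L_D \in \mathfrak{X}^k_{\mathrm{sym}}(E)$ is a \emph{FWL} multivector, by checking the three conditions of Section~\ref{sec:multivectors}. The tool is the section-slot Leibniz rule $D(\ldots\,|\,g\,w) = L_D(\ldots,g)\,w + g\,D(\ldots\,|\,w)$, into which one substitutes $v = g w$ with $g$ linear and $w$ core: conditions~(1),(2) then give $L_D(\text{$k$ lin}) \in C^\infty_{\mathrm{lin}}(E)$, and conditions~(3),(4) give $L_D(\text{$(k-1)$ lin},\,\text{core}) \in C^\infty_{\mathrm{core}}(E)$. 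The only delicate case is $L_D(\text{$(k-2)$ lin},\,h_1,\,h_2) = 0$; here I would place the second core function in the \emph{section} slot, taking $v = h_2 w$ (a core section), so that condition~(4) applies directly and kills both terms of the Leibniz identity.

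The main obstacle, and the reason conditions (1)--(5) look at first too few, is the \emph{deep vanishing}: for $k \geq 4$ the degree count also demands that $D(\ldots\,|\,v)$ vanish on entry combinations with three or more core functions, which are not among the listed conditions. I would handle this structurally rather than by a (circular) induction. For fixed core $w$ the assignment $Q_w := D(-,\ldots,-\,|\,w)$ is a $\Gamma(V_E)$-valued symmetric $(k-1)$-multiderivation, and conditions~(2) and~(4) are \emph{exactly} the two conditions characterizing $Q_w$ as a \emph{core} multivector (in the sense of Section~\ref{sec:multivectors}, applied to each component in a local frame of $V_E$). A core multivector is vertical, hence annihilates every core function in any slot; this yields $D(\ldots\,|\,w) = 0$ as soon as one function entry is core, covering all the missing deep cases for a core section. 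The linear-section deep cases then follow from $D(\ldots\,|\,g w) = L_D(\ldots,g)w + g\,Q_w(\ldots)$, since $L_D$ is FWL (so it vanishes on $\geq 2$ core entries) while $Q_w$ is core (so it vanishes on $\geq 1$ core entry). Finally, because $D$ is a multiderivation — first order and Leibniz in each function slot and in the section slot — the degree count propagates across products of generators, so its verification on the core/linear generators above is enough to conclude $h_t^\ast D = t^{1-k} D$ on all entries.
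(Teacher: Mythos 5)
Your proof is correct. The paper itself declares this proposition ``straightforward'' and omits the argument, and your write-up is a complete and accurate version of it: the conjugation/degree count gives the ``only if'' direction immediately, and your handling of the ``if'' direction is sound --- in particular you correctly identify the one genuinely non-obvious point (the required vanishing on entries with three or more core functions, which is \emph{not} among conditions (1)--(5)) and close it by observing that conditions (2) and (4) say exactly that $Q_w = D(-,\ldots,-\,|\,w)$ is a core $(k-1)$-multivector, hence kills any core function entry, with the linear-section cases then following from the section-slot Leibniz rule and the FWL-ness of $L_D$. The argument does lean on the unproved characterizations of core and FWL multivectors recalled in Section~\ref{sec:multivectors}, but that is legitimate here since the paper treats those as known.
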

\begin{proof}
Straightforward.
\end{proof}
 In particular, a FWL symmetric $k$-$V$-multivector $D$ determines a map:
\[
\Phi_D : \underset{\text{$k-1$ times}}{\underbrace{\Gamma (E^\ast) \times \cdots \times \Gamma (E^\ast)}} \to  \mathfrak D (V)
\]
via 
\[
\Phi_D (\varphi_1, \ldots, \varphi_{k-1})(w)  = D (\ell_{\varphi_1}, \ldots, \ell_{\varphi_k} | w)
\]
for all $\varphi_i \in \Gamma (E^\ast)$, and all $w \in \Gamma (V)$. The map $\Phi_D$ is $C^\infty (M)$-multilinear and symmetric. Hence, it can be seen as a vector bundle map $\Phi_D : S^{k-1}E^\ast \to DV$, or, equivalently, as a section of $S^{k-1} E \otimes DV$.

\begin{proposition}\label{prop:V_E-multi_pairs}
The assignment $D \mapsto (L_D, \Phi_D)$ establishes a $C^\infty (M)$-linear bijections between FWL symmetric $V_E$-multivectors $D \in \mathfrak D^\bullet_{\mathrm{sym}, \mathrm{lin}} (V_E)$ and pairs $(P, \Phi)$ consisting of a FWL symmetric multivector $P \in \mathfrak X^\bullet_{\mathrm{sym}, \mathrm{lin}} (E)$ and a vector bundle map $\Phi : S^{k-1}E^\ast \to DV$ such that $l_P = l \circ \Phi$.
\end{proposition}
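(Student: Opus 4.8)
The plan is to show that the forward assignment $D\mapsto(L_D,\Phi_D)$ takes values in the claimed set of compatible pairs, and then to invert it by a Leibniz-type reconstruction formula. First I would note that the two components are already essentially under control: $L_D=P$ is a FWL symmetric multivector by the exact sequence \eqref{eq:ses_D_sym_lin}, while $\Phi_D$ is a genuine vector bundle map $S^{k-1}E^\ast\to DV$ by the discussion immediately preceding the statement (its $C^\infty(M)$-multilinearity and symmetry). What remains for well-definedness is the compatibility $l_P=l\circ\Phi_D$, which is the only genuinely new point. It follows from the Leibniz rule of $D$ in its section slot: for $\varphi_i\in\Gamma(E^\ast)$, $f\in C^\infty(M)$ and $w\in\Gamma(V)$, compute $D(\ell_{\varphi_1},\ldots,\ell_{\varphi_{k-1}}\,|\,fw)$ in two ways. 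Since $fw$ is again a core section, it equals $\Phi_D(\varphi_1,\ldots,\varphi_{k-1})(fw)=f\,\Phi_D(\varphi_1,\ldots,\varphi_{k-1})(w)+l\bigl(\Phi_D(\varphi_1,\ldots,\varphi_{k-1})\bigr)(f)\,w$, because $\Phi_D(\varphi_1,\ldots,\varphi_{k-1})$ is a derivation of $V$. On the other hand, the defining property of the symbol $L_D=P$ gives $f\,\Phi_D(\varphi_1,\ldots,\varphi_{k-1})(w)+P(\ell_{\varphi_1},\ldots,\ell_{\varphi_{k-1}},f)\,w$, and $P(\ell_{\varphi_1},\ldots,\ell_{\varphi_{k-1}},f)=l_P(\varphi_1,\ldots,\varphi_{k-1})(f)$ by the very definition of $l_P$. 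Comparing for all $f$ yields $l\circ\Phi_D=l_P$.

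For injectivity I would argue that $(P,\Phi_D)$ determines $D$ on a spanning set. By the FWL characterization, $D$ is fixed by its values on linear function arguments $\ell_{\varphi_i}$ paired with either a core section $w$ or a linear section $\ell_\psi w$ (the cases involving core function arguments being forced by the remaining FWL vanishing properties). The core case is $\Phi_D$ itself, and the linear case is governed by the reconstruction formula
\[
D(\ell_{\varphi_1},\ldots,\ell_{\varphi_{k-1}}\,|\,\ell_\psi w)=\ell_\psi\,\Phi_D(\varphi_1,\ldots,\varphi_{k-1})(w)+\ell_{D_P(\varphi_1,\ldots,\varphi_{k-1},\psi)}\,w,
\]
obtained exactly as above, now taking the symbol of $D(\ell_{\varphi_1},\ldots,\ell_{\varphi_{k-1}}|-)$ in the direction of the linear function $\ell_\psi$ and using $P(\ell_{\varphi_1},\ldots,\ell_{\varphi_{k-1}},\ell_\psi)=\ell_{D_P(\varphi_1,\ldots,\varphi_{k-1},\psi)}$. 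Since the right-hand side depends only on $P$ (through $D_P$) and on $\Phi_D$, two FWL $V_E$-multivectors with the same image coincide.

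Surjectivity is where the real work sits, and I expect it to be the main obstacle. Given a compatible pair $(P,\Phi)$, I would define $D$ on core sections by $\Phi$, on linear sections by the displayed formula, and extend to all of $\Gamma_{\mathrm{poly}}(V_E)$ and to arbitrary function arguments by $C^\infty(E)$-multilinearity and the Leibniz rules, using $P$ to supply all symbol terms. The difficulty is to verify that this prescription is consistent, i.e.~that it respects the relations among generators: $C^\infty(M)$-bilinearity, symmetry in the function slots, and the two Leibniz rules (one in a function slot, one in the section slot). Each of these reduces to the corresponding property of $P$ as a FWL multivector / multiderivation $D_P$ and of $\Phi$ as a bundle map, in exact parallel with the (omitted) proof that a multiderivation assembles into a FWL symmetric multivector; the single extra relation — linking the section-slot Leibniz rule for core functions to the symbol of $\Phi$ — is precisely the hypothesis $l_P=l\circ\Phi$, by the computation of the first paragraph. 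Granting this, $D$ is a well-defined FWL symmetric $V_E$-multivector with $L_D=P$ and $\Phi_D=\Phi$, and $C^\infty(M)$-linearity of the correspondence is immediate from the formulas.

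Alternatively, one can organize the whole argument as a five-lemma comparison of \eqref{eq:ses_D_sym_lin} with the short exact sequence obtained by projecting a compatible pair $(P,\Phi)$ onto $P$, whose kernel consists of pairs $(0,\Phi)$ with $l\circ\Phi=0$, i.e.~$\Phi\in\Gamma(S^{k-1}E\otimes\operatorname{End}V)$. The map $D\mapsto(L_D,\Phi_D)$ is the identity on the common quotient $\mathfrak{X}^\bullet_{\mathrm{sym},\mathrm{lin}}(E)$, so it suffices to check that it restricts to an isomorphism on kernels; the bijection then follows at once from the five lemma, avoiding the explicit reconstruction.
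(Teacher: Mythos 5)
Your argument is correct; note that the paper itself gives no proof here (it is declared ``easy and left to the reader''), so there is nothing to compare against beyond checking your reasoning stands on its own. It does: the derivation of $l\circ\Phi_D=l_{L_D}$ by computing $D(\ell_{\varphi_1},\ldots,\ell_{\varphi_{k-1}}\,|\,fw)$ once via the Leibniz rule of the derivation $\Phi_D(\varphi_1,\ldots,\varphi_{k-1})\in\mathfrak D(V)$ and once via the section-slot Leibniz rule of $D$ with symbol $L_D$ is exactly the right mechanism, and your reconstruction formula for $D(\ell_{\varphi_1},\ldots,\ell_{\varphi_{k-1}}\,|\,\ell_\psi w)$ together with the vanishing conditions from the FWL characterization does pin $D$ down on a generating set, giving injectivity. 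For surjectivity your five-lemma variant is the cleanest route and I would promote it from ``alternative'' to the main argument, since it sidesteps the consistency checks you flag as the main obstacle in the direct construction. One caution there: the relevant kernel on the $D$-side is $\ker L\cap\mathfrak D^\bullet_{\mathrm{sym},\mathrm{lin}}(V_E)\cong\Gamma(S^{\bullet-1}E\otimes\operatorname{End}V)$ (a degree count shows a FWL element of $\mathfrak X^{\bullet-1}_{\mathrm{sym}}(E)\otimes\Gamma(\operatorname{End}V_E)$ of degree $1-k$ must be core $\otimes$ degree zero), which is the middle row of the paper's commutative diagram and matches your kernel $\{(0,\Phi):l\circ\Phi=0\}$ on the pairs side; do not read the kernel off the first term of \eqref{eq:ses_D_sym_lin} as printed, which does not agree with this. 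With that identification the map on kernels is visibly the isomorphism $Q\otimes A\mapsto\big((\varphi_1,\ldots,\varphi_{k-1})\mapsto\langle Q,\varphi_1\odot\cdots\odot\varphi_{k-1}\rangle A\big)$, surjectivity of the pairs-sequence onto $P$ follows by lifting $l_P$ through the surjective bundle map $l:DV\to TM$ (e.g.\ via a connection on $V$), and the five lemma finishes the bijection.
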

\begin{proof}
Easy and left to the reader.
\end{proof}

According to Proposition \ref{prop:V_E-multi_pairs} we will sometimes call the pair $(L_D, \Phi_D)$ itself a FWL symmetric $V_E$-multivector.

Now, we can combine the exact sequences (\ref{eq:ses_X_sym_lin}) and (\ref{eq:ses_D_sym_lin}) in one exact commutative diagram:
\[
\begin{array}{c}
\xymatrix{ & 0 & 0 & 0 & \\
0 \ar[r] & \Gamma (S^{\bullet -1} E \otimes \operatorname{End} V) \ar[u] \ar[r] & \Gamma (S^{\bullet-1} E \otimes DV) \ar[r]^-{\operatorname{id} \otimes l} \ar[u] & \Gamma (S^{\bullet-1} E \otimes TM) \ar[r] \ar[u] & 0 \\
0 \ar[r] & \Gamma (S^{\bullet -1} E \otimes \operatorname{End} V) \ar@{=}[u] \ar[r] & \mathfrak D^\bullet_{\mathrm{sym}, \mathrm{lin}}(V_E) \ar[u]^\Phi \ar[r]^-L & \mathfrak X_{\mathrm{sym}, \mathrm{lin}}^{\bullet}(E) \ar[u]^l \ar[r] & 0 \\
& 0 \ar[u] \ar[r] & \Gamma (S^{\bullet}E \otimes E^\ast) \ar[u]^-I \ar@{=}[r]
& \Gamma (S^{\bullet}E \otimes E^\ast) \ar[u] \ar[r] & 0 \\
& & 0 \ar[u] & 0 \ar[u] &
}
\end{array}.
\]
We only need to explain the map $I$. To do that, we first remark that $\pi$-vertical vector fields act naturally on sections of $V_E$, via
\[
X (f \otimes v) = X(f) \otimes v
\]
for all $X \in \Gamma (T^\pi E)$, $f \in C^\infty (E)$, and $v \in \Gamma(V)$. Now, take $e_1, \ldots, e_k \in \Gamma (E)$ and $\varphi \in \Gamma (E^\ast)$. Then
\[
I (e_1 \odot \cdots \odot e_k \otimes \varphi)(f_1, \ldots, f_{k-1} | v) = \frac{1}{k!} \cdot \ell_\varphi  \sum_{\sigma \in S_{k}} e_{\sigma(1)}^\uparrow (f_1) \cdots e_{\sigma(k-1)}^\uparrow (f_{k-1}) e_{\sigma(k)}^\uparrow (v).
\]
Equivalently, we can interpret $e_1 \odot \cdots \odot e_k$ as a core $V_E$-multivector, via the isomorphism $D^\bullet_{\mathrm{sym}, \mathrm{core}}(V_E) \cong \Gamma (S^\bullet E)$, and then multiply by the FWL function $\ell_\varphi$ to get a FWL $V_E$-multivector. 

We conclude this section showing that FWL symmetric $V_E$-multivectors do also identify with polynomial derivations of $V_{E^\ast}$. This is an easy consequence (among other things) of Proposition \ref{prop:V_E-multi_pairs}. Indeed, take $D \in \mathfrak D^\bullet_{\mathrm{sym}, \mathrm{lin}}(V_E)$ and let $(L_D, \Phi_D)$ be the corresponding pair. Denote by $\pi : E^\ast \to M$ the projection. We claim that $\Phi_D$ can be seen as a derivation along $\pi$. Indeed $\Phi_D$ is a section of $S^{\bullet -1} E \otimes DL$ and, by acting on a section $v \in \Gamma (V)$ with the $DL$-factor, we get a section $\Phi_D(v)$ of $S^{\bullet -1} E \otimes V$, i.e.~a polynomial section of $\Gamma (V_{E^\ast})$. In the following, we use this construction to interpret $\Phi_D$ as a derivation along $\pi$. If we do so, the pair $(H_{L_D}, \Phi_D)$ consists of a vector field on $E^\ast$, and a derivation $\Phi_D$ along $\pi$, with the additional property that $d \pi \circ H_{L_D} = l \circ \Phi_D$, hence it corresponds to a (polynomial) derivation $D^\ast$ of the pull-back vector bundle $V_{E^\ast}$. Finally a tedious, but straightforward computation shows that the bijection $D \mapsto D^\ast$ between linear $V_E$-multivectors and polynomial derivations of $V_{E^\ast}$ obtained in this way do also preserve the Lie algebra structures. When we equip $\mathfrak D^\bullet_{\mathrm{sym}, \mathrm{lin}}(E)$ with the product by a core $V_E$-multivector, the latter bijection becomes an isomorphism of Lie-Rinehart algebras. We have thus proved the main result in this section:
\begin{theorem}
Let $E \to M$ and $V \to M$ be vector bundles. The assignment $D \mapsto D^\ast$ establishes a degree inverting isomorphism of Lie-Rinehart algebras over $\mathfrak D^\bullet_{\mathrm{sym}, \mathrm{core}}(E) \cong C^\infty_{\mathrm{poly}} (E^\ast)$ between linear $V_E$-multivectors and polynomial derivations of $V_{E^\ast}$.
\end{theorem}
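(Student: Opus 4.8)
The plan is to build the bijection $D\mapsto D^\ast$ out of the identifications already available and then check it respects each piece of structure in turn. By Proposition \ref{prop:V_E-multi_pairs}, a FWL symmetric $V_E$-multivector $D$ is the same datum as a pair $(L_D,\Phi_D)$ with $L_D\in\mathfrak X^\bullet_{\mathrm{sym},\mathrm{lin}}(E)$ and a bundle map $\Phi_D:S^{\bullet-1}E^\ast\to DV$ subject to $l_{L_D}=l\circ\Phi_D$; on the other side, a polynomial derivation of $V_{E^\ast}$ is the same datum as a pair $(X,D_M)$ with $X\in\mathfrak X_{\mathrm{poly}}(E^\ast)$ and a polynomial-valued derivation $D_M$ along $\pi:E^\ast\to M$ subject to $d\pi\circ X=l_{D_M}$. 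Under these two identifications the wanted map is simply $(L_D,\Phi_D)\mapsto(H_{L_D},\Phi_D)$, where $H_{(-)}$ is the isomorphism of Section \ref{sec:multivectors} and $\Phi_D$ is read as a derivation along $\pi$ by letting its $DV$-component act on pull-back sections.

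First I would verify that this assignment is well defined, i.e.\ that $(H_{L_D},\Phi_D)$ satisfies the compatibility $d\pi\circ H_{L_D}=l\circ\Phi_D$ required of a derivation of $V_{E^\ast}$. Since the symbol of $\Phi_D$ as a derivation along $\pi$ is precisely $l\circ\Phi_D$, and since $l_{L_D}=l\circ\Phi_D$ by Proposition \ref{prop:V_E-multi_pairs}, it is enough to show that $d\pi\circ H_P$ recovers the symbol $l_P$ of an arbitrary $P\in\mathfrak X^\bullet_{\mathrm{sym},\mathrm{lin}}(E)$. This is a one-line check: evaluating $H_P=\{P,-\}$ on a core function $f\in C^\infty(M)$ and pairing with linear functions yields $\{P,f\}(\ell_{\varphi_1},\ldots,\ell_{\varphi_{k-1}})=l_P(\varphi_1,\ldots,\varphi_{k-1})(f)$, which is exactly the action of $l_P$. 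Bijectivity and $C^\infty(M)$-linearity are then immediate, because the map is a composite of $C^\infty(M)$-linear bijections and the two compatibility constraints above correspond to each other under $d\pi\circ H_P=l_P$.

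Next come the degree and Lie-bracket claims. The degree-inverting property, i.e.\ the negation of the $h$-weight in passing from $E$ to $E^\ast$, is inherited on symbols from the isomorphism $P\mapsto H_P$ of Section \ref{sec:multivectors} and is consistent with the weight of $\Phi_D(v)\in S^{\bullet-1}E\otimes V$ on the $\Phi$-level. For the bracket I would use that a derivation of $V_{E^\ast}$ is determined by its symbol together with its restriction to pull-back sections, and check $\{D_1,D_2\}^\ast=[D_1^\ast,D_2^\ast]$ on each of these. On symbols the statement is formal: $L$ is a Poisson map by Lemma \ref{lem:Poisson_D}, so $L_{\{D_1,D_2\}}=\{L_{D_1},L_{D_2}\}$, and Section \ref{sec:multivectors} carries this to the commutator $[H_{L_{D_1}},H_{L_{D_2}}]$. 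The genuine content is the restriction to pull-back sections $v\in\Gamma(V)$: expanding $[D_1^\ast,D_2^\ast](v)=D_1^\ast(\Phi_{D_2}(v))-D_2^\ast(\Phi_{D_1}(v))$ through the Leibniz rule for derivations of a pull-back bundle splits each term into a piece in which the $DV$-component acts (a composition of the $\Phi$'s) and a piece in which the symbol $H_{L_{(-)}}$ differentiates the fiber-polynomial coefficients. The composition pieces match the composition summands of the bullet product $D_1\bullet D_2-D_2\bullet D_1$ of \eqref{eq:Poisson_V-multivectors} directly; matching the symbol pieces to the symbol-insertion summands $D_i(\ldots,L_{D_j}(\ldots)| v)$ only succeeds after antisymmetrization and uses the compatibility $l_{L_D}=l\circ\Phi_D$ once more. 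This antisymmetrized bookkeeping over the $(k_1,k_2)$-unshuffles is exactly the ``tedious but straightforward'' computation, and it is the step I expect to be the main obstacle; no new idea is required beyond the symbol statement and the Leibniz rule.

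Finally, for the Lie--Rinehart structure over $\mathfrak D^\bullet_{\mathrm{sym},\mathrm{core}}(E)\cong C^\infty_{\mathrm{poly}}(E^\ast)$, both sides are Lie--Rinehart algebras over this ring (the target because polynomial derivations of the bundle $V_{E^\ast}$ form one, with anchor the symbol), and the anchors already agree, the symbol of $D^\ast$ being $H_{L_D}$ by construction. It then remains to see that left multiplication by a core $V_E$-multivector (the associative product of Lemma \ref{lem:Poisson_D}) goes over to multiplication of a derivation of $V_{E^\ast}$ by the corresponding fiber-polynomial function; this is a short verification on the associative-product formula, using $\mathfrak D^\bullet_{\mathrm{sym},\mathrm{core}}(V_E)\cong\Gamma(S^\bullet E)\cong C^\infty_{\mathrm{poly}}(E^\ast)$. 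Combined with the Lie-bracket statement, this promotes the bijection to the asserted isomorphism of Lie--Rinehart algebras.
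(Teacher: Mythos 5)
Your proposal is correct and follows essentially the same route as the paper: the paper also builds $D^\ast$ by passing through Proposition \ref{prop:V_E-multi_pairs} to the pair $(L_D,\Phi_D)$, reinterprets $\Phi_D$ as a derivation along $\pi:E^\ast\to M$ via its $S^{\bullet-1}E$-valued action on $\Gamma(V)$, and assembles $(H_{L_D},\Phi_D)$ into a polynomial derivation of $V_{E^\ast}$, leaving the bracket compatibility as a ``tedious but straightforward'' computation. Your explicit verification that $d\pi\circ H_P=l_P$ and your outline of the unshuffle bookkeeping only make explicit what the paper elides.
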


\section{Differential Operators and Their Symbols}\label{sec:DO}
We finally come to the object of our primary interest: differential operators. This sections is a super-short review of the subject.
 
Let $V,W \to M$ be vector bundles. A (linear) \emph{differential operator (DO in the following) of order $q$ from $V$ to $W$} is an $\mathbb R$-linear map
\[
\Delta : \Gamma (V) \to \Gamma (W)
\]
such that
\[
[\cdots[[\Delta, f_0], f_1], \cdots, f_q] = 0
\]
for all $f_i \in C^\infty (M)$. In particular, DO of order zero are just vector bundle maps $V \to W$. We denote by $DO_q (V, W)$ the space of order $q$ DOs from $V$ to $W$. Clearly, a DO of order $q$ is also a DO of order $q +1$, and we get the filtration
\[
DO_0 (V, W) = \Gamma (\operatorname{Hom}(V,W)) \subset DO_1 (V, W) \subset \cdots \subset DO_q(V, W) \subset
\]
The union of all $DO_q (V,W)$ will be denoted simply by $DO (V,W)$. A \emph{scalar DO} on $M$ is a DO acting on functions over $M$, i.e.~a DO from the trivial line bundle $\mathbb R_M := M \times \mathbb R$ to itself. We use the symbol $DO_{q}(\mathbb R_E)$ (instead of $DO_{q}(\mathbb R_M, \mathbb R_M)$) for scalar DOs.

The composition of an order $q$ and an order $r$ DO is an order $q + r$ DO. In particular, for all $q$, $DO_q (V,W)$ is a $C^\infty (M)$-module in two different ways: via composition on the left and composition on the right with a function on $M$ (seen as an order $0$ DO). We will consider the first module structure unless otherwise stated. The space $DO(\mathbb R_M)$ is a filtered non-commutative algebra with the composition. It is actually the universal enveloping algebra of the tangent Lie algebroid $TM \to M$. Being an associative algebra, $DO (\mathbb R_M)$ is also a Lie algebra with the commutator. Notice that the commutator of an order $q$ and an order $r$ scalar DO is an order $q + r -1$ scalar DO.

Given an order $q$ DO $\Delta : \Gamma (V) \to \Gamma (W)$ from $V$ to $W$, and functions $f_1, \ldots, f_q$, the nested commutator 
\[
[\cdots[\Delta, f_1], \cdots, f_q]
\]
is an order $0$ DO. Additionally, it is a derivation in each of the arguments $f_i$ and it is symmetric in those argument. In this way, we get a map
\[
\sigma : DO_q (V,W) \mapsto \mathfrak \Gamma \left(S^q TM \otimes \operatorname{Hom}(V,W)\right), \quad \Delta \mapsto \sigma (\Delta)
\]
with
\[
\sigma (\Delta) (f_1, \ldots, f_q) = [\cdots[\Delta, f_1], \cdots, f_q].
\]
The map $\sigma$ is called the \emph{symbol} and it fits in a short exact sequence of $C^\infty (M)$-modules
\begin{equation}\label{eq:ses_symbol}
0 \longrightarrow DO_{q-1}(V, W) \longrightarrow DO_{q}(V, W) \overset{\sigma}{\longrightarrow} \Gamma \left(S^q TM \otimes \operatorname{Hom}(V,W)\right) \longrightarrow 0
\end{equation}
where the second arrow is the inclusion. 

\begin{example}
Vector fields are first order scalar DOs. Derivations of the vector bundle $V$ are first order DOs $D$ from $V$ to itself such that $\sigma (D)$ belongs to $\mathfrak X (M) \subset \Gamma (TM \otimes \operatorname{End} V)$. Additionally we have $\sigma (D) = l_D$.
\end{example}

For scalar DOs the short exact sequence (\ref{eq:ses_symbol}) becomes
\[
0 \longrightarrow DO_{q-1}(\mathbb R_M) \longrightarrow DO_{q}(\mathbb R_M) \overset{\sigma}{\longrightarrow} \mathfrak X^q_{\mathrm{sym}}(M) \longrightarrow 0.
\]
The symbol of scalar DOs intertwines the commutator with the Poisson bracket (of symmetric multivectors) in the sense that
\[
\sigma \big([\Delta', \Delta]\big) = \big\{ \sigma(\Delta), \sigma (\Delta')\big\} 
\]
 whenever $\Delta \in DO_q(\mathbb R_M)$ and $\Delta' \in DO_{q'}(\mathbb R_M)$, in which case we take $[\Delta, \Delta'] \in DO_{q+q'-1}(\mathbb R_M)$. 
 
 We conclude this short review section commenting briefly on the coordinate description of (scalar) DOs. To do this we first fix our conventions on the multi-index notation for multiple partial derivatives. Let $(x^i)$, $i = 1, \ldots, n$ be variables. A length $k$ multi-index $I$ is a word $I = i_1 \ldots i_k$, with $i_j =1, \ldots, n$, where words are considered modulo permutations of their letters. The length $k$ of a multi-index $I = i_1 \cdots i_k$ is also denoted $|I|$. Words can be composed by concatenation and we also consider the empty multi-index $\varnothing$. If we do so, then multi-indexes are elements in the free abelian monoid spanned by $1, \ldots, n$. The lenght is then a monoid homomorphism. A lenght $k$ multi-index $I = i_1 \ldots i_k$, determines an order $k$ DO
 \[
 \frac{\partial^{|I|}}{\partial x^I} := \frac{\partial^k}{\partial x^{i_1} \cdots \partial x^{i_k}}.
 \]
 
 Now, we go back to manifolds $M$ (and vector bundles over them). Actually, $DO_q (\mathbb R_M)$ (likewise $DO_q (V,W)$) is the $C^\infty (M)$-module of sections of a vector bundle over $M$. If $(x^i)$ are coordinates on $M$, then $DO_q (\mathbb R_M)$ is spanned locally (in the corresponding coordinate neighborhood) by 
 \[
 \frac{\partial^{|I|}}{\partial x^I} , \quad |I| = 0, 1, \ldots, q.
 \]
More precisely, locally, every DO $\Delta \in DO_q (\mathbb R_M)$ can be uniquely written in the form
\begin{equation}\label{eq:Delta_loc}
\Delta =\sum_{|I| \leq q} \Delta^I (x) \frac{\partial^{|I|}}{\partial x^I}
\end{equation}
where the $ \Delta^I (x)$ are local functions on $M$. The $\Delta^I (x)$ can be recovered via formulas
\begin{equation}\label{eq:Delta^I}
\Delta^{i_1\cdots i_k} (x) =  \frac{1}{(i_1 \cdots i_k)!} [\cdots[\Delta, x^{i_1}], \cdots, x^{i_k}] (1), \quad k = 0, 1, \ldots, q,
\end{equation}
where, for a multi-index $I$, we denoted by $I!$ the product $I[1]! \cdots I[n]!$ where $I[i]$ is the number of times the letter $i$ occurs in $I$.


Finally, if $\Delta$ is an order $q$ scalar DO locally given by (\ref{eq:Delta_loc}), then its symbol $\sigma (\Delta)$ is locally given by
\[
\sigma (\Delta) = \frac{1}{q!} \Delta^{i_1 \cdots i_q} \frac{\partial}{\partial x^{i_1}} \odot \cdots \odot \frac{\partial}{\partial x^{i_q}}.
\]

\section{Core and Fiber-wise Linear Differential Operators}\label{sec:FWL_DO}
This is the main section of the paper. We propose a notion of \emph{FWL} (scalar) \emph{DO} on the total space of a vector bundle. Our definition is partly motivated by the fact that the symbol of a FWL DO is a FWL multivector. It is also motivated by the \emph{linearization construction} discussed in the next section. Yet another motivating little fact is that the Laplacian of a FWL metric is a FWL DO (Example \ref{ex:g_linear}).

Let $E \to M$ be a vector bundle. We have learnt from Sections \ref{sec:vector_fields}, \ref{sec:multivectors} and \ref{sec:derivations} that, given a type $\mathfrak T$ of geometric structures on manifolds (functions, vector fields, tensors, etc.) appropriate notions of \emph{core} and \emph{FWL} structures of the type $\mathfrak T$ on $E$ exist, and these notions can be identified by means of the following recipe: 1) notice that the space $\mathfrak T(E)$ of structures of type $\mathfrak T$ on $E$ is naturally graded (via the action of multiplicative reals on $E$ by fiber-wise scalar multiplication), 2) identify the smallest degree $k$ for which the degree $k$ homogeneous component $\mathfrak T (E)_k$ of $\mathfrak T (E)$ is non-trivial, and 3) put $\mathfrak T_{\mathrm{core}}(E) = \mathfrak T (E)_k$ and $\mathfrak T_{\mathrm{lin}}(E) = \mathfrak T (E)_{k+1}$. A quick check shows that this recipe cooks up the required definitions in all the cases considered so far. Notice that we could make this recipe much more rigorous adopting for the rather vague ``geometric structure of type $\mathfrak T$'' the very precise notion of \emph{natural vector bundle $\mathfrak T$}, but we will not need this level of abstraction. 

We adopt the strategy described above to define core and FWL DOs on $E$. Consider the non-commutative algebra $DO(\mathbb R_E)$ of scalar DOs $\Delta : C^\infty (E) \to C^\infty (E)$. We begin noticing that, for each $q$, the space $DO_q (\mathbb R_E)$ of DOs $\Delta : C^\infty (E) \to C^\infty (E)$ of order $q$ is naturally graded:
\[
DO_q (\mathbb R_E) = \bigoplus_{k = -q}^\infty DO_q (\mathbb R_E)_k
\]
where $DO_q (\mathbb R_E)_k$ consists of degree $k$ DOs (of order $q$), i.e.~DOs $\Delta$ such that
\[
h_t^\ast (\Delta) = t^{k} \Delta
\]
for all $t \neq 0$. The smallest degree $k$ for which $DO_q (\mathbb R_E)_k$ is non-trivial is $k = -q$. So, following our recipe, we put
\[
DO_{q, \mathrm{core}} (E) := DO_q (\mathbb R_E)_{-q},
\]
and call them \emph{core DOs}. We also put
\begin{equation}\label{eq:loc_DO_core}
DO_{\mathrm{core}}(E) := \bigoplus_{q} DO_{q, \mathrm{core}} (E).
\end{equation}
Let $(x^i, u^\alpha)$ be vector bundle coordinates on $E$, and let $(x^i, u_\alpha)$ be dual coordinates on $E^\ast$. A DO $F \in DO_q (\mathbb R_E)$ is a core DO if and only if, locally,
\begin{equation}\label{eq:core_DO}
F = \sum_{|A| = q}F^A (x) \frac{\partial^{|A|}}{\partial u^A},
\end{equation}
where $A = \alpha_1 \cdots \alpha_q$ is a lenght $q$ multi-index. 

It follows from (\ref{eq:core_DO}) that $DO_{\mathrm{core}}(E) \subset DO (\mathbb R_E)$ is the subalgebra spanned by core functions $C^\infty_{\mathrm{core}}(M)$ and core vector fields $\mathfrak X_{\mathrm{core}}(E)$. Equivalently, it is the universal enveloping algebra of the abelian Lie algebroid $E \Rightarrow M$. Because of the latter description, there is an algebra isomorphism
\[
\Gamma (S^\bullet E) \to DO_{\mathrm{core}}(E),
\]
mapping a monomial
\[
e_1 \odot \cdots \odot e_q, \quad e_i \in \Gamma (E),
\]
to the DO
\[
e_1^\uparrow \circ \cdots \circ e_q^\uparrow.
\]
In its turn, as already mentioned, $\Gamma (S^\bullet E)$ identifies with polynomial functions on $E^\ast$. 
In the following we will often identify $DO_{\mathrm{core}}(E)$ with both $\Gamma (S^\bullet E)$ and $C^\infty_{\mathrm{poly}} (E^\ast)$ via the latter isomorphisms. If $F \in DO_{q, \mathrm{core}}(E)$ is locally given by (\ref{eq:loc_DO_core}), then it identifies with 
\[
\frac{1}{q!}F^{\alpha_1 \cdots \alpha_q} (x) u_{\alpha_1} \odot \cdots \odot u_{\alpha_q} \in \Gamma (S^q E),
\]
and 
\[
\sum_{|A| = q}F^A (x) u_A = \frac{1}{q!}F^{\alpha_1 \cdots \alpha_q} (x) u_{\alpha_1} \cdots u_{\alpha_q}\in C^\infty (E^\ast)_q,
\]
where, for $A = \alpha_1 \cdots \alpha_q$, we denoted by $u_A$ the monomial $u_{\alpha_1} \cdots u_{\alpha_q}$.

We will always consider $DO (\mathbb R_E)$ as a $DO_{\mathrm{core}}(E)$-module with the scalar multiplication given by the \emph{left} composition. 

We now pass to \emph{FWL DOs}. Following our recipe again, for each $q$ we put
\[
DO_{q, \mathrm{lin}}(E) := DO_q(\mathbb R_E)_{-q+1}.
\]

\begin{definition}\label{def:FWL_DO}
DOs in $DO_{q, \mathrm{lin}}(E)$ are called  \emph{fiber-wise linear differential operators} (FWL DOs) of order $q$.
\end{definition}

For instance, $DO_{0, \mathrm{lin}}(E) = C^\infty_{\mathrm{lin}}(E)$, and $DO_{1, \mathrm{lin}}(E) = \mathfrak X_{\mathrm{lin}}(E) \oplus C^\infty (M)$. It is also clear that $DO_{q, \mathrm{lin}}(E) \supset DO_{q-1, \mathrm{core}}$ for all $q$. More precisely
\[
DO_{q-1, \mathrm{core}} = DO_{q, \mathrm{lin}}(E) \cap DO_{q-1}(\mathbb R_E).
\]

We put
\[
DO_{\mathrm{lin}}(E) := \bigoplus_{q} DO_q(\mathbb R_E)_{-q+1}.
\]
Clearly $DO_{\mathrm{core}}(E) \subseteq DO_{\mathrm{lin}}(E) \subseteq \subseteq DO (\mathbb R_E)$.

A DO $\Delta \in DO_{q}(\mathbb R_E)$ is FWL if and only if, in vector bundle coordinates, it looks like
\begin{equation}\label{eq:loc_DO_stab}
\Delta = \sum_{|A| = q -1} \Delta^{i|A}(x) \frac{\partial^{|A| + 1}}{\partial x^i \partial u^A} + \sum_{|B| = q} \Delta^B_\alpha (x) u^\alpha \frac{\partial^{|B|}}{\partial u^B}
 + \sum_{|C| = q-1} \Delta^C (x) \frac{\partial^{|C|}}{\partial u^C}.
\end{equation}
It is easy to see from this formula that $DO_{\mathrm{lin}}(E) \subset DO (\mathbb R_E)$ is the $DO_{\mathrm{core}}(E)$-submodule spanned by $1$, $C^\infty_{\mathrm{lin}} (E)$ and $\mathfrak X_{\mathrm{lin}}(E)$.

\begin{example}\label{ex:g_linear}
Let $g$ be a metric on $E$, and assume it is FWL. Then, the associated Laplacian operator $\Delta_g : C^\infty (E) \to C^\infty (E)$ is a FWL DO operator (of order $2$). One can see this working in vector bundle coordinates. But there is also a (basically) coordinate free proof that we now illustrate. First of all, from $g$ being FWL, it immediately follows that the inverse tensor $g^{-1}$ is FWL as well. Now, the covariant derivative $\nabla \theta $ of a $1$-form $\theta$ along the Levi-Civita connection $\nabla$ is the covariant $2$-tensor given by the formula:
\begin{equation}\label{eq:cov_der}
\nabla \theta = \frac{1}{2} \left(d \theta + \mathcal L_{\sharp (\theta)} g \right), 
\end{equation}
where $\sharp : T^\ast E \to TE$ is the musical isomorphism. Equivalently, the covariant derivative $\nabla_X Y$ of a vector field $Y$ along another vector field $X$ is the vector field $\nabla_X Y$ that acts on functions $f \in C^\infty (M)$ as follows:
\begin{equation}\label{eq:cov_der_2}
\nabla_X Y (f) = X(Y(f)) - \frac{1}{2}(\mathcal L_{\operatorname{grad} f} g)(X,Y)
\end{equation}
where $\operatorname{grad} f = \sharp (df)$ is the \emph{gradient} of $f$. Using (\ref{eq:cov_der}) (or (\ref{eq:cov_der_2})) and the naturality of both the de Rham differential and the Lie derivative, it is easy to see that the covariant derivative of arbitrary tensor fields commutes with the pull back along $h_t$ for all $t \neq 0$. As the Laplacian $\Delta_g f$ of a function $f$ is obtained by contracting the covariant derivative of $df$ with $g^{-1}$, then $\Delta_g$ decreases by one the degree of a homogeneous (fiber-wise polynomial) function. So it is a second order $DO$ of degree $1 - 2 = -1$, i.e.~a FWL DO of order 2, as claimed. It might be also interesting to remark that the Levi-Civita connection of a FWL metric is a FWL connection according to a definition introduced in \cite{PSV2020}.
\end{example}

\begin{lemma}\label{lem:stabilizer}
The space $DO_{\mathrm{lin}}(E)$ of linear DOs is the stabilizer Lie subalgebra of $DO_{\mathrm{core}}(E)$, i.e.~a DO $\Delta \in DO (\mathbb R_E)$ is in $DO_{\mathrm{lin}}(E)$ if and only if $[\Delta, F] \in DO_{\mathrm{core}}(E)$ for all $F \in DO_{\mathrm{core}}(E)$.
\end{lemma}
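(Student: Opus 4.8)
The plan is to prove the two inclusions separately, the ``if'' part being a short homogeneity count and the ``only if'' part being the substantial one. Throughout I use that for each fixed $t\neq 0$ the pullback $h_t^\ast$ is an algebra automorphism of $DO(\mathbb R_E)$ which also respects the commutator, so that $h_t^\ast[\Delta,F]=[h_t^\ast\Delta,h_t^\ast F]$ and hence the degrees of homogeneous operators add under the bracket (while the order of $[\Delta,F]$ is at most $q+r-1$ when $\Delta,F$ have orders $q,r$). Everything rests on the following elementary local observation, which I will call the \emph{key fact}: if $\Psi\in DO_s(\mathbb R_E)$ is homogeneous of degree $-s$, then $\Psi\in DO_{s,\mathrm{core}}(E)$. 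Indeed, writing $\Psi=\sum_{|I|+|A|\le s}a_{I,A}(x,u)\,\frac{\partial^{|I|}}{\partial x^I}\frac{\partial^{|A|}}{\partial u^A}$ in vector bundle coordinates and using the uniqueness of this expansion, the condition $h_t^\ast\Psi=t^{-s}\Psi$ forces $a_{I,A}(x,tu)=t^{|A|-s}a_{I,A}(x,u)$, i.e.\ $a_{I,A}$ is homogeneous of degree $|A|-s$ in the fibre coordinates; smoothness then demands $|A|\ge s$, whereas the order bound gives $|A|\le s$, so only the terms with $|A|=s$, $|I|=0$ and $a_{0,A}=a_{0,A}(x)$ survive, which is precisely the core normal form \eqref{eq:core_DO}.

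The ``if'' direction follows at once. By bilinearity of the commutator it suffices to treat homogeneous summands, so take $\Delta\in DO_{q,\mathrm{lin}}(E)=DO_q(\mathbb R_E)_{-q+1}$ and $F\in DO_{r,\mathrm{core}}(E)=DO_r(\mathbb R_E)_{-r}$. Then $[\Delta,F]$ has order at most $s:=q+r-1$ and, since degrees add, it is homogeneous of degree $(1-q)+(-r)=-s$. The key fact gives $[\Delta,F]\in DO_{s,\mathrm{core}}(E)\subseteq DO_{\mathrm{core}}(E)$.

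The content of the lemma is the ``only if'' direction, and this is where I expect the main obstacle: a priori $\Delta$ is an arbitrary DO with arbitrary smooth coefficients in $(x,u)$, and I must extract from the stabilizer condition both that $\Delta$ is fibre-wise polynomial and that it has the precise shape \eqref{eq:loc_DO_stab}. First I reduce the hypothesis to generators: the commutator is a derivation and $DO_{\mathrm{core}}(E)$ is the commutative subalgebra generated by $C^\infty_{\mathrm{core}}(E)=C^\infty(M)$ and the core vector fields $\mathfrak{X}_{\mathrm{core}}(E)$, so $[\Delta,F]\in DO_{\mathrm{core}}(E)$ for all $F$ is equivalent to $[\Delta,f]\in DO_{\mathrm{core}}(E)$ for $f\in C^\infty(M)$ together with $[\Delta,e^\uparrow]\in DO_{\mathrm{core}}(E)$ for $e\in\Gamma(E)$. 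Writing $\Delta=\sum a_{I,A}(x,u)\frac{\partial^{|I|}}{\partial x^I}\frac{\partial^{|A|}}{\partial u^A}$ and commuting with the local core fields $\partial/\partial u^\beta$ gives $[\Delta,\partial/\partial u^\beta]=-\sum(\partial a_{I,A}/\partial u^\beta)\frac{\partial^{|I|}}{\partial x^I}\frac{\partial^{|A|}}{\partial u^A}$; requiring this to be core (no base derivatives, coefficients depending only on $x$) forces $a_{I,A}$ to be independent of $u$ for $|I|\ge 1$ and affine in $u$ for $|I|=0$. This is exactly the fibre-wise polynomiality I need.

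It remains to eliminate the terms with $|I|\ge 2$, and here I exploit the decisive consequence of the commutativity of $DO_{\mathrm{core}}(E)$: once $[\Delta,f]\in DO_{\mathrm{core}}(E)$ and $f$ is itself core, we get $[[\Delta,f_1],f_2]=0$ for all $f_1,f_2\in C^\infty(M)$. Using the Weyl-algebra identity $[\frac{\partial^{|I|}}{\partial x^I},x^k]=n_k(I)\frac{\partial^{|I|-1}}{\partial x^{I-k}}$, where $n_k(I)$ is the multiplicity of the letter $k$ in the multi-index $I$, the coefficient of $\frac{\partial^{|I|-2}}{\partial x^{I-k-l}}\frac{\partial^{|A|}}{\partial u^A}$ in $[[\Delta,x^k],x^l]$ equals $a_{I,A}\,n_k(I)\,n_l(I-k)$; choosing $k,l$ to be any two letters of a multi-index $I$ with $|I|\ge 2$ forces $a_{I,A}=0$. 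The surviving terms have $|I|\le 1$ with the $u$-dependence found above, so $\Delta$ is a finite sum of operators of the form \eqref{eq:loc_DO_stab} and of core operators; each such summand is homogeneous of degree $\le 1$ and lies in the appropriate $DO_{q,\mathrm{lin}}(E)$, whence $\Delta\in DO_{\mathrm{lin}}(E)$. The genuinely delicate point is this passage from the \emph{single} stabilizer condition to the vanishing of the \emph{double} commutator, which is what pins $\Delta$ down to first order in the base directions; an alternative, more conceptual route would induct on the order of $\Delta$ by first showing that its symbol is a fibre-wise linear multivector, but that would require establishing the Poisson-bracket analogue of the present lemma at the level of symmetric multivectors.
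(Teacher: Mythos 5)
Your proof is correct and follows essentially the same route as the paper: work in vector bundle coordinates, use the uniqueness of the coefficient expansion of a DO, and pin $\Delta$ down to the normal form \eqref{eq:loc_DO_stab} by commuting it against the local core generators $x^i$ and $\partial/\partial u^\alpha$. The only (harmless) variations are that the paper kills the terms with $|I|\ge 2$ directly from the single commutator $[\Delta,x^i]$ being core (a core DO contains no $\partial/\partial x^j$ at all), where you route through the vanishing of the double commutator $[[\Delta,x^k],x^l]$, and that you spell out the order/degree count for the easy direction, which the paper dismisses as obvious; note also that your labels ``if''/``only if'' are swapped relative to the statement as phrased.
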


\begin{proof}
The ``only if part'' of the statement immediately follows from an obvious order/degree argument. For the ``if part'', consider a DO $\Delta$ of order $r$. Locally,
\[
\Delta = \sum_{|I| + |A| \leq r} \Delta^{I|A} (x, u)\frac{\partial^{|I| + |A|}}{\partial x^I \partial u^A}.
\]
Assume that $\Delta$ is in the stabilizer of $DO_{\mathrm{core}}(E)$. We want to show that $\Delta$ is the sum of operators of the form (\ref{eq:loc_DO_stab}) (with possibly varying $q \leq r$). As $x^i$ is a core function for all $i$, the commutator $[\Delta, x^i]$ is a core DO. But
\[
[\Delta, x^i] = \sum_{|J| + |A| \leq r - 1} \left(J[i] +1 \right)\Delta^{Ji|A} (x, u)\frac{\partial^{|J| + |A| - 1}}{\partial x^J \partial u^A},
\]
so it can only be a core DO for all $i$ if 1) $\Delta^{I|A}(x,u) = 0$ for $|I| > 1$, and 2) $\Delta^{i|A}(x,u) = \Delta^{i|A}(x)$. In other words, $\Delta$ is necessarily of the form
\begin{equation}\label{eq:partial_form}
\Delta = \sum_{|A| \leq r -1} \Delta^{i|A}(x) \frac{\partial^{|A| + 1}}{\partial x^i \partial u^A} + \sum_{|B| \leq k} \Delta^B(x,u) \frac{\partial^{|B|}}{\partial u^B}.
\end{equation}
and, to conclude, it is enough to prove that $\Delta^B(x,u)$ is of the form $\Delta^B(x,u) = \Delta^B_\alpha (x) u^\alpha + \Delta^B (x)$. To do this, recall that $\frac{\partial}{\partial u^\alpha}$ is a core vector field for all $\alpha$, hence $\left[\Delta, \frac{\partial}{\partial u^\alpha}\right]$ is a core DO. But, from (\ref{eq:partial_form}), 
\[
\left[\Delta, \frac{\partial}{\partial u^\alpha}\right] = - \frac{\partial \Delta^B (x,u)}{\partial u^\alpha} \frac{\partial^{|B|}}{\partial u^B},
\]
which is a core DO for all $\alpha$ if and only if 
\[
\frac{\partial^2 \Delta^B (x,u)}{\partial u^\alpha \partial u^\beta} = 0
\]
for all $\alpha, \beta$, i.e.~$\Delta^B(x,u)$ is a (non-necessarily homogeneous) first order polynomial in the variables $u$, as desired.
\end{proof}

It follows from Lemma \ref{lem:stabilizer} that $DO_{\mathrm{lin}}(E)$ is a Lie subalgebra in $DO (\mathbb R_E)$. As already mentioned, it is also a $DO_{\mathrm{core}}$-submodule. Actually, it is a Lie-Rinehart algebra over $DO_{\mathrm{core}}(E)$, the anchor being the adjoint operator $ad : \Delta \mapsto ad(\Delta):= [\Delta, -]$. To see this, first notice that $ad(\Delta)$ is indeed a well-defined derivation of $DO_{\mathrm{core}}(E)$ for all $\Delta \in DO_{\mathrm{lin}}(E)$. Now, take $\Delta, \Delta' \in DO_{\mathrm{lin}}(E)$ and $F, F' \in DO_{\mathrm{core}} (E)$, and compute
\[
\begin{aligned}
{}[\Delta, F \circ \Delta'] & = ad(\Delta)(F) \circ \Delta' + F \circ [\Delta, \Delta'], \\
ad(F \circ \Delta)(F') & = [F \circ \Delta, F'] = F \circ [\Delta, F'] = F \circ ad(\Delta)(F').
\end{aligned}
\]
Finally, from $DO_{\mathrm{core}}(E) \cong C^\infty_{\mathrm{poly}}(E^\ast)$, we see that, for every $\Delta \in DO_{\mathrm{lin}}(E)$, the derivation $ad(\Delta)$ determines a polynomial vector field (of the same degree) on $E^\ast$, also denoted $ad(\Delta)$.

\begin{theorem}\label{theor:stabilizer}
The sequence of Lie-Rinehart algebras
\begin{equation}\label{eq:ex_seq}
0 \longrightarrow DO_{\mathrm{core}}(E) \longrightarrow DO_{\mathrm{lin}}(E) \overset{ad}{\longrightarrow} \mathfrak X_{\mathrm{poly}} (E^\ast) \longrightarrow 0
\end{equation}
is exact. 
\end{theorem}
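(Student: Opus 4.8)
The sequence asserts three things: that the inclusion $DO_{\mathrm{core}}(E) \hookrightarrow DO_{\mathrm{lin}}(E)$ is injective, that $\ker \ad = DO_{\mathrm{core}}(E)$, and that $\ad$ is surjective onto $\mathfrak{X}_{\mathrm{poly}}(E^\ast)$. Injectivity is immediate, so the work is exactness in the middle and surjectivity. My plan is to reduce both to facts already in hand: the symbol exact sequence (\ref{eq:ses_symbol}) for scalar DOs and the isomorphism $P \mapsto H_P$ of Section \ref{sec:multivectors} between FWL symmetric multivectors on $E$ and polynomial vector fields on $E^\ast$. The bridge between these and $\ad$ is the identity $\ad(\Delta) = -H_{\sigma(\Delta)}$, which I would establish first.

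To prove that identity, take $\Delta \in DO_{\mathrm{lin}}(E)$ and $F \in DO_{\mathrm{core}}(E)$. Recall that $DO_{\mathrm{core}}(E)$ is identified with $\mathfrak{X}^\bullet_{\mathrm{sym}, \mathrm{core}}(E) \cong C^\infty_{\mathrm{poly}}(E^\ast)$ precisely through the principal symbol (a core DO has only its top-order term and so coincides with the symmetric core multivector given by its symbol). Since $[\Delta, F]$ is again core by Lemma \ref{lem:stabilizer}, the intertwining of the commutator with the Poisson bracket, $\sigma([\Delta,F]) = \{\sigma(F),\sigma(\Delta)\}$, together with antisymmetry of the bracket and the definition $H_{\sigma(\Delta)} = \{\sigma(\Delta),-\}$, gives, under these identifications,
\[
[\Delta, F] \;=\; \{\sigma(F),\sigma(\Delta)\} \;=\; -\{\sigma(\Delta),\sigma(F)\} \;=\; -H_{\sigma(\Delta)}(F),
\]
where $\sigma(\Delta)$ is the FWL symmetric multivector that is the symbol of the FWL operator $\Delta$. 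Hence $\ad(\Delta) = -H_{\sigma(\Delta)}$ as polynomial vector fields on $E^\ast$, i.e.\ $\ad = -H \circ \sigma$ on $DO_{\mathrm{lin}}(E)$.

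Next I would extract the FWL version of the symbol sequence. The maps in (\ref{eq:ses_symbol}) are equivariant for the $(\mathbb R,\cdot)$-action — the symbol is built from iterated commutators with functions and $h_t^\ast$ is an algebra automorphism of $DO(\mathbb R_E)$ — hence degree preserving. Taking the degree $1-q$ homogeneous component and using $DO_q(\mathbb R_E)_{1-q} = DO_{q,\mathrm{lin}}(E)$, $DO_{q-1}(\mathbb R_E)_{1-q} = DO_{q-1,\mathrm{core}}(E)$ and $\mathfrak{X}^q_{\mathrm{sym}}(E)_{1-q} = \mathfrak{X}^q_{\mathrm{sym}, \mathrm{lin}}(E)$ yields the short exact sequence
\[
0 \longrightarrow DO_{q-1,\mathrm{core}}(E) \longrightarrow DO_{q,\mathrm{lin}}(E) \overset{\sigma}{\longrightarrow} \mathfrak{X}^q_{\mathrm{sym}, \mathrm{lin}}(E) \longrightarrow 0 .
\]
Since $H$ is an isomorphism, $\ker \ad = \ker \sigma$, and this sequence gives $\ker(\sigma|_{DO_{q,\mathrm{lin}}}) = DO_{q-1,\mathrm{core}}(E)$; summing over $q$ gives $\ker \ad = DO_{\mathrm{core}}(E)$, the exactness in the middle. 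For surjectivity, $\sigma$ is onto $\mathfrak{X}^q_{\mathrm{sym}, \mathrm{lin}}(E)$ for each $q$ and $H$ restricts to an isomorphism $\mathfrak{X}^q_{\mathrm{sym}, \mathrm{lin}}(E) \cong \mathfrak{X}(E^\ast)_{q-1}$; as $q$ runs over all orders the targets exhaust $\mathfrak{X}_{\mathrm{poly}}(E^\ast) = \bigoplus_{k\geq -1}\mathfrak{X}(E^\ast)_k$, so $\ad = -H\circ\sigma$ is surjective.

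Finally, that the two arrows are Lie--Rinehart morphisms over $DO_{\mathrm{core}}(E) \cong C^\infty_{\mathrm{poly}}(E^\ast)$ is essentially already established: the inclusion lands in the commutative (hence abelian) ideal $DO_{\mathrm{core}}(E)$ with vanishing anchor, while $\ad$ is by construction the anchor of the Lie--Rinehart structure on $DO_{\mathrm{lin}}(E)$ built just before the theorem, so it is $C^\infty_{\mathrm{poly}}(E^\ast)$-linear and a Lie algebra map by the Jacobi identity. I expect the main obstacle to be the identity $\ad = -H\circ\sigma$: its correctness rests on the three identifications of $DO_{\mathrm{core}}(E)$ with symmetric core multivectors and with $C^\infty_{\mathrm{poly}}(E^\ast)$ being mutually compatible and realized by the principal symbol, so that the commutator-to-Poisson-bracket intertwining transports verbatim. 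As an independent check, the middle exactness can also be obtained by brute force from (\ref{eq:loc_DO_stab}): imposing $[\Delta, x^i]=0$ forces the first sum to vanish, and then $[\Delta, \tfrac{\partial}{\partial u^\alpha}]=0$ forces the second sum to vanish, leaving exactly the core form (\ref{eq:core_DO}).
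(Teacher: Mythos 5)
Your proof is correct, but it takes a genuinely different route from the paper's. The paper proves exactness entirely in vector bundle coordinates: for the middle term it reruns the computation of Lemma \ref{lem:stabilizer}, showing that $[\Delta, x^i]=0$ and $[\Delta,\partial/\partial u^\alpha]=0$ kill the coefficients $\Delta^{i|A}$ and $\Delta^B_\alpha$ in (\ref{eq:loc_DO_stab}), and for surjectivity it writes out $ad(\Delta)$ explicitly (formula (\ref{eq:ad_Delta})) and observes that this is the general homogeneous polynomial vector field of degree $q-1$. You instead factor $ad$ as $\pm H\circ\sigma$ and deduce both middle exactness and surjectivity from the symbol sequence (\ref{eq:ses_symbol}) restricted to the degree $(1-q)$ homogeneous component, together with the isomorphism $H:\mathfrak X^q_{\mathrm{sym},\mathrm{lin}}(E)\to\mathfrak X(E^\ast)_{q-1}$. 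This is more structural and explains \emph{why} the sequence is exact (it is the symbol sequence composed with an isomorphism); it is also exactly the observation the paper makes only \emph{after} the theorem, in the paragraph following Proposition \ref{prop:symbol}. The paper's coordinate route is more self-contained and is what actually produces formula (\ref{eq:ad_Delta}), which is reused later. Your closing ``independent check'' is, in substance, the paper's proof of middle exactness.

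Two small quibbles, neither fatal. First, the sign: with the standard convention $\sigma([\Delta,\Delta'])=\{\sigma(\Delta),\sigma(\Delta')\}$ one gets $ad(\Delta)=+H_{\sigma(\Delta)}$ (check it on $\Delta=u\,\partial/\partial u$, $F=\partial/\partial u$); your minus sign comes from taking the paper's displayed intertwining formula, whose arguments appear to be transposed, at face value. This is consistent with the paper's own later assertions (e.g.\ $A(X)=(X^\ast,D_X)$) only with the plus sign; in any case the sign is irrelevant for kernels and images. Second, the surjectivity of $\sigma: DO_{q,\mathrm{lin}}(E)\to\mathfrak X^q_{\mathrm{sym},\mathrm{lin}}(E)$ obtained by ``taking the homogeneous component'' of (\ref{eq:ses_symbol}) silently uses that a preimage of a homogeneous multivector can be replaced by its degree $(1-q)$ part, i.e.\ that a degree projection on $DO_q(\mathbb R_E)$ commuting with $\sigma$ exists; this is true (and is precisely Proposition \ref{prop:symbol}, which you could simply cite), but deserves a word.
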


\begin{proof}
First of all, as already remarked, $DO_{\mathrm{core}}(E)$ is in $DO_{\mathrm{lin}}(E)$. Even more, as it is an abelian subalgebra in $DO (\mathbb R_E)$, then it is actually in the kernel of $ad : DO_{\mathrm{lin}}(E) \to \mathfrak X_{\mathrm{poly}}(E^\ast)$. To see that core DOs exhaust the kernel of $ad$ (i.e.~$DO_{\mathrm{core}}(E)$ is its own centralizer), assume that $[\Delta, F] = 0$ for all $F \in DO_{\mathrm{core}}(E)$. Then, exactly the same computation as in the proof of Lemma \ref{lem:stabilizer} shows that $\Delta$ is locally of the form (\ref{eq:loc_DO_stab}) with $\Delta^{i|A} (x) = \Delta^B_{\alpha}(x) = 0$, i.e.~$\Delta \in DO_{\mathrm{core}}(E)$. For the exactness of the sequence (\ref{eq:ex_seq}) it remains to show that the map $ad : DO_{\mathrm{lin}}(E) \to \mathfrak X_{\mathrm{poly}}(E^\ast)$ is surjective. To do that, we work in local coordinates again. So, let $(x^i, u^\alpha)$ be vector bundle coordinates on $E$, and let $(x^i, u_\alpha)$ be dual coordinates on $E^\ast$. It is not hard to see that, if $\Delta$ is locally given by (\ref{eq:loc_DO_stab}), 
then the vector field $ad(\Delta)$ is locally given by
\begin{equation}\label{eq:ad_Delta}
ad(\Delta) =  \sum_{|A| = q -1} \Delta^{i|A}(x) u_A\frac{\partial}{\partial x^i} - \sum_{|B| = q} \Delta^B_\alpha (x)u_B \frac{\partial}{\partial u_\alpha},
\end{equation}
where, for a multi-index $A = \alpha_1 \cdots \alpha_{s}$, we denoted by $u_A$ the monomial $u_{\alpha_1} \cdots u_{\alpha_s}$ ($s =q, q-1$). As (\ref{eq:ad_Delta}) is the local expression of a generic homogeneous polynomial vector field of degree $q-1$, we are done.
\end{proof}

Our next aim is proving that the Lie-Rinehart algebra $DO_{\mathrm{lin}}(E)$ is canonically isomorphic to the Lie-Rinehart algebra of polynomial derivations of an appropriate line bundle on $E^\ast$. We begin with a simple

\begin{proposition}\label{prop:symbol}
The symbol $\sigma (\Delta)$ of a FWL DO $\Delta \in DO_{q, \mathrm{lin}}(E)$ is a FWL symmetric $q$-multivector field. Every FWL symmetric $q$-multivector field is the symbol of an order $q$ FWL DO.
\end{proposition}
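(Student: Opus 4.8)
The plan is to deduce both assertions from the naturality of the principal symbol under the scaling $h_t$, combined with the coordinate description already recorded in \eqref{eq:loc_DO_stab}.

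First I would establish that the symbol map is $h$-equivariant: for every $t \neq 0$ and every $\Delta \in DO_q(\mathbb R_E)$ one has
\[
\sigma(h_t^\ast \Delta) = h_t^\ast \sigma(\Delta),
\]
where on the right $h_t^\ast$ denotes the pull-back of symmetric multivectors. This is simply the naturality of the symbol under the diffeomorphism $h_t$: writing $h_t^\ast \Delta = h_t^\ast \circ \Delta \circ h_{1/t}^\ast$, for a multiplication operator $f$ one checks directly that $[h_t^\ast \Delta, f] = h_t^\ast\big([\Delta, h_{1/t}^\ast f]\big)$, and iterating this identity $q$ times and reading off the resulting order-zero operator yields exactly the pull-back multivector $h_t^\ast \sigma(\Delta)$. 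Granting this, if $\Delta \in DO_{q,\mathrm{lin}}(E)$, i.e.~$h_t^\ast \Delta = t^{1-q}\Delta$, then by $\mathbb R$-linearity of $\sigma$ we get $h_t^\ast \sigma(\Delta) = \sigma(h_t^\ast \Delta) = t^{1-q}\sigma(\Delta)$, which is precisely the defining condition for $\sigma(\Delta)$ to be a FWL symmetric $q$-multivector. This proves the first assertion.

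For the converse I would argue by a local construction followed by a partition-of-unity patching, taking care that the patching preserves fiber-wise linearity. Over a vector-bundle chart with coordinates $(x^i, u^\alpha)$, a FWL symmetric $q$-multivector $P$ has the explicit form
\[
P = \sum_{|A| = q-1} P^{i|A}(x)\, \frac{\partial}{\partial x^i}\odot\Big(\tfrac{\partial}{\partial u}\Big)^{\odot A} + \sum_{|B| = q} P^B_\alpha(x)\, u^\alpha \Big(\tfrac{\partial}{\partial u}\Big)^{\odot B},
\]
both summands having $h$-degree $1-q$ (this is the coordinate avatar of the short exact sequence \eqref{eq:ses_X_sym_lin}). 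Comparing with \eqref{eq:loc_DO_stab} I can write down a local FWL DO $\Delta_U$ whose order-$q$ part reproduces these coefficients (setting the lower-order coefficients $\Delta^C$ to zero), so that $\sigma(\Delta_U) = P$ on the chart. Choosing a partition of unity $\{\rho_a\}$ on $M$ subordinate to the cover and pulling it back to $E$, each $\rho_a$ is a core (degree $0$) function, hence each $\rho_a \Delta_{U_a}$ is still FWL; setting $\Delta := \sum_a \rho_a \Delta_{U_a}$ gives a globally defined FWL DO of order $q$. Finally, since $\sigma(\rho\,\Delta) = \rho\,\sigma(\Delta)$ for $\rho$ pulled back from $M$ (because such $\rho$ commutes with all the multiplication operators appearing in the defining commutators), I obtain $\sigma(\Delta) = \sum_a \rho_a\, \sigma(\Delta_{U_a}) = \sum_a \rho_a P = P$, as required.

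The main obstacle is the surjectivity (second) statement: the general symbol sequence \eqref{eq:ses_symbol} only guarantees a preimage $\Delta_0 \in DO_q(\mathbb R_E)$, which need not be homogeneous of degree $1-q$. One could instead try to extract the degree-$(1-q)$ homogeneous component of $\Delta_0$—legitimate in principle by the $h$-equivariance of $\sigma$ proved above, which forces $\sigma$ to preserve the grading—but this would require making sense of the homogeneous decomposition for a DO whose fiber coefficients are merely smooth rather than polynomial. The partition-of-unity route sidesteps this subtlety entirely; the key point making it work is that multiplying a FWL DO by a function pulled back from $M$ preserves both the order and the fiber-wise degree.
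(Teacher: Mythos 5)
Your proposal is correct, and it is worth noting where it diverges from the paper's own (one-line) proof, which simply reads off both directions from the local normal form \eqref{eq:loc_DO_stab} together with the coordinate form of a FWL symmetric $q$-multivector. For the first assertion you instead prove the equivariance $\sigma(h_t^\ast \Delta) = h_t^\ast \sigma(\Delta)$ and let the grading do the work; this is a genuinely different, coordinate-free key lemma, and it has the added benefit of showing in one stroke that $\sigma$ maps \emph{every} homogeneous component $DO_q(\mathbb R_E)_k$ into $\mathfrak X^q_{\mathrm{sym}}(E)_{k}$, not just the FWL one. For the surjectivity your argument coincides with the paper's in substance (matching the top-order coefficients of \eqref{eq:loc_DO_stab} against the local form of a FWL multivector), but you make explicit the partition-of-unity globalization that the paper leaves implicit; the key observation that left multiplication by a core function preserves both the order and the homogeneity degree $1-q$ is exactly the right one, and is legitimate since $DO_{q,\mathrm{lin}}(E)$ is a $C^\infty(M)$-module. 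One small imprecision: the identity $\sigma(\rho\,\Delta) = \rho\,\sigma(\Delta)$ holds for \emph{any} $\rho \in C^\infty(E)$ (multiplication operators always commute with one another, so $[\rho\Delta, f] = \rho[\Delta, f]$), not only for $\rho$ pulled back from $M$; the core condition on $\rho$ is needed only to preserve fiber-wise linearity, not $C^\infty$-linearity of the symbol. Your closing remark about why the abstract exact sequence \eqref{eq:ses_symbol} alone does not suffice is accurate and well taken.
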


\begin{proof}
The statement immediately follows from (\ref{eq:loc_DO_stab}) and the easy fact that a symmetric $q$-multivector $P$ is FWL if and only if, in vector bundle coordinates, it is of the form
\[
P = P^{i|\alpha_1 \cdots \alpha_{q-1}}(x) \frac{\partial}{\partial x^i} \odot \frac{\partial}{\partial u^{\alpha_1}} \odot \cdots \odot \frac{\partial}{\partial u^{\alpha_{q-1}}} + P^{\beta_1 \cdots \beta_q}_\alpha (x) u^\alpha \frac{\partial}{\partial u^{\beta_1}} \odot \cdots \odot \frac{\partial}{\partial u^{\beta_{q}}}.
\]
\end{proof}

Now let $\Delta \in DO_{q, \mathrm{lin}}(E)$. Notice that the adjoint operator $ad (\Delta)$, seen as a polynomial vector field on $E^\ast$, corresponds exactly to the symbol $\sigma (\Delta)$ via the isomorphism $\mathfrak X^q_{\mathrm{sym}, \mathrm{lin}}(E) \cong \mathfrak X (E^\ast)_{q-1}$. It is also clear that, in view of its coordinate form (\ref{eq:loc_DO_stab}), $\Delta$ is completely determined by $\sigma (\Delta)$ or, equivalently, $ad(\Delta)$, together with the map
\[
\Psi_\Delta : \underset{\text{$q-1$ times}}{\underbrace{\Gamma (E^\ast) \times \cdots \times \Gamma (E^\ast)}} \to C^\infty (M), \quad (\varphi_1, \ldots, \varphi_{q-1}) \mapsto [ \cdots [\Delta, \ell_{\varphi_1}], \cdots, \ell_{\varphi_{q-1}}](1).
\]
The map $\Psi_\Delta$ is clearly well-defined. Additionally, it enjoys the following properties
\begin{enumerate}
\item $\Psi_\Delta$ is symmetric,
\item $\Psi_\Delta$ is a first order DO in each entry. 
\end{enumerate}
More precisely, we have the following
\begin{lemma}
The map $\Psi_\Delta$ satisfies
\[
\Psi_\Delta (\varphi_1, \ldots, \varphi_{q-2}, f \varphi_{q-1}) = f \Psi_\Delta (\varphi_1, \ldots, \varphi_{q-2},  \varphi_{q-1}) + l_{\sigma (\Delta)}(\varphi_1, \ldots, \varphi_{q-1})(f),
\]
for all $\varphi_i \in \Gamma (E^\ast)$ and $f \in C^\infty (M)$.
\end{lemma}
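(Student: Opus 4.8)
The plan is to peel off the last commutator in the definition of $\Psi_\Delta$ and to exploit the fact that the commutator with a multiplication operator obeys the Leibniz rule. Set $\Delta' := [\cdots[\Delta,\ell_{\varphi_1}],\cdots,\ell_{\varphi_{q-2}}]$, so that $\Delta'$ is a scalar DO of order $2$, and, by the degree count for commutators recalled in Section \ref{sec:DO} (the degree of $\Delta$ is $1-q$ and each $\ell_{\varphi_i}$ has degree $1$), of degree $-1$. Since $\varphi \mapsto \ell_\varphi$ is $C^\infty(M)$-linear we have $\ell_{f\varphi_{q-1}} = (\pi^\ast f)\,\ell_{\varphi_{q-1}}$, where $\pi^\ast f$ denotes the core function associated with $f$. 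Viewing $\pi^\ast f$ and $\ell_{\varphi_{q-1}}$ as commuting order-$0$ operators and using that $[\Delta',-]$ is a derivation of the associative algebra $DO(\mathbb R_E)$, I would expand
\[
[\Delta',(\pi^\ast f)\,\ell_{\varphi_{q-1}}] = [\Delta',\pi^\ast f]\circ \ell_{\varphi_{q-1}} + (\pi^\ast f)\circ[\Delta',\ell_{\varphi_{q-1}}],
\]
and evaluate both sides on the constant function $1$, so that $\Psi_\Delta(\varphi_1,\ldots,f\varphi_{q-1})$ appears as a sum of two terms.

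The second term is immediate: $(\pi^\ast f)\circ[\Delta',\ell_{\varphi_{q-1}}](1) = (\pi^\ast f)\,\Psi_\Delta(\varphi_1,\ldots,\varphi_{q-1})$, which under the identification of core functions with $C^\infty(M)$ is exactly $f\,\Psi_\Delta(\varphi_1,\ldots,\varphi_{q-1})$, the first summand in the claimed formula. It therefore remains to identify the first term $[\Delta',\pi^\ast f](\ell_{\varphi_{q-1}})$ with $l_{\sigma(\Delta)}(\varphi_1,\ldots,\varphi_{q-1})(f)$.

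For this I first recall, from the definition of the symbol multiderivation in Section \ref{sec:multivectors}, that
\[
l_{\sigma(\Delta)}(\varphi_1,\ldots,\varphi_{q-1})(f) = \sigma(\Delta)(\ell_{\varphi_1},\ldots,\ell_{\varphi_{q-1}},\pi^\ast f) = [[\Delta',\ell_{\varphi_{q-1}}],\pi^\ast f](1),
\]
the last expression being the action on $1$ of the order-$0$ operator defining the symbol. Setting $B := [\Delta',\pi^\ast f]$, the Jacobi identity together with $[\ell_{\varphi_{q-1}},\pi^\ast f]=0$ gives $[[\Delta',\ell_{\varphi_{q-1}}],\pi^\ast f] = [B,\ell_{\varphi_{q-1}}]$. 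A short computation shows, for any operator $B$, that $[B,\ell_{\varphi_{q-1}}](1) = B(\ell_{\varphi_{q-1}}) - B(1)\,\ell_{\varphi_{q-1}}$, so that $B(\ell_{\varphi_{q-1}})$ and $[B,\ell_{\varphi_{q-1}}](1)$ coincide as soon as $B(1)=0$.

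The main (and only genuinely delicate) point is thus to show that $B(1)$ vanishes. Here I invoke the degree count once more: $B$ has order $\le 1$ and degree $-1$, so $B(1)$ is obtained by applying a degree-$(-1)$ operator to the degree-$0$ function $1$ and is therefore a function of degree $-1$; since fiber-wise polynomial functions have non-negative degree, $B(1)=0$. Consequently $[\Delta',\pi^\ast f](\ell_{\varphi_{q-1}}) = B(\ell_{\varphi_{q-1}}) = [B,\ell_{\varphi_{q-1}}](1) = l_{\sigma(\Delta)}(\varphi_1,\ldots,\varphi_{q-1})(f)$, which combined with the second term yields the asserted Leibniz rule.
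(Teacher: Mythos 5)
Your argument is correct and is essentially the paper's own proof: both peel off the last commutator via the Leibniz rule, so that $\Psi_\Delta(\varphi_1,\ldots,f\varphi_{q-1})$ splits as $f\,\Psi_\Delta(\varphi_1,\ldots,\varphi_{q-1})$ plus $[\Delta',\pi^\ast f](\ell_{\varphi_{q-1}})$, and then identify the latter with $l_{\sigma(\Delta)}(\varphi_1,\ldots,\varphi_{q-1})(f)$ after observing that $[\Delta',\pi^\ast f](1)=0$. The only (immaterial) difference is that you justify this vanishing by a homogeneity/degree count, whereas the paper appeals to the local normal form (\ref{eq:loc_DO_stab}); both are valid.
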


\begin{proof}
Let $\varphi_i$ and $f$ be as in the statement, and compute
\[
\begin{aligned}
& \Psi_\Delta (\varphi_1, \ldots, \varphi_{q-2}, f \varphi_{q-1}) \\
& = [[\cdots [ \Delta, \ell_{\varphi_1}], \cdots, \ell_{\varphi_{q-2}}], f \ell_{\varphi_{q-1}}](1) \\
& = f[[\cdots [ \Delta, \ell_{\varphi_1}], \cdots, \ell_{\varphi_{q-2}}], \ell_{\varphi_{q-1}}](1) + [\cdots [ \Delta, \ell_{\varphi_1}], \cdots, \ell_{\varphi_{q-2}}] (\ell_{\varphi_{q-1}}) \\
& = f\Psi_\Delta (\varphi_1, \ldots, \varphi_{q-2}, f \varphi_{q-1}) +  [[\cdots [ \Delta, \ell_{\varphi_1}], \cdots, \ell_{\varphi_{q-2}}], f] (\ell_{\varphi_{q-1}})
\end{aligned}
\]
It remains to compute the last summand. So
\[
\begin{aligned}
& [[\cdots [ \Delta, \ell_{\varphi_1}], \cdots, \ell_{\varphi_{q-2}}], f] (\ell_{\varphi_{q-1}}) \\
& = [[[\cdots [ \Delta, \ell_{\varphi_1}], \cdots, \ell_{\varphi_{q-2}}], f],\ell_{\varphi_{q-1}}] + \ell_{\varphi_{q-1}} [[\cdots [ \Delta, \ell_{\varphi_1}], \cdots, \ell_{\varphi_{q-2}}], f] (1) \\
& = l_{\sigma (\Delta)}(\varphi_1, \ldots, \varphi_{q-1})(f),
\end{aligned}
\]
where we used that, from (\ref{eq:loc_DO_stab}) again, the last summand in the second line is necessarily zero. This concludes the proof.
\end{proof}

The data $(\sigma(\Delta), \Psi_\Delta)$ (determine $\Delta$ completely and) can be repackaged in a very useful way. Namely, consider the line bundle
\[
L = \wedge^{\mathrm{top}} E.
\]
Then the pair $(\sigma(\Delta), \Psi_\Delta)$ determines a FWL $q$-$L_E$-multivector in the following way. Recall that a FWL $q$-$L_E$-multivector can be equivalently presented as a pair $(P, \Phi)$ consisting of a FWL symmetric multivector $P \in \mathfrak X^\bullet_{\mathrm{sym}, \mathrm{lin}} (E)$ and a vector bundle map $\Phi : S^\bullet E^\ast \to DL$, such that $l_P = l \circ \Phi$. We claim that we can construct such a pair from the pair $(\sigma(\Delta), \Psi_\Delta)$. Namely, we put
\[
P = \sigma(\Delta)
\]
 and define $\Phi = \Phi_\Delta$ by putting
 \begin{equation}\label{eq:Phi_Delta}
\Phi_\Delta (\varphi_1, \ldots, \varphi_{q-1} ) (U) := \sigma (\Delta) (\varphi_1, \ldots, \varphi_{q-1}, -) U + \Psi_\Delta (\varphi_1, \ldots, \varphi_{q-1}) U,
 \end{equation}
for all $\varphi_i \in \Gamma (E^\ast)$ and $U \in \Gamma (L)$. Equation (\ref{eq:Phi_Delta}) needs some explanations. In the first summand of the rhs, we interpret $\sigma (\Delta)$ as a $q$-multiderivation of $E^\ast$, so, when contracting it with the $q-1$ sections $\varphi_1, \ldots, \varphi_{q-1}$, we get a plain derivation $\sigma (\Delta) (\varphi_1, \ldots, \varphi_{q-1}, -) \in \mathfrak D (E^\ast) \cong \mathfrak D (E)$. As already remarked, derivations of $E$ act on the exterior algebra of $E$. In our case we have
 \[
 D (e_1 \wedge \cdots \wedge e_{\mathrm{top}}) = \sum_{i} e_1 \wedge \cdots \wedge De_i \wedge \cdots \wedge e_{\mathrm{top}}.
 \]
 for all $D \in \mathfrak D (E)$, and all $e_i \in \Gamma (E)$.
 
 The next theorem is the main result of the paper.
 
 \begin{theorem}\label{theor:iso_DO_D_sym}
The assignment $\Delta \mapsto (\sigma(\Delta), \Phi_\Delta)$ establishes a degree inverting isomorphism of Lie-Rinehart algebras $A: DO_{\mathrm{lin}}(E) \to \mathfrak D_{\mathrm{sym}, \mathrm{lin}}(L_E)$. 
\end{theorem}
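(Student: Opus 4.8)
\section*{Proof proposal}

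The plan is to establish, in order, that $A$ is well defined, that it is bijective, and that it intertwines both the module structures and the brackets, relying throughout on the pair description of Proposition \ref{prop:V_E-multi_pairs} and on the two short exact sequences already available.

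First I would check that $A$ indeed lands in $\mathfrak D_{\mathrm{sym}, \mathrm{lin}}(L_E)$. By Proposition \ref{prop:symbol} the first component $P=\sigma(\Delta)$ is a FWL symmetric multivector, so by Proposition \ref{prop:V_E-multi_pairs} it remains to see that $\Phi_\Delta$ is a genuine vector bundle map $S^{q-1}E^\ast\to DL$ with $l_{\sigma(\Delta)}=l\circ\Phi_\Delta$. Symmetry of $\Phi_\Delta$ is inherited from $\sigma(\Delta)$ and $\Psi_\Delta$, and the symbol identity is immediate from \eqref{eq:Phi_Delta}, since the second summand there is a zeroth order (scalar endomorphism) term and the first summand is the derivation $\sigma(\Delta)(\varphi_1,\ldots,\varphi_{q-1},-)$, whose symbol is exactly $l_{\sigma(\Delta)}(\varphi_1,\ldots,\varphi_{q-1})$. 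The only substantial point is $C^\infty(M)$-linearity in the last entry: replacing $\varphi_{q-1}$ by $f\varphi_{q-1}$ turns the first summand of \eqref{eq:Phi_Delta} into $f$ times the original derivation plus a rank-one endomorphism of $E^\ast$, whose induced action on $L=\wedge^{\mathrm{top}}E$ is multiplication by minus its trace; that trace equals $l_{\sigma(\Delta)}(\varphi_1,\ldots,\varphi_{q-1})(f)$ and therefore cancels precisely the anomalous term produced by $\Psi_\Delta$ according to the Leibniz rule of the preceding Lemma. This cancellation—a trace computation on $\wedge^{\mathrm{top}}E$—is the structural reason why $L=\wedge^{\mathrm{top}}E$, and no other line bundle, makes the construction close up, and I would single it out as the key point of this step.

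For bijectivity I would avoid inverting $A$ directly and instead display it as a morphism of short exact sequences. Composing with the isomorphism $\mathfrak D_{\mathrm{sym},\mathrm{lin}}(L_E)\cong\mathfrak D_{\mathrm{poly}}(L_{E^\ast})$ from the last theorem of Section \ref{sec:derivations}, the target carries the symbol sequence of a line bundle derivation, namely $0\to C^\infty_{\mathrm{poly}}(E^\ast)\to\mathfrak D_{\mathrm{poly}}(L_{E^\ast})\overset{l}{\to}\mathfrak X_{\mathrm{poly}}(E^\ast)\to 0$, whose kernel is $\Gamma_{\mathrm{poly}}(\operatorname{End}L_{E^\ast})=C^\infty_{\mathrm{poly}}(E^\ast)$ because $L$ is a line bundle; the source carries the sequence \eqref{eq:ex_seq} of Theorem \ref{theor:stabilizer}. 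Writing $A'$ for the composite, one has $l\circ A'=ad$ since the vector field part of $A'(\Delta)$ is $H_{\sigma(\Delta)}=ad(\Delta)$, so $A'$ induces the identity of $\mathfrak X_{\mathrm{poly}}(E^\ast)$ on quotients; on kernels $A'$ restricts to the canonical identification $DO_{\mathrm{core}}(E)\cong C^\infty_{\mathrm{poly}}(E^\ast)$, because for a core operator $\sigma(\Delta)$ vanishes while $\Phi_\Delta$ is the corresponding scalar endomorphism of $L$. The five lemma then forces $A$ to be bijective; the same conclusion could alternatively be read off by matching coefficients in the local normal form \eqref{eq:loc_DO_stab} against Proposition \ref{prop:symbol}.

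It remains to verify that $A$ is a Lie--Rinehart morphism. The module identity $A(F\circ\Delta)=F\cdot A(\Delta)$ for $F\in DO_{\mathrm{core}}(E)\cong C^\infty_{\mathrm{poly}}(E^\ast)$ follows, on the $P$-component, from multiplicativity of the symbol under composition, and on the $\Phi$-component from the explicit associative product of Lemma \ref{lem:Poisson_D} specialised to a core factor. For the bracket I would treat the two components of $A$ separately. On the $P$-component the relation $\sigma([\Delta',\Delta])=\{\sigma(\Delta),\sigma(\Delta')\}$ recorded in Section \ref{sec:DO} matches the projection $L$ of the bracket \eqref{eq:Poisson_V-multivectors}, which is a Poisson map by Lemma \ref{lem:Poisson_D}. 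The hard part, and the main obstacle of the whole proof, is the $\Phi$-component: I must check that $\Phi_{[\Delta',\Delta]}$ agrees with the derivation-valued part of $\{A(\Delta),A(\Delta')\}$ computed from \eqref{eq:Poisson_V-multivectors}. The cleanest route is to transport everything to $\mathfrak D_{\mathrm{poly}}(L_{E^\ast})$, where the bracket is the plain commutator of derivations; there the vector field part of $A'(\Delta)$ is $ad(\Delta)$, and since $ad$ is a Lie algebra homomorphism by the Jacobi identity, the vector field part is automatically compatible, so the entire verification collapses to the action on pull-back sections of $L$, i.e.\ to a bookkeeping computation with the functions $\Psi_\Delta$, the local forms \eqref{eq:loc_DO_stab} and \eqref{eq:ad_Delta}, and the Leibniz rule already established. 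Granting this last nested-commutator computation, $A$ is the asserted degree inverting isomorphism of Lie--Rinehart algebras.
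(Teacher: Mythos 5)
Your proposal is essentially correct, and the first step (well-definedness of $\Phi_\Delta$ via the trace computation on $\wedge^{\mathrm{top}}E$ cancelling the Leibniz anomaly of $\Psi_\Delta$) coincides with the paper's argument; you correctly isolate it as the structural reason for the choice of $L$. Where you genuinely diverge is in the remaining steps. For bijectivity the paper proves injectivity by observing that $\Psi_\Delta$, hence $\Delta$, is reconstructible from $(\sigma(\Delta),\Phi_\Delta)$, and gets surjectivity by local dimension counting; your five-lemma argument comparing the sequence (\ref{eq:ex_seq}) of Theorem \ref{theor:stabilizer} with the symbol sequence of $\mathfrak D_{\mathrm{poly}}(L_{E^\ast})$ is a clean alternative that makes better conceptual use of Theorem \ref{theor:stabilizer} — just note that the commutativity on kernels requires checking that a core operator of order $q-1$, viewed in $DO_{q,\mathrm{lin}}(E)$, has vanishing order-$q$ symbol and that $\Psi$ then reproduces the canonical identification $DO_{\mathrm{core}}(E)\cong C^\infty_{\mathrm{poly}}(E^\ast)$, which is true but is exactly the content you are asserting. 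For the bracket, the paper avoids your ``bookkeeping computation on pull-back sections'' entirely: having established $DO_{\mathrm{core}}(E)$-linearity and anchor compatibility, it reduces the bracket check to the generators $1$, $\ell_\varphi$ and $X\in\mathfrak X_{\mathrm{lin}}(E)$ via the explicit formulas (\ref{eq:A_generators}), which is a finite and essentially trivial verification; your route would work but is where you have deferred the most labour. Finally, be aware that your one-line dispatch of the module identity ($\Phi_{F\circ\Delta}=F\cdot\Phi_\Delta$ ``follows from the associative product of Lemma \ref{lem:Poisson_D}'') conceals the only genuinely delicate computation in the paper's proof: neither $\Psi_{F\circ\Delta}$ nor the derivation summand of $\Phi_{F\circ\Delta}$ is separately $F$ times the corresponding piece — each acquires an extra term $\propto\langle F,\varphi^{\odot(p-1)}\odot\sigma(\Delta)(\varphi^{\odot q})\rangle$, and these cancel only in the sum (equations (\ref{eq:Psi_FDelta}) and (\ref{eq:L_FDelta})). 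Since the generator reduction of the bracket relies on this linearity, that cancellation is the one deferred step you would actually have to write out.
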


\begin{proof}
First of all, we have to show that $\Phi_\Delta$ is well-defined, i.e.~it is symmetric and $C^\infty (M)$-linear in all its arguments. The symmetry is obvious. For the linearity, let $f \in C^\infty (M)$, and compute
\[
\begin{aligned}
& \Phi_\Delta (\varphi_1, \ldots, \varphi_{q-2}, f \varphi_{q-1}) (U) \\
& = {\sigma (\Delta) (\varphi_1, \ldots, \varphi_{q-2}, f \varphi_{q-1}, -)} U + \Psi_\Delta (\varphi_1, \ldots, \varphi_{q-2}, f\varphi_{q-1}) U.
\end{aligned}
\] 
Let us compute the two summands separately. First of all, for every $\varphi \in \Gamma (E^\ast)$,
\[
\sigma (\Delta) (\varphi_1, \ldots, \varphi_{q-2}, f \varphi_{q-1}, \varphi) = l_{\sigma(\Delta)}(\varphi_1, \ldots, \varphi_{q-2}, \varphi)(f)\varphi_{q-1} + f \sigma(\Delta)(\varphi_1, \ldots, \varphi_{q-1}, \varphi)
\]
showing that
\[
\sigma (\Delta) (\varphi_1, \ldots, \varphi_{q-2}, f \varphi_{q-1}, -) = \varphi_{q-1} \otimes e + f \sigma (\Delta)(\varphi_1, \ldots, \varphi_{q-1}, -),
\]
where $e \in \Gamma (E)$ is the section implicitly defined by
\[
\langle \varphi, e \rangle = l_{\sigma(\Delta)}(\varphi_1, \ldots, \varphi_{q-2}, \varphi)(f)
\]
for all $\varphi \in \Gamma (E^\ast)$. We remark for future use that, in particular, 
\begin{equation}\label{eq:varphi_q-1,e}
\langle \varphi_{q-1}, e \rangle = l_{\sigma(\Delta)}(\varphi_1, \ldots, \varphi_{q-1})(f).
\end{equation}
Now
\begin{equation}\label{eq:3}
{\sigma (\Delta) (\varphi_1, \ldots, \varphi_{q-2}, f \varphi_{q-1}, -)} U 
 = (\varphi_{q-1} \otimes e)U + f \sigma (\Delta)(\varphi_1, \ldots, \varphi_{q-1}, -) U. 
\end{equation}
The endomorphism $\varphi_{q-1} \otimes e : E^\ast \to E^\ast$ act on $E$ via its dual, which is minus its transpose, hence
\begin{equation}\label{eq:1}
(\varphi_{q-1} \otimes e) U = - \operatorname{trace}(e \otimes \varphi_{q-1}) U = - \langle \varphi_{q-1}, e \rangle U = -l_{\sigma(\Delta)}(\varphi_1, \ldots, \varphi_{q-1})(f) U.
\end{equation}
where we used (\ref{eq:varphi_q-1,e}). 
Substituting (\ref{eq:1}) into (\ref{eq:3}), we find
\[
{\sigma (\Delta) (\varphi_1, \ldots, \varphi_{q-2}, f \varphi_{q-1}, -)} U = f{\sigma (\Delta)(\varphi_1, \ldots, \varphi_{q-1}, -)} U -l_{\sigma(\Delta)}(\varphi_1, \ldots, \varphi_{q-1})(f) U.
\]
We also have
\[
\Psi_\Delta (\varphi_1, \ldots, \varphi_{q-2}, f\varphi_{q-1}) U = f \Psi_\Delta (\varphi_1, \ldots, \varphi_{q-1}) U + l_{\sigma(\Delta)}(\varphi_1, \ldots, \varphi_{q-1})(f) U,
\]
and putting everything together we find
\[
\begin{aligned}
\Phi_\Delta (\varphi_1, \ldots, \varphi_{q-2}, f \varphi_{q-1}) U & = f {\sigma (\Delta)(\varphi_1, \ldots, \varphi_{q-1}, -)} U + f \Psi_\Delta (\varphi_1, \ldots, \varphi_{q-1}) U \\
& = f \Phi_\Delta (\varphi_1, \ldots, \varphi_{q-1}) U.
\end{aligned}
\]
We conclude that $\Phi_\Delta$ is a vector bundle map $S^{q-1} E^\ast \to DL$ as desired. Additionally, the composition $l \circ \Phi_\Delta$ does clearly agree with $\sigma (\Delta)$ so that $(\sigma (\Delta), \Phi_\Delta)$ is indeed a $q$-$L_E$-multivector. It is also clear that $\Psi_\Delta$ can be reconstructed from $(\sigma (\Delta), \Phi_\Delta)$ showing that the correspondence $\Delta \mapsto (\sigma (\Delta), \Phi_\Delta)$ is injective. Next we prove the $DO_{\mathrm{core}}(E)$-linearity. So, take $F \in DO_{p, \mathrm{core}}(E) = \Gamma (S^p E) = C^\infty (E^\ast)_p$, and $\Delta \in DO_{q,\mathrm{lin}}(E)$, so that $F \circ \Delta \in DO_{p + q,\mathrm{lin}}(E)$. We want to show that $(\sigma(F \circ \Delta), \Phi_{F \circ \Delta}) = F \cdot (\sigma (\Delta), \Phi_\Delta)$. To do this we begin noticing that the product $F \cdot (\sigma (\Delta), \Phi_\Delta)$ is the pair $(D', \Phi')$ where $D' \in \mathfrak X^{p+q}_{\mathrm{sym}, \mathrm{lin}}(E)$ is the symmetric multivector that, when interpreted as a multiderivation $D' : \Gamma (E^\ast) \times \cdots \times \Gamma (E^\ast) \to \Gamma (E^\ast)$, is given by
\[
\begin{aligned}
D' (\varphi_1, \ldots, \varphi_{p+q}) & = F\cdot \sigma(\Delta)(\varphi_1, \ldots, \varphi_{p+q}) \\
& = \sum_{\sigma \in S_{p,q}} \left\langle F, \varphi_{\sigma(1)} \odot \cdots \odot \varphi_{\sigma(p)} \right\rangle \sigma(\Delta) (\varphi_{\sigma (p+1)}, \ldots, \varphi_{\sigma(p+q)}),
\end{aligned}
\]
and, similarly, $\Phi ' : S^{p+q-1} E^\ast \to DL$ is the bundle map given by
\[
\begin{aligned}
\Phi' (\varphi_1, \ldots, \varphi_{p+q-1}) & = F \cdot \Phi_\Delta (\varphi_1, \ldots, \varphi_{p+q-1})\\
&  = \sum_{\sigma \in S_{p,q-1}} \left\langle F, \varphi_{\sigma(1)} \odot \cdots \odot \varphi_{\sigma(p)} \right\rangle \Phi_\Delta (\varphi_{\sigma (p+1)}, \ldots, \varphi_{\sigma(p+q-1)})
\end{aligned}
\]
for all $\varphi_i \in \Gamma (E^\ast)$. From the properties of the symbol map, we have $\sigma (F \circ \Delta) = F \cdot \sigma (\Delta)$ and it remains to take care of $\Phi_{F \circ \Delta}$. So choose $\varphi_i \in \Gamma (E^\ast)$, and compute $\Phi_{F \circ \Delta}(\varphi_1, \ldots, \varphi_{p+q-1})$. From symmetry, it is enough to choose $\varphi_i = \varphi$ for all $i$ and some $\varphi$. First of all, we have
\[
\begin{aligned}
\Psi_{F \circ \Delta}(\, \underset{\text{\tiny{$p+q-1$ times}}}{\underbrace{\varphi\, ,\,  \ldots\, ,\,  \varphi}}\, ) & = [ \cdots [ F \circ \Delta, \underset{\text{\tiny{$p+q-1$ times}}}{\underbrace{\varphi], \cdots, \varphi]}}(1) \\
& = \sum_{l+m = p+q-1} \frac{1}{l! m!} [ \cdots [ F, \underset{\text{\tiny{$l$ times}}}{\underbrace{\varphi], \cdots, \varphi]}} \circ [ \cdots [ \Delta, \underset{\text{\tiny{$m$ times}}}{\underbrace{\varphi], \cdots, \varphi]}} (1)
\end{aligned}.
\]
Only the terms with $l = p, p-1$ (hence $m = q-1, q$, respectively) survive, and we get
\begin{align}
\Psi_{F \circ \Delta}(\, \underset{\text{\tiny{$p+q-1$ times}}}{\underbrace{\varphi\, ,\,  \ldots\, ,\,  \varphi}}\, ) &
 = \frac{1}{p! (q-1)!} [ \cdots [ F, \underset{\text{\tiny{$p$ times}}}{\underbrace{\varphi], \cdots, \varphi]}} \circ [ \cdots [ \Delta, \underset{\text{\tiny{$q-1$ times}}}{\underbrace{\varphi], \cdots, \varphi]}} (1) \nonumber \\
& \quad + \frac{1}{(p-1)! q!} [ \cdots [ F, \underset{\text{\tiny{$p-1$ times}}}{\underbrace{\varphi], \cdots, \varphi]}} \circ [ \cdots [ \Delta, \underset{\text{\tiny{$q$ times}}}{\underbrace{\varphi], \cdots, \varphi]}} (1) \nonumber \\
& = \frac{1}{p! (q-1)!} \Big\langle F, \underset{\text{\tiny{$p$ times}}}{\underbrace{\varphi \odot \cdots \odot \varphi}} \Big\rangle \Psi_\Delta \big(\underset{\text{\tiny{$q-1$ times}}}{\underbrace{\varphi, \ldots, \varphi}}\big) \nonumber \\
& \quad + \frac{1}{(p-1)! q!} \Big\langle F, \underset{\text{\tiny{$p-1$ times}}}{\underbrace{\varphi \odot \cdots \odot \varphi}} \odot \sigma(\Delta) \big(\underset{\text{\tiny{$q$ times}}}{\underbrace{\varphi, \ldots, \varphi}}\big)\Big\rangle. \label{eq:Psi_FDelta}
\end{align}
Now, for all $U \in \Gamma (L)$
\[
\Phi_{F \circ \Delta}(\, \underset{\text{\tiny{$p+q-1$ times}}}{\underbrace{\varphi\, ,\,  \ldots\, ,\,  \varphi}}\, ) U
 = {\sigma(F \circ \Delta) (\, \underset{\text{\tiny{$p+q-1$ times}}}{\underbrace{\varphi\, ,\,  \ldots\, ,\,  \varphi}}\, , -)}U + \Psi_{F \circ \Delta}(\, \underset{\text{\tiny{$p+q-1$ times}}}{\underbrace{\varphi\, ,\,  \ldots\, ,\,  \varphi}}\, )U .
\]
We already computed the second summand, while the first summand is
\begin{align}
{\sigma(F \circ \Delta) (\, \underset{\text{\tiny{$p+q-1$ times}}}{\underbrace{\varphi\, ,\,  \ldots\, ,\,  \varphi}}\, , -)}U & = \frac{1}{p! (q-1)!}{\big\langle F, \underset{\text{\tiny{$p$ times}}}{\underbrace{\varphi \odot \cdots \odot \varphi}} \big\rangle \sigma(\Delta) (\, \underset{\text{\tiny{$q-1$ times}}}{\underbrace{\varphi\, ,\,  \ldots\, ,\,  \varphi}}\, , -)} U\nonumber \\
& \quad + \frac{1}{(p-1)! q!}{\big\langle F, \underset{\text{\tiny{$p-1$ times}}}{\underbrace{\varphi \odot \cdots \odot \varphi}}\, \odot\, - \big\rangle \sigma(\Delta) (\, \underset{\text{\tiny{$q$ times}}}{\underbrace{\varphi ,  \ldots,  \varphi}})}U \nonumber\\
& = \frac{1}{p! (q-1)!}\big\langle F, \underset{\text{\tiny{$p$ times}}}{\underbrace{\varphi \odot \cdots \odot \varphi}} \big\rangle {\sigma(\Delta) (\, \underset{\text{\tiny{$q-1$ times}}}{\underbrace{\varphi\, ,\,  \ldots\, ,\,  \varphi}}\, , -)}U \nonumber \\
& \quad - \frac{1}{(p-1)! q!}\big\langle F, \underset{\text{\tiny{$p-1$ times}}}{\underbrace{\varphi \odot \cdots \odot \varphi}} \odot \sigma(\Delta) (\, \underset{\text{\tiny{$q$ times}}}{\underbrace{\varphi ,  \ldots,  \varphi}}) \big\rangle U.\label{eq:L_FDelta}
\end{align}
From (\ref{eq:Psi_FDelta}) and (\ref{eq:L_FDelta}) it easily follows that $\Phi_{F \circ \Delta} = F \cdot \Phi_\Delta$ as claimed.

The surjectivity of the map $\Delta \mapsto (\sigma(\Delta), \Phi_\Delta)$ now follows from (local) dimension counting.

It remains to check that the isomorhism $A: DO_{\mathrm{lin}}(E) \to \mathfrak D_{\mathrm{sym}, \mathrm{lin}}(L_E)$ defined in this way is both anchor and bracket preserving. For the anchor, the anchor of $(\sigma (\Delta), \Phi_\Delta)$ is the derivation of $DO_{\mathrm{core}}(E) = \Gamma (S^\bullet E) = C^\infty_{\mathrm{poly}}(E)$ corresponding to the linear multivector $\sigma (\Delta)$, which is exactly $ad(\Delta)$.

 For the bracket, as we already discussed $C^\infty_{\mathrm{poly}}(E)$-linearity and compatibility with the anchor, it is enough to discuss the brackets of generators. As already remarked, $DO_{\mathrm{lin}}(E)$ is generated (over $DO_{\mathrm{core}}(E)$) by $1$, $C^\infty_{\mathrm{lin}}(E)$ and $\mathfrak X_{\mathrm{lin}}(E)$. A direct check shows that
 \begin{equation}\label{eq:A_generators}
 A(1) = (0, 1), \quad A(\ell_{\varphi}) = (- \varphi^\uparrow, 0), \quad A(X) = (X^\ast, D_X)
 \end{equation}
 for all $\varphi \in \Gamma (E^\ast)$ and all $X \in \mathfrak X_{\mathrm{lin}}(E)$. Here $X^\ast$ and $D_X$ are, respectively, the linear vector field on $E^\ast$, and the derivation of $L = \wedge^{\mathrm{top}}E$ corresponding to $X$. It is now easy to check that the brackets are preserved on these generators, and this concludes the proof.
\end{proof}

Composing with the isomorphism $\mathfrak D^\bullet_{\mathrm{sym}, \mathrm{lin}}(L_E) \cong \mathfrak D (L_{E^\ast})$ we get a (degree inverting) Lie-Rinehart algebra isomorphism 
\[
DO_{\mathrm{lin}} (E) \overset{\cong}{\longrightarrow} \mathfrak D (L_{E^\ast})
\]
that we denote by $A$ again.

\begin{remark}
Let $(x^i, u^\alpha)$ be vector bundle coordinates on $E$, and let $(x^i, u_\alpha)$ be dual coordinates on $E^\ast$. Denote by $\mathrm{Vol}_u = u_1 \wedge \cdots \wedge u_{\mathrm{top}}$ the local coordinate generator of $\Gamma(L)$. It is easy to check using, e.g., (\ref{eq:ad_Delta}), (\ref{eq:A_generators}), and the $C^\infty_{\mathrm{poly}}(E^\ast)$-linearity, that, if the operator $\Delta \in DO_{\mathrm{lin}}(E)$ is locally given by (\ref{eq:loc_DO_stab}), then the corresponding derivation $A(\Delta) \in \mathfrak D (L_{E^\ast})$ maps a local section $\lambda = f(x,u) \mathrm{Vol}_u$ of $\Gamma (L_{E^\ast})$ to
\[
\begin{aligned}
& A(\Delta)(\lambda) \\
& =  \left( \sum_{|A| = q -1} \Delta^{i|A}(x) u_A\frac{\partial f}{\partial x^i}(x,u) - \sum_{|B| = q} \Delta^B_\alpha (x)u_B \frac{\partial f}{\partial u_\alpha} (x,u) + \sum_{|C| = q-1 }\Delta^C(x)u_C f(x,u) \right) \mathrm{Vol}_u.
\end{aligned}
\]
\end{remark}

\section{Linearization of Differential Operators}\label{sec:linear}

Let $\mathcal E$ be a manifold, let $M \subseteq \mathcal E$ be a submanifold and let $\Delta \in DO (\mathbb R_{\mathcal E})$ be a scalar DO. Denote by $E \to M$ the normal bundle to $M$, i.e.~$E = T\mathcal E|_M /TM$. In this section we show that, under appropriate \emph{linearizability conditions}, the DO $\Delta$ can be \emph{linearized} around $M$ yielding a FWL differential operator $\Delta_{\mathrm{lin}} \in DO_{\mathrm{lin}}(E)$. The DO $\Delta_{\mathrm{lin}}$ represents the first order approximation of $\Delta$ around $M$ in the direction transverse to $M$. This \emph{linearization construction} is a further motivation supporting our definition of FWL DOs.


So let $M \subseteq \mathcal E$ be a submanifold and let $E \to M$ be its normal bundle. We will often consider \emph{adapted coordinates} on $\mathcal E$ around points of $M$, i.e.~coordinates $(X^i, U^\alpha)$ such that $M : \left\{ U^\alpha = 0 \right.$. In particular, the restrictions $(x^i = X^i|_M)$ are coordinates on $M$. From $U^\alpha|_M = 0$ we see that $u^\alpha := dU^\alpha |_M$ are conormal $1$-forms and $(x^i, u^\alpha)$ are vector bundle coordinates on $E$. 

We want to explain what does it mean to \emph{linearize} an order $q$ DO operator $\Delta \in DO_q (\mathbb R_{\mathcal E})$ around $M$. We proceed as follows: 1) first, we recall the linearization of a function, 2) second, we discuss the linearization of a symmetric multivector, and, finally 3) we define the linearization of a generic DO. So, let $F \in C^\infty (\mathcal E)$. We say that $F$ is \emph{linearizable} (around $M$) if $F|_M = 0$. In this case, $dF|_M$ is a conormal $1$-form to $M$, i.e.~a section of the conormal bundle $E^\ast \hookrightarrow T^\ast \mathcal E|_M$. Hence it corresponds to a FWL function on $E$. We put $F_{\mathrm{lin}} = \ell_{dF|_M}$ and call it the \emph{linearization} of $F$. For instance, if $(X^i, U^\alpha)$ are adapted coordinates on $\mathcal E$, then the $(U^\alpha)$ are linearizable and the linear fiber coordinates $(u^\alpha)$ on $E$ are their linearizations. If $F$ is any linearizable function on $\mathcal E$, then locally, around a point of $M$, $F(X,U) = F_\alpha(X) U^\alpha + \mathcal O(U^2)$, for some functions $F_\alpha (X)$ of the $(X^i)$ (given by $F_\alpha (X) = \frac{\partial F}{\partial U^\alpha} (X,0)$), and, in this case, $F_{\mathrm{lin}} = F_\alpha(x)u^\alpha$. Notice that every linear function $\varphi \in C^\infty_{\mathrm{lin}}(E)$ is the linearization of a (non-unique) linearizable function $F \in C^\infty (\mathcal E)$: $\varphi = F_{\mathrm{lin}}$.

We now pass to symmetric multivectors. So, let $P$ be a symmetric $q$-multivector on $\mathcal E$. We say that $P$ is \emph{linearizable} if it belongs to the ideal $\mathcal I_M$ in $\mathfrak X^\bullet_{\mathrm{sym}}(\mathcal E)$ spanned by vector fields that are tangent to $M$. In other words $M$ is a \emph{coisotropic submanifold} of $\mathcal E$ with respect to $P$. This notion of linearizable multivector agrees with that of \emph{FWLizable tensor} described in \cite[Definition 5.4]{PSV2020}.

\begin{proposition}\label{prop:lin_multiv}
Let $P \in \mathfrak X^q_{\mathrm{sym}}(\mathcal E)$ be a linearizable symmetric multivector on $\mathcal E$. Then, there exists a unique FWL symmetric $q$-multivector $P_{\mathrm{lin}}$ on $E$ such that
\begin{equation}\label{eq:lin_multiv}
P_{\mathrm{lin}} ((F_1)_{\mathrm{lin}}, \ldots, (F_q)_{\mathrm{lin}}) = P(F_1, \ldots, F_q)_{\mathrm{lin}}
\end{equation}
for all linearizable functions $F_i \in C^\infty (\mathcal E)$. The \emph{linearization} $P \mapsto P_{\mathrm{lin}}$ preserves the Poisson bracket of symmetric multivectors.
\end{proposition}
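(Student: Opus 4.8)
The plan is to construct $P_{\mathrm{lin}}$ as the FWL symmetric $q$-multivector corresponding, under the bijection of Section~\ref{sec:multivectors}, to an explicit $q$-multiderivation $D$ of $E^\ast$, and then to read off both the defining identity (\ref{eq:lin_multiv}) and the Poisson compatibility from the way $D$ is built. Write $\mathcal J \subset C^\infty(\mathcal E)$ for the ideal of functions vanishing on $M$, so that the linearizable functions are exactly $\mathcal J$ and, since $\mathcal J/\mathcal J^2 \cong \Gamma(E^\ast)$, the linearization $F \mapsto F_{\mathrm{lin}} = \ell_{dF|_M}$ depends only on the class of $F$ modulo $\mathcal J^2$. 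Given $\varphi_1,\dots,\varphi_q \in \Gamma(E^\ast)$ I pick linearizable $F_i$ with $(F_i)_{\mathrm{lin}} = \ell_{\varphi_i}$ and set
\[
\ell_{D(\varphi_1,\dots,\varphi_q)} := P(F_1,\dots,F_q)_{\mathrm{lin}}.
\]

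The crux is an order-counting lemma: if $P \in \mathcal I_M$ is a symmetric $q$-multivector and $F_i \in \mathcal J^{a_i}$, then $P(F_1,\dots,F_q) \in \mathcal J^{\,a_1+\cdots+a_q-q+1}$. I would prove it by $\mathbb R$-linearity and the fact that $\mathcal I_M$ is spanned by products $Y \odot Q$ with $Y$ a vector field tangent to $M$: expanding $P(F_1,\dots,F_q)$ into terms $Y(F_i)\,Q(F_1,\dots,\widehat{F_i},\dots,F_q)$ and using that tangency gives $Y(\mathcal J^a)\subseteq \mathcal J^a$ while an arbitrary vector field lowers $\mathcal J$-order by at most one, each term has the asserted order. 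The two instances I need are: all $a_i=1$, giving $P(F_1,\dots,F_q)\in\mathcal J$ (so the right-hand side above is linearizable and $D$ makes sense); and one $a_i=2$ with the rest equal to $1$, giving $P(F_1,\dots,F_q)\in\mathcal J^2$, i.e. a trivial linearization. The second instance shows the value is unchanged when the $F_i$ are altered within their classes modulo $\mathcal J^2$, so $D$ is well defined; symmetry and $\mathbb R$-multilinearity are inherited from $P$.

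To see that $D$ is a genuine $q$-multiderivation I use that a symmetric multivector is a derivation in its last argument. Choosing $\tilde f \in C^\infty(\mathcal E)$ with $\tilde f|_M = f$, the product $\tilde f F_q$ linearizes $f\varphi_q$, and
\[
P(F_1,\dots,F_{q-1},\tilde f F_q) = \tilde f\,P(F_1,\dots,F_q) + F_q\,P(F_1,\dots,F_{q-1},\tilde f).
\]
Linearizing, and using $F_q|_M=0$, the first summand yields $f\,D(\varphi_1,\dots,\varphi_q)$ while the second yields $l(\varphi_1,\dots,\varphi_{q-1})(f)\,\varphi_q$, where I set $l(\varphi_1,\dots,\varphi_{q-1})(f):=P(F_1,\dots,F_{q-1},\tilde f)|_M$. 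Another application of the order-counting lemma shows $l$ is independent of the choices of $\tilde f$ and of the $F_i$; the last-slot derivation property of $P$ shows $l(\varphi_1,\dots,\varphi_{q-1})$ is a vector field on $M$, and $C^\infty(M)$-linearity and symmetry in the $\varphi_i$ are immediate. Thus $(D,l)$ is a $q$-multiderivation of $E^\ast$, and the bijection of Section~\ref{sec:multivectors} produces the desired FWL symmetric $q$-multivector $P_{\mathrm{lin}}$, which satisfies (\ref{eq:lin_multiv}) by construction. Uniqueness is automatic: a FWL symmetric multivector is determined by its associated multiderivation $D$, and $D$ is forced by (\ref{eq:lin_multiv}).

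For the Poisson statement I first check that $\mathcal I_M$ is closed under the bracket (\ref{eq:Poisson_multiv}): that bracket is a biderivation for the symmetric product and vector fields tangent to $M$ are closed under the commutator, whence $\{\mathcal I_M,\mathcal I_M\}\subseteq\mathcal I_M$ and $\{P,Q\}$ is again linearizable, so $\{P,Q\}_{\mathrm{lin}}$ is defined. Since a FWL multivector is determined by its values on tuples of linear functions, it suffices to compare $\{P,Q\}_{\mathrm{lin}}$ and $\{P_{\mathrm{lin}},Q_{\mathrm{lin}}\}$ on $(\ell_{\varphi_1},\dots)$. This is a naturality computation: every term of (\ref{eq:Poisson_multiv}) for $\{P,Q\}(F_1,\dots)$ has the shape $P(Q(F_{\sigma(1)},\dots),F_{\dots},\dots)$; by the order-counting lemma the inner function $Q(F_{\sigma(1)},\dots)$ is linearizable, so applying (\ref{eq:lin_multiv}) first for $Q$ and then for $P$ identifies the linearization of each such term with the corresponding term $P_{\mathrm{lin}}(Q_{\mathrm{lin}}(\ell_{\varphi_{\sigma(1)}},\dots),\ell_{\dots},\dots)$ of (\ref{eq:Poisson_multiv}) for $\{P_{\mathrm{lin}},Q_{\mathrm{lin}}\}$. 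Summing the matched terms gives $\{P,Q\}(F_1,\dots)_{\mathrm{lin}} = \{P_{\mathrm{lin}},Q_{\mathrm{lin}}\}(\ell_{\varphi_1},\dots)$, and the left-hand side equals $\{P,Q\}_{\mathrm{lin}}(\ell_{\varphi_1},\dots)$ by (\ref{eq:lin_multiv}). The main obstacle throughout is the order-counting lemma, and especially its second-order refinement, which is precisely what makes the linearization well defined; once it is available, the multiderivation structure and the Poisson compatibility are formal consequences of the defining relation (\ref{eq:lin_multiv}).
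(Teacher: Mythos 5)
Your proof is correct, but it follows a genuinely different route from the one in the paper. The paper's argument is a computation in adapted coordinates $(X^i,U^\alpha)$: it writes $P$ locally, uses that $P\in\mathcal I_M$ forces the purely conormal coefficients $P^{\alpha_1\cdots\alpha_q}$ to vanish along $M$, checks by inspection that $P(F_1,\ldots,F_q)_{\mathrm{lin}}$ depends only on the data $(F_i)_{\alpha}(x)$, and then simply exhibits the local expression of $P_{\mathrm{lin}}$ as a FWL multivector; the Poisson compatibility is left as an easy consequence of (\ref{eq:lin_multiv}). You instead replace the coordinate computation by the $\mathcal J$-adic order lemma ($P\in\mathcal I_M$ and $F_i\in\mathcal J^{a_i}$ imply $P(F_1,\ldots,F_q)\in\mathcal J^{\sum a_i-q+1}$), which is correct as stated --- the key points being $Y(\mathcal J^a)\subseteq\mathcal J^a$ for $Y$ tangent and the loss of one order per remaining slot --- and you assemble $P_{\mathrm{lin}}$ through the multiderivation bijection of Section \ref{sec:multivectors} rather than through a local formula. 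What your approach buys is a coordinate-free explanation of \emph{why} the construction is well defined (linearization factors through $\mathcal J/\mathcal J^2$, and the lemma with one $a_i=2$ lands you in $\mathcal J^2$), it produces the symbol $l_{P_{\mathrm{lin}}}$ explicitly as $P(F_1,\ldots,F_{q-1},\tilde f)|_M$, and it makes the Poisson statement a formal naturality argument term by term in (\ref{eq:Poisson_multiv}); what the paper's computation buys is the explicit local expression of $P_{\mathrm{lin}}$, which is then reused in Section \ref{sec:linear}. Two small points you gloss over but which do go through: the order lemma must also be invoked with one argument of order $a_i=0$ (to see that $l$ is independent of the extension $\tilde f$ and of the $F_j$), and the identification of $\{P,Q\}_{\mathrm{lin}}$ with $\{P_{\mathrm{lin}},Q_{\mathrm{lin}}\}$ uses that both are FWL and hence determined by their values on tuples of linear functions, every one of which is a linearization.
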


\begin{proof}
We begin remarking that, as $P \in \mathcal I_M$, the function $P(F_1, \ldots, F_q)$ is clearly linearizable for any choice of linearizable functions $F_i$. Now we want to show that the rhs of (\ref{eq:lin_multiv}) does only depend on $(F_i)_{\mathrm{lin}}$. We do this in coordinates. So, let $(X^i, U^\alpha)$ be adapted coordinates on $\mathcal E$, and let $(x^i, u^\alpha)$ be the associated vector bundle coordinates on $E$. Locally
\begin{equation}\label{eq:P}
P = \sum_{l+m = q}P^{i_1\cdots i_l,\alpha_1 \cdots \alpha_m}(X,U) \frac{\partial}{\partial X^{i_1}} \odot \cdots \odot \frac{\partial}{\partial X^{i_l}} \odot \frac{\partial}{\partial U^{\alpha_1}} \odot \cdots \odot \frac{\partial}{\partial U^{\alpha_m}}.  
\end{equation}
Hence
\[
P(F_1, \ldots, F_q) =  \sum_{l+m = q} \sum_{\sigma \in S_{l,m}}P^{i_1\cdots i_l,\alpha_1 \cdots \alpha_m}(X,U) \frac{\partial F_{\sigma(1)}}{\partial X^{i_1}} \cdots \frac{\partial F_{\sigma(l)}}{\partial X^{i_l}} \frac{\partial F_{\sigma(l+1)}}{\partial U^{\alpha_1}} \cdots \frac{\partial F_{\sigma(l+m)}}{\partial U^{\alpha_m}}. 
\]
It follows from $P \in \mathcal I_M$ that $P^{\alpha_1, \ldots, \alpha_q}(X,0) = 0$. Now compute
\[
P(F_1, \ldots, F_q)_{\mathrm{lin}} = \left(\frac{\partial}{\partial U^\alpha}|_{(x,0)}P(F_1, \ldots, F_q)\right)u^\alpha.
\]
Using that $F_i|_M = 0$, and that $P^{\alpha_1, \ldots, \alpha_q}(X,0) = 0$, we find
\[
\begin{aligned}
\frac{\partial}{\partial U^\alpha}|_{(x,0)}P(F_1, \ldots, F_q) & =  \frac{\partial P^{\alpha_1 \cdots \alpha_q}}{\partial U^\alpha}(x,0) (F_1)_{\alpha_1} (x) \cdots (F_q)_{\alpha_q} (x) \\
& \quad + \sum_{\sigma \in S_{1, k-1}} P^{i,\alpha_1 \cdots \alpha_{q-1}}(x,0) \frac{\partial (F_{\sigma(1)})_\alpha}{\partial x^{i}} (x,0) (F_{\sigma(2)})_{\alpha_1} \cdots (F_{\sigma(q)})_{\alpha_{q-1}}
\end{aligned}
\]
which does only depend on the $(F_i)_{\mathrm{lin}}$. As every FWL function is a linearization, this also shows that $P_{\mathrm{lin}}$ is well-defined on linear functions. Finally, $P_{\mathrm{lin}}$ can be uniquely extended to all functions on $E$, as a symmetric $q$-multivector, also denoted by $P_{\mathrm{lin}}$, and locally given by 
\[
P_{\mathrm{lin}} = P^{\alpha_1 \cdots \alpha_q}_{\mathrm{lin}}(x, u) \frac{\partial}{\partial u^{\alpha_1}} \odot \cdots \odot \frac{\partial}{\partial u^{\alpha_q}} 
+ P^{i,\alpha_1 \cdots \alpha_{q-1}}(x,0) \frac{\partial}{\partial x^{i}} \odot \frac{\partial}{\partial u^{\alpha_1}} \odot \cdots \odot \frac{\partial}{\partial u^{\alpha_{q-1}}}.
\]
In particular $P_{\mathrm{lin}}$ is a linear multivector.

For the last part of the statement, first notice that the ideal $\mathcal I_M$ is preserved by the Poisson bracket, so, if $P,Q \in \mathcal I_M$, then it makes sense to linearize $\{ P, Q\}$. The rest follows easily from Equation (\ref{eq:lin_multiv}).
\end{proof}

\begin{remark}
Proposition \ref{prop:lin_multiv} is a ``symmetric multivector analogue'' of the following well-known fact. Let $(\mathcal P, \pi)$ be a Poisson manifold, let $(T^\ast \mathcal P)_\pi$ be its cotangent algebroid and let $M \subseteq \mathcal P$ be a coisotropic submanifold. Then the conormal bundle $N^\ast M \subseteq T^\ast \mathcal P$ is a subalgebroid $(T^\ast \mathcal P)_\pi$, hence the normal bundle $NM$ is equipped with a linear Poisson structure $\pi_{\mathrm{lin}}$ (see, e.g., \cite[Section 3]{W1988}).
\end{remark}

By definition, the multivector $P_{\mathrm{lin}}$ in Proposition \ref{prop:lin_multiv} is the \emph{linearization} of $P$. 

We finally come to a generic differential operator $\Delta \in DO_q (\mathbb R_{\mathcal E})$.

\begin{definition}
An order $q$ DO $\Delta \in DO (\mathbb R_{\mathcal E})$ is \emph{order $q$ linearizable} (around $M$) if its symbol $\sigma (\Delta) \in \mathfrak X^q_{\mathrm{sym}}(\mathcal E)$ is linearizable.
\end{definition}

\begin{remark} \quad
\begin{itemize}
\item An order $q-1$ DO $\Delta \in DO (\mathbb R_{\mathcal E})$ is always order $q$ linearizable, but it might not be order $q-1$ linearizable.
\item A function $F \in C^\infty (\mathcal E) = DO_0 (\mathbb R_{\mathcal E})$ is order $0$ linearizable if and only if $F|_M = 0$, i.e.~$F$ is a linearizable function.
\item A vector field $X \in \mathfrak X (\mathcal E) \subset DO_1 (\mathbb R_{\mathcal E})$ is order $1$ linearizable if and only if it is tangent to $M$, i.e.~it is a linearizable vector field.
\end{itemize}
\end{remark}

Now, let $\Delta \in DO_q (\mathbb R_{\mathcal E})$ be an order $q$ linearizable DO. In order to \emph{linearize} it, we use Theorem \ref{theor:iso_DO_D_sym}. In other words, from $\Delta$ we cook up a $q$-$L_E$-multivector $(P_{\mathrm{lin}}, \Phi)$ of $L_{E^\ast} = E^\ast \times_M L$, with $L = \wedge^\mathrm{top} E$ as in Section \ref{sec:FWL_DO}. The construction is inspired by the proof of Theorem \ref{theor:iso_DO_D_sym} itself. We let $P_{\mathrm{lin}} \in \mathfrak X^q_{\mathrm{sym}, \mathrm{lin}}(E)$ be the linearization of the symbol $P = \sigma (\Delta)$ of $\Delta$. It remains to define the vector bundle map
\[
\Phi : S^{q-1} E^\ast \to DL.
\]
For $\varphi_1, \ldots, \varphi_{q-1} \in \Gamma (E^\ast)$, we define $\Phi (\varphi_1, \ldots, \varphi_{q-1})$ as follows. Pick linearizable functions $F_i \in C^\infty (\mathcal E)$ such that $(F_i)_{\mathrm{lin}} = \ell_{\varphi_i}$ and consider the function on $M$
\[
f_{F_1, \ldots, F_{q-1}} := [[\cdots[\Delta, F_1], \cdots], F_{q-1}](1)|_M.
\]
We put
\[
\Phi (\varphi_1, \ldots, \varphi_{k-1}) (U)= {P_{\mathrm{lin}}(\varphi_1, \ldots, \varphi_{q-1}, -)} U + f_{F_1, \ldots, F_{k-1}} U,
\]
for all $U \in \Gamma (L)$. In the rhs, we interpret $P_{\mathrm{lin}}$ as a multiderivation of $E^\ast$. We have to show that $\Phi$ is well defined, i.e.~$f_{F_1, \ldots, F_{q-1}}$ does only depend on the $\varphi_i$, and $\Phi$ is indeed a vector bundle map $\Phi : S^{q-1} E^\ast \to DL$ as desired, i.e.~it is (symmetric and) $C^\infty (M)$-linear in its arguments. To see that $f_{F_1, \ldots, F_{q-1}}$ does not depend on the choice of the $F_i$, assume that $(F_{q-1})_{\mathrm{lin}} = 0$. Hence we have $F_{q-1}|_M = 0$, and $dF_{q-1}|_M = 0$. Additionally,
\[
\begin{aligned}
f_{F_1, \ldots, F_{q-1}} & = [[\cdots[\Delta, F_1], \cdots], F_{q-1}](1)|_M 
\\
& = \left(F_{q-1}[[\cdots[\Delta, F_1], \cdots], F_{q-2}](1) - [[\cdots[\Delta, F_1], \cdots], F_{q-2}](F_{q-1})\right)|_M \\
& = - [[\cdots[\Delta, F_1], \cdots], F_{q-2}](F_{q-1})|_M
\end{aligned}
\] 
Now, $[[\cdots[\Delta, F_1], \cdots], F_{q-2}]$ is a second order DO. As the first jet of $F_{q-1}$ vanishes on $M$, the expression $ [[\cdots[\Delta, F_1], \cdots], F_{q-2}](F_{q-1})|_M$ does only depend on the symbol of $[[\cdots[\Delta, F_1], \cdots], F_{q-2}]$ (and the Hessian of $F_{q-1}$). But
\[
\sigma \left( [[\cdots[\Delta, F_1], \cdots], F_{q-2}] \right) = \sigma (\Delta) (F_1, \ldots, F_{q-2}, -,-),
\]
so, if $P = \sigma (\Delta)$ is locally given by (\ref{eq:P}), using that $P^{\alpha_1 \cdots \alpha_q}(X,0) = 0$, we find that, locally,
\[
f_{F_1, \ldots, F_{q-1}}  \propto P^{i,\alpha_1 \cdots \alpha_{q-2}\alpha_{q-1}}(x,0)  (F_1)_{\alpha_1}(x) \cdots (F_{q-2})_{\alpha_{q-2}}(x) \frac{\partial }{\partial x^{i} } (F_{q-1})_{\alpha_{q-1}}(x) = 0. 
\]
We conclude that $\Phi (\varphi_1, \ldots, \varphi_{q-1})$ is well-defined. It remains to see that $\Phi$ is $C^\infty (M)$-linear in one, hence in all, of its arguments. So, take $f \in C^\infty (M)$. The simple formula:
\begin{equation}\label{eq:FG_lin}
(FG)_{\mathrm{lin}} = F|_M G_{\mathrm{lin}}, \quad \text{for all $F \in C^\infty (\mathcal E)$ and all linearizable $G \in C^\infty (\mathcal E)$},
\end{equation}
shows that, when we replace $\varphi_{q-1}$ with $f \varphi_{q-1}$ in $\Phi(\varphi_1, \ldots, \varphi_{q-1})$, we can replace $F_{q-1}$ with $F F_{q-1}$, where $F\in C^\infty (\mathcal F)$ is any function such that $F|_M = f$. So we have
\[
 \Phi (\varphi_1, \ldots, \varphi_{q-2}, f \varphi_{q-1}) U = {P_{\mathrm{lin}}(\varphi_1, \ldots, \varphi_{q-2}, f \varphi_{q-1}, -)}U + f_{F_1, \ldots, F_{q-2}, F F_{q-1}}U. 
\]
For the first summand, recall from the proof of Theorem \ref{theor:iso_DO_D_sym} (Equations (\ref{eq:3}), (\ref{eq:1})) that
\[
{P_{\mathrm{lin}}(\varphi_1, \ldots, \varphi_{q-2}, f \varphi_{q-1}, -)}U = f {P_{\mathrm{lin}}(\varphi_1, \ldots, \varphi_{q-2}, \varphi_{q-1}, -)}U - l_{P_{\mathrm{lin}}} (\varphi_1, \ldots, \varphi_{q-1})(f) U
\]
For the second summand, compute
\[
\begin{aligned}
f_{F_1, \ldots, F_{q-2}, F F_{q-1}}  & = [[\cdots[\Delta, F_1], \cdots, F_{q-2}], F F_{q-1}](1)|_M \\
& = F [[[\cdots[\Delta, F_1], \cdots], F_{q-2}], F_{q-1}](1)|_M + [[[\cdots[\Delta, F_1], \cdots], F_{q-2}], f](F_{q-1})|_M \\
& = f f_{F_1, \ldots, F_{q-1}} + [[[[\cdots[\Delta, F_1], \cdots], F_{q-2}], F],F_{q-1}](1)|_M \\
& =  F f_{F_1, \ldots, F_{q-1}} + \sigma (\Delta) (F_1, \ldots, F_{q-1}, F)|_M.
\end{aligned}
\]
Now, to conclude the proof of the $C^\infty (M)$-multilinearity, it is enough to show that
\begin{equation}\label{eq:sigma|_M_l}
\sigma (\Delta) (F_1, \ldots, F_{q-1}, F)|_M = l_{P_{\mathrm{lin}}} (\varphi_1, \ldots, \varphi_{q-1})(f).
\end{equation}
But this follows easily from the fact that $P_{\mathrm{lin}}$ is the linearization of $\sigma (\Delta)$ and formula (\ref{eq:FG_lin}) again.

Finally, by construction, $l \circ \Phi = l_{P_{\mathrm{lin}}}$, so the pair $(P_{\mathrm{lin}}, \Phi)$ corresponds to a $q$-$L_E$-multivector. We call the linear DO $A^{-1}(P_{\mathrm{lin}}, \Phi)$ the \emph{linearization} of $\Delta$ and denote it $ \Delta_{\mathrm{lin}}$.

The above discussion leads to the following
\begin{theorem}\label{theor:linear}
The assignment $\Delta \mapsto \Delta_{\mathrm{lin}}$ is a well-defined (\emph{linearization}) map from order $q$ linearizable DOs on $\mathcal E$ and order $q$ FWL DOs on $E$. The linearization preserves the commutator of DOs in the following sense: let $\Delta \in DO_{q+1} (\mathcal E)$ and $\Delta' \in DO_{q'+1}(\mathcal E)$ be an order $q + 1$ linearizable and an order $q' +1$ linearizable DO, respectively. Then 1) $[\Delta, \Delta']$ is order $q+q'+1$ linearizable and 2) its linearization is $[\Delta_{\mathrm{lin}}, \Delta{}'_{\mathrm{lin}}]$.
\end{theorem}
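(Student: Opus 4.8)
The plan is to deduce both assertions from the Lie--Rinehart isomorphism $A$ of Theorem \ref{theor:iso_DO_D_sym}, reducing everything to an equality of FWL symmetric $L_E$-multivectors which is then split into its symbol part and its $\Phi$-part. To begin with part (1), I would recall that the symbol intertwines the commutator of scalar DOs with the Poisson bracket of symmetric multivectors (Section \ref{sec:DO}), so that $\sigma([\Delta,\Delta'])$ equals, up to the fixed sign convention, $\{\sigma(\Delta),\sigma(\Delta')\}$. By hypothesis $\sigma(\Delta),\sigma(\Delta')\in\mathcal I_M$, and $\mathcal I_M$ is closed under the Poisson bracket (this coisotropy is exactly what is used in Proposition \ref{prop:lin_multiv}); hence $\sigma([\Delta,\Delta'])\in\mathcal I_M$, i.e. $[\Delta,\Delta']$ is order $q+q'+1$ linearizable. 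Thus $[\Delta,\Delta']_{\mathrm{lin}}$ is a well-defined element of $DO_{q+q'+1,\mathrm{lin}}(E)$, and, since $DO_{\mathrm{lin}}(E)$ is a Lie subalgebra (Lemma \ref{lem:stabilizer}), so is $[\Delta_{\mathrm{lin}},\Delta'_{\mathrm{lin}}]$, of the same order; this proves (1) and places both sides of (2) in the same space.

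For (2), since $A$ is a Lie algebra isomorphism we have $[\Delta_{\mathrm{lin}},\Delta'_{\mathrm{lin}}]=A^{-1}\{A(\Delta_{\mathrm{lin}}),A(\Delta'_{\mathrm{lin}})\}$, so it suffices to prove
\[
A\big([\Delta,\Delta']_{\mathrm{lin}}\big)=\big\{A(\Delta_{\mathrm{lin}}),A(\Delta'_{\mathrm{lin}})\big\}
\]
in $\mathfrak D_{\mathrm{sym},\mathrm{lin}}(L_E)$. Both sides are pairs (symbol, $\Phi$), and I would match them componentwise. For the symbol ($L$-)component I use that linearization commutes with passing to the symbol, $\sigma(\Theta_{\mathrm{lin}})=(\sigma\Theta)_{\mathrm{lin}}$ (immediate from the construction $\Theta_{\mathrm{lin}}=A^{-1}(P_{\mathrm{lin}},\Phi)$), that $L$ is a Poisson-algebra map (Lemma \ref{lem:Poisson_D}), and that the multivector linearization of Proposition \ref{prop:lin_multiv} preserves the Poisson bracket. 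Chaining these gives $(\sigma[\Delta,\Delta'])_{\mathrm{lin}}=\{\sigma(\Delta_{\mathrm{lin}}),\sigma(\Delta'_{\mathrm{lin}})\}$, which is precisely the $L$-component of the right-hand side (the signs being automatically consistent, as the symbol--commutator relation and $A$ use the same bracket). Consequently the difference $[\Delta,\Delta']_{\mathrm{lin}}-[\Delta_{\mathrm{lin}},\Delta'_{\mathrm{lin}}]$ has vanishing symbol and so is a core DO in $DO_{q+q',\mathrm{core}}(E)$.

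It then remains to match the $\Phi$-components, equivalently---since a FWL DO is determined by its symbol together with the data $\Psi$---to prove the scalar identity
\[
[\cdots[[\Delta,\Delta'],F_1],\cdots,F_{q+q'}](1)\big|_M=[\cdots[[\Delta_{\mathrm{lin}},\Delta'_{\mathrm{lin}}],\ell_{\varphi_1}],\cdots,\ell_{\varphi_{q+q'}}](1),
\]
for any linearizable $F_i$ with $(F_i)_{\mathrm{lin}}=\ell_{\varphi_i}$: the left side is $\Psi_{[\Delta,\Delta']_{\mathrm{lin}}}$ by the very definition of the linearization, the right side is $\Psi_{[\Delta_{\mathrm{lin}},\Delta'_{\mathrm{lin}}]}$. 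I would expand each nested commutator by repeated use of the Jacobi identity $[[\Theta,\Theta'],f]=[[\Theta,f],\Theta']+[\Theta,[\Theta',f]]$, sorting terms by how the functions $F_i$ (resp. $\ell_{\varphi_i}$) are distributed between the two operators; the resulting $(k_1,k_2)$-unshuffle sums are exactly those of the Gerstenhaber-type bracket \eqref{eq:Poisson_V-multivectors} of $L_E$-multivectors. Invoking $\Psi_{\Delta_{\mathrm{lin}}}=f^{\Delta}$, $\Psi_{\Delta'_{\mathrm{lin}}}=f^{\Delta'}$ and the symbol relations then identifies the two expansions term by term.

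The delicate point, and the main obstacle, is precisely this last matching. One must (a) correctly produce the second, ``symbol-differentiating'' sum of the $\bullet$-product in Lemma \ref{lem:Poisson_D}, which arises from the terms where a function acts on the anchor $l_{\sigma(\Delta)}$---exactly the mechanism behind the $C^\infty(M)$-linearity computation for $\Phi$ in \eqref{eq:Phi_Delta}---and (b) check that evaluation at $1$ and restriction to $M$, together with $F_i|_M=0$, annihilate precisely the terms with no counterpart on the FWL side, reconciling all signs and unshuffle conventions with those fixing $A$. A cleaner but less self-contained alternative would be to identify $\Delta_{\mathrm{lin}}$ with the scaling limit $\lim_{s\to0}s^{\,q}\,h_s^\ast\Delta$ in a tubular neighbourhood of $M$: since $h_s^\ast$ is an algebra automorphism it commutes with the bracket, whence $s^{\,q+q'}h_s^\ast[\Delta,\Delta']=[s^{\,q}h_s^\ast\Delta,\,s^{\,q'}h_s^\ast\Delta']$, and (2) would follow from uniqueness of limits once the limit is shown to exist---which is again the linearizability established in part (1).
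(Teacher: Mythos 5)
Your proposal follows essentially the same route as the paper: part (1) via closure of $\mathcal I_M$ under the Poisson bracket, and part (2) by transporting everything through the isomorphism $A$ of Theorem \ref{theor:iso_DO_D_sym}, matching the symbol components via Proposition \ref{prop:lin_multiv} and then reducing to the $\Psi$-(equivalently $f$-)component identity, which the paper likewise establishes by expanding the nested commutators of $[\Delta,\Delta']$ with the $F_i$, isolating the surviving unshuffle terms, and comparing them with the Gerstenhaber-type bracket \eqref{eq:Poisson_V-multivectors}. The ``delicate point'' you flag is exactly the computation the paper carries out (its $T_{q-1}, T_q, T_{q+1}$ analysis), so your plan is correct and coincides in substance with the published argument.
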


\begin{proof}
We only need to prove the last part of the statement. To do that, recall that the commutator $[\Delta, \Delta']$ is a DO of order $q + q' + 1$ and its symbol is the Poisson bracket $\{ \sigma(\Delta), \sigma (\Delta') \}$. Let $\Delta_{\mathrm{lin}} = A^{-1}(P_{\mathrm{lin}}, \Phi)$, and $\Delta'{}_{\mathrm{lin}} = A^{-1}(P'{}_{\mathrm{lin}}, \Phi')$ be their linearizations. As the ideal $\mathcal I_M$ in $\mathfrak X^\bullet (\mathcal E)$ is preserved by the Poisson bracket, then $[\Delta, \Delta']$ is order $q + q' + 1$ linearizable as desired. Let $[\Delta, \Delta']_{\mathrm{lin}} = A^{-1} (P''{}_{\mathrm{lin}}, \Phi'')$ be its linearization. As the linearization of multivectors preserves the Poisson bracket, then $P''{}_{\mathrm{lin}} = \left\{P_{\mathrm{lin}}, P'{}_{\mathrm{lin}}\right\}$. Finally, we have to take care of $\Phi''$. It is a straightforward computation that we sketch to point out the possible subtleties related to the properties of the various objects involved. Recall from the discussion preceding the statement of the theorem that
\[
\Phi (\varphi_1, \ldots, \varphi_{q}) U = {P_{\mathrm{lin}} (\varphi_1, \ldots, \varphi_{q})}U + f_{F_1, \ldots, F_{q}}U
\]
where 
\[
f_{F_1, \ldots, F_{q}} = [\cdots [\Delta, F_1], \cdots, F_q](1)|_M,
\]
and likewise for $\Phi', \Phi''$. Here $F_i$ is any linearizable function on $\mathcal E$ such that $(F_i)_{\mathrm{lin}} = \varphi_i$. In the following, to stress that, actually, the function $ f_{F_1, \ldots, F_{q}}$ does only depend on the $\varphi_i$, we write $f_{\varphi_1, \ldots, \varphi_q}$ (instead of $f_{F_1, \ldots, F_{q}}$). Additionally, we call $f$ (resp.~$f', f''$) the \emph{$f$-component} of the FWL DO $\Delta_{\mathrm{lin}}$ (resp.~$\Delta'{}_{\mathrm{lin}}, [\Delta_{\mathrm{lin}}, \Delta'{}_{\mathrm{lin}}]$). We have to show that the $f$-component $f^{[\Delta, \Delta']}$ of $[\Delta, \Delta']_{\mathrm{lin}}$ agrees with $f''$. To do this, we compute $f^{[\Delta, \Delta']}$ explicitly. By symmetry and $\mathbb R$-multilinearity, it is enough to evaluate it on $q + q'$ equal sections $\varphi_1 = \cdots = \varphi_{q+q'} = \varphi \in \Gamma (E^\ast)$. So let $F \in C^\infty (\mathcal E)$ be a linearizable function such that $\varphi = F_{\mathrm{lin}}$. We have
\[
\begin{aligned}
f^{[\Delta, \Delta']}_{\underset{\text{$q+q'$ times}}{\underbrace{\varphi, \ldots, \varphi}}} & = [\cdots [[ \Delta, \Delta'], \underset{\text{$q + q'$ times}}{\underbrace{F], \cdots, F]}}(1)|_M \\
& = \sum_{i + j = q + q'} \frac{1}{i!j!}\bigg[[\cdots [ \Delta, \underset{\text{$i$ times}}{\underbrace{F], \cdots, F]}}, [\cdots [ \Delta', \underset{\text{$j$ times}}{\underbrace{F], \cdots, F]}} \bigg](1)|_M.
\end{aligned}
\]
The only terms that survive are those with $i = q-1, q, q+1 $ (hence $j = q' +1, q', q'-1$ respectively). We call them $T_{q-1}, T_q, T_{q+1}$ respectively. The first one is given by
\[
\begin{aligned}
T_{q-1} & =  \frac{1}{(q-1)!(q'+1)!}\bigg[[\cdots [ \Delta, \underset{\text{$q-1$ times}}{\underbrace{F], \cdots, F]}}, [\cdots [ \Delta', \underset{\text{$q' + 1$ times}}{\underbrace{F], \cdots, F]}} \bigg](1)|_M \\
& = \frac{1}{(q-1)!(q'+1)!} f_{\underset{\text{$q - 1$ times}}{\underbrace{\varphi, \ldots, \varphi}}, P'{}_{\mathrm{lin}}(\underset{\text{$q' + 1$ times}}{\underbrace{\varphi, \ldots, \varphi}})}
\end{aligned}.
\]
Similarly the third one is
\[
T_{q+1} = -\frac{1}{(q+1)!(q'-1)!} f'_{\underset{\text{$q' - 1$ times}}{\underbrace{\varphi, \ldots, \varphi}}, P_{\mathrm{lin}}(\underset{\text{$q + 1$ times}}{\underbrace{\varphi, \ldots, \varphi}})}.
\]
To compute $T_q$, we use a simple trick: for every two scalar DOs $\square, \square'$ we have
\[
[\square, \square'](1) = [\square, \square'(1)](1) - [\square', \square(1)](1).
\]
After using this formula we get
\[
\begin{aligned}
T_{q} & =  \frac{1}{q!q'!}\bigg[[\cdots [ \Delta, \underset{\text{$q$ times}}{\underbrace{F], \cdots, F]}}, [\cdots [ \Delta', \underset{\text{$q'$ times}}{\underbrace{F], \cdots, F]}} \bigg](1)|_M \\
& =  \frac{1}{q!q'!}\sigma(\Delta)\bigg(\underset{\text{$q$ times}}{\underbrace{F, \cdots, F}}, [\cdots [ \Delta', \underset{\text{$q'$ times}}{\underbrace{F], \cdots, F]}} (1)\bigg)|_M \\
& \quad - \frac{1}{q!q'!}\sigma(\Delta')\bigg(\underset{\text{$q'$ times}}{\underbrace{F, \cdots, F}}, [\cdots [ \Delta, \underset{\text{$q$ times}}{\underbrace{F], \cdots, F]}} (1)\bigg)|_M .
\end{aligned}
\]
Now we use that both $\Delta$ and $F$ are linearizable to replace $[\cdots [ \Delta', F], \cdots, F](1)$ with its restriction to $M$ in the first summand (likewise for $\Delta$ in the second summand). We get
\[
\begin{aligned}
T_{q} 
& =  \frac{1}{q!q'!}\sigma(\Delta)\bigg(\underset{\text{$q$ times}}{\underbrace{F, \cdots, F}}, f'_{\underset{\text{$q'$ times}}{\underbrace{\varphi, \ldots, \varphi}}}\bigg)|_M - \frac{1}{q!q'!}\sigma(\Delta')\bigg(\underset{\text{$q'$ times}}{\underbrace{F, \cdots, F}}, f_{\underset{\text{$q$ times}}{\underbrace{\varphi, \ldots, \varphi}}}\bigg)|_M \\
& =  \frac{1}{q!q'!}l_{P_{\mathrm{lin}}}\bigg(\underset{\text{$q$ times}}{\underbrace{\varphi, \cdots, \varphi}}\bigg)\big(f'_{\underset{\text{$q'$ times}}{\underbrace{\varphi, \ldots, \varphi}}}\big) - \frac{1}{q!q'!}l_{P'{}_{\mathrm{lin}}}\bigg(\underset{\text{$q'$ times}}{\underbrace{\varphi, \cdots, \varphi}}\bigg)\big(f_{\underset{\text{$q$ times}}{\underbrace{\varphi, \ldots, \varphi}}}\big) \end{aligned}
\]
where we also used (\ref{eq:sigma|_M_l}). At this point, it can be checked directly, explointing the explicit formula (\ref{eq:Poisson_V-multivectors}) for the Poisson bracket of $L_E$-multivectors, that $T_{q-1} + T_q + T_{q+1}$ agrees with the $f$-component $f''$ of $[\Delta_{\mathrm{lin}}, \Delta'{}_{\mathrm{lin}}]$. We conclude that $[\Delta_{\mathrm{lin}}, \Delta'{}_{\mathrm{lin}}] = [\Delta, \Delta']_{\mathrm{lin}}$ as desired.
 \end{proof}
 
\begin{remark}
Let $\mathcal E$ be the total space of a vector bundle $E \to M$ and interpret $M$ as a submanifold in $\mathcal E = E$ via the zero section. In this situation, the normal bundle $NM$ to $M$ identifies canonically with $E$ itself. Clearly, an order $q$ FWL DO $\Delta \in DO_{q, \mathrm{lin}}(E)$ is automatically order $q$ linearizable and it easily follows from the proof of Theorem \ref{theor:linear} that the vector bundle isomorphism $E \cong NM$ identifies $\Delta$ with its own linearization $\Delta_{\mathrm{lin}}$.
 \end{remark}

 \textbf{Acknowledgements.} LV is member of the GNSAGA of INdAM. 

\end{document}